\newcommand\bulitem[1]{\item{\bfseries #1 }} 
\newcommand\indentdisplays[1]{%
     \everydisplay{\addtolength\displayindent{#1}%
     \addtolength\displaywidth{-#1}}}  
\newcommand\numberthis{\addtocounter{equation}{1}\tag{\theequation}} 
\renewenvironment{proof}[1][\proofname ]{{\noindent \bfseries #1. }}{\qed \bigskip } 
\newcommand{\R}{{\mathbb R}}
\newcommand{\Z}{{\mathbb Z}}
\newcommand{\N}{{\mathbb N}}
\newcommand{\Q}{{\mathbb Q}}
\newcommand{\cR}{{\mathcal R}}
\newcommand{\OO}{{\mathrm O}}
\newcommand{\Ss}{{\mathbb S}}
\newcommand{\e}{\varepsilon}
\newcommand{\yy}{{ \widetilde y}}
\newcommand{\zz}{{ \widetilde z}}
\newcommand{\Ex}{{\mathrm{exp}}}
\newtheorem{theorem}{Theorem}[section]
\newtheorem{cor}[theorem]{Corollary}
\newtheorem{definition}{Definition}[section]
\newtheorem{lem}[theorem]{Lemma}
\newtheorem{prop}[theorem]{Proposition}
\newtheorem{remark}[theorem]{Remark}
\numberwithin{equation}{section}
\title[Slow convergence phenomenon]{Slow convergence in periodic homogenization problems for divergence type elliptic operators}
\author{Hayk Aleksanyan}
\keywords{Boundary layers, periodic homogenization, slow convergence, Dirichlet problem, ellipticity, halfspace, asymptotics, Diophantine direction, Gaussian curvature}
\address{School of Mathematics, The University of Edinburgh, JCMB The King's Buildings, Peter Guthrie Tait Road, Edinburgh EH9 3FD}
\curraddr{Department of Mathematics, KTH Royal Institute of Technology, SE-100 44  Stockholm,
Sweden}
\email{hayk.aleksanyan@gmail.com}
\begin{document}

  \begin{abstract}

    We introduce a new constructive method for establishing lower bounds on
    convergence rates of periodic homogenization problems associated with divergence type elliptic operators.
    The construction is applied in two settings. 
    First, we show that solutions to boundary layer problems 
    for divergence type elliptic equations set in halfspaces and with infinitely smooth data, may converge to
    their corresponding boundary layer tails as slow as one wish depending on the position of
    the hyperplane. Second, we construct a Dirichlet problem for divergence type elliptic operators set in a bounded domain,
	and with all data being $C^\infty$-smooth,
	for which the boundary value homogenization holds with arbitrarily slow speed.

\end{abstract}

\maketitle

\section{Introduction}
The focus of the paper, as the title suggests, is on quantitative theory
of periodic homogenization of divergence type elliptic operators.
Lately, there has been much interest around effective estimates on convergence rates
for homogenization problems associated with linear elliptic operators in divergence form, see for example
\cite{GM-JEMS}-\cite{GM-Acta}, \cite{ASS1}-\cite{ASS3},
\cite{Kenig-Lin-Shen-L2}, \cite{Kenig-Lin-Shen-CPAM}, \cite{Ar-Shen}.
A particular common feature of these papers, is that they all establish 
upper bounds on the speed of convergence of homogenization, in other words
these papers, among other results, measure how fast the homogenization holds. 
However, results showing limitations of the speed of the process,
i.e. estimating to which extent homogenization may decelerate,
given that the homogenization takes place of course,
seem to be extremely scarce in the literature. A few instances
of this type of set-up around the divergence setting
which we are aware of are the following. It is shown in \cite{ASS2}-\cite{ASS3},
that the Dirichlet boundary value homogenization in $L^p$ can not be faster than a certain algebraic rate
depending on $1\leq p<\infty$ and on the geometry of the domain
(see \cite[Theorem~1.6]{ASS2} and \cite[Theorem 1.3]{ASS3}).
The next one studied in \cite{Prange}
is related to boundary layer problems set in halfspaces, and shows that 
depending on the position of the halfspace, convergence of the solution to its boundary layer tail
can be slower than any algebraic rate (see \cite[Theorem 1.3]{Prange}).
Finally, in \cite{BBMM} it is proved an existence of one-dimensional
examples in almost periodic homogenization with fixed boundary and source terms
and oscillating coefficients,
where homogenization of solutions in $L^2$ is not faster than a polynomial rate.

Here we will be interested in developing tools that will address how slow the convergence can actually be.
We start with the discussion of the first problem considered in this paper, and 
will introduce part of the key ideas in that setting.

\vspace{0.1cm}

For a scalar $a\in \R$ and a unit vector $n\in \R^d$
consider the following Dirichlet problem
\begin{equation}\label{bdry-layer-system-recall}
\begin{cases} -\nabla \cdot A(y) \nabla v(y) =0 ,&\text{  $y \in \Omega_{n,a}:=\{y\in \R^d: \ y\cdot n >a \}$}, \\
 v(y)=v_0(y) ,&\text{  $ y \in \partial \Omega_{n,a}  $.}    \end{cases}
\end{equation}
The main assumptions concerning \eqref{bdry-layer-system-recall} which will be in force throughout are:
\begin{itemize}[label=\textbullet] 
 \bulitem{(Periodicity)} The coefficients $A$ and boundary data $v_0$ are $\Z^d$-periodic, that is
 for any $y\in \R^d$ and any $h\in \Z^d$
 $$
 A(y+h)=A(y) \qquad \text{ and } \qquad v_0(y+h) = v_0(y),
 $$
 \bulitem{(Ellipticity)} There exists a constant $\lambda>0$ such that for any $x \in \R^d$
 and any $y\in \R^d$ one has
 $$
 \lambda |x|^2 \leq x^T A(y) x \leq \lambda^{-1} |x|^2.
 $$
 \bulitem{(Regularity)}
 All elements of $A$ and boundary data $v_0$ are infinitely smooth.
\end{itemize}

We will refer to (\ref{bdry-layer-system-recall}) as \emph{boundary layer} problem.
These type of problems emerge in the theory of periodic homogenization of Dirichlet problem
for divergence type elliptic operators with periodically oscillating coefficients and boundary data.
Understanding the well-posedness of problems of the form (\ref{bdry-layer-system-recall}) in a suitable class of solutions,
and more importantly the asymptotics of solutions far away from the boundaries
are one of the key steps toward obtaining quantitative results for homogenization of the mentioned class of Dirichlet problems.
For detailed discussions concerning the role of (\ref{bdry-layer-system-recall})
we refer the reader to \cite{GM-JEMS}, \cite{GM-Acta} and \cite{Prange}.
Below we will briefly review some known results concerning
boundary layer problems.
Interestingly, the asymptotic analysis of (\ref{bdry-layer-system-recall}) depends on certain
number-theoretic properties of the normal vector $n$.

\vspace{0.2cm}

\textbf{Rational directions.} We say that $n\in \R^d$ is a \emph{rational} vector,
and write $n\in \R \Q^d$, if $n$ is a scalar multiple
of a vector with all components being rational numbers. 
One can easily see that if $n\in \R^d$ has length one, then $n$ is rational iff $n=\xi/|\xi| $ for some non-zero $\xi\in \Z^d$.
In this case it is well-known (see e.g. \cite[Lemma 4.4]{AA99})
that there exists a smooth variational solution $v$ to (\ref{bdry-layer-system-recall}), which is unique given some decay conditions on the gradient,
and such that there is a constant $v^{a,\infty}$ for which $v(y) \to v^{a,\infty}$ exponentially fast as $y\cdot n \to \infty$,
where the convergence is uniform with respect to tangential directions.
Although having these nice convergence properties, the drawback of rational directions is that
the constant $v^{a,\infty}$ may actually depend on $a$, i.e. translating the hyperplane in the direction of $n$ may lead to different limits
at infinity.

\vspace{0.2cm}
\textbf{Diophantine directions.}
Following \cite{GM-Acta} for a unit vector $n\in \Ss^{d-1}$ set
$P_{n^\bot} $ to be the operator of orthogonal projection on the hyperplane orthogonal to $n$.
Fix $l>0$ so that $(d-1)l>1$ and for $\kappa>0$ define
\begin{equation}\label{class-diophantine}
\mathcal{A}_\kappa=\big\{ n\in \Ss^{d-1}: | P_{n^\bot}(\xi) | \geq \kappa |\xi|^{-l} \text{ for all } \xi \in \Z^d \setminus \{0\}   \big\}.
\end{equation}
A vector $n\in \Ss^{d-1}$ is called \emph{Diophantine} if it is from $\mathcal{A}_\kappa$ for some $\kappa>0$.
Clearly elements of $\mathcal{A}_\kappa$ are non-rational directions.
Also, it is not hard to verify (see \cite[Section 2]{GM-Acta}) that $\sigma(\mathbb{S}^{d-1} \setminus \mathcal{A}_\kappa)\leq C\kappa^{d-1}$,
where $\sigma$ is the surface measure of the unit sphere, and $C$ is a constant depending on $l$. Thus, the last inequality shows that almost all directions are Diophantine.

Behaviour of (\ref{bdry-layer-system-recall}) in the case when $n$ is Diophantine
has been studied only recently in \cite{GM-JEMS}, where it was proved (Propositions 4 and 5 of \cite{GM-JEMS})
that there exists a smooth variational solution $v$ to (\ref{bdry-layer-system-recall}) which
is unique, given some growth conditions, and such that for some constant $v^\infty$ one has
$v(y) \to v^\infty$ as $y \cdot n \to \infty$. Here convergence is locally uniform with respect to tangential
directions, and is faster than any polynomial rate in $y\cdot n$. Moreover,
the effective constant $v^\infty$ (the \emph{boundary layer tail})
depends on direction $n$ only, and is independent of $a$ in contrast to the rational case.

\vspace{0.2cm}
\textbf{Non-rational directions in general.} Here we consider directions which are \emph{irrational}, i.e.
from the complement of $\R \Q^d$. Observe that not all irrational directions are Diophantine,
therefore the previous two cases do not cover $\mathbb{S}^{d-1}$, the set of all possible directions. 
In a recent work \cite{Prange}, it was proved that \eqref{bdry-layer-system-recall} has a unique smooth variational solution 
satisfying certain growth conditions, for which one has convergence toward its boundary layer tail
far away from the boundaries (see Theorem \ref{Thm-Prange} for the precise statement).
However, the result of \cite{Prange} does not provide any estimates on the rate of convergence
given this generality on the normals.
It does however show that for irrational directions which are non-Diophantine (meaning they fail
to satisfy \eqref{class-diophantine} for any choice of parameters $\kappa $ and $l$ involved in the definition), one may have convergence
slower than any power rate in $y\cdot n$. More precisely, 
for a smooth and $\Z^2$-periodic function $v_0: \mathbb{T}^2 \to \R$ consider the following boundary value problem
\begin{equation}\label{u-on-Omega-n}
\Delta v =0 \ \text{in } \Omega_n \qquad \text{ and } \qquad v= v_0 \ \text{on } \partial \Omega_n,
\end{equation}
where $\Omega_n = \{x\in \R^2: \ x\cdot n>0 \}$. 
Clearly, this problem is of type \eqref{bdry-layer-system-recall} with the matrix of coefficients equal
to $2\times 2$-identity matrix. Then \cite[Theorem 1.3]{Prange} shows that if $n\notin \R \Q^2$ is
arbitrary non-Diophantine direction, then for any $p>0$ and any $R>0$ there exists a smooth function $v_0: \mathbb{T}^2 \to \R$ and 
a sequence $\lambda_k \nearrow \infty$ such that if $v$ solves (\ref{u-on-Omega-n}) with boundary data $v_0$, then
for each $k=1,2,...$ and all $y' \in \partial \Omega_{n} \cap B(0, R)$ one has
\begin{equation}\label{slow-conv-Prange}
| v(y' + \lambda_k n) - v^\infty | \geq \lambda_k^{-p},
\end{equation}
where the constant $v^\infty$ is the corresponding boundary layer tail. Let us note that 
the left-hand side of (\ref{slow-conv-Prange}) converges to zero as $k\to \infty$, since as we have just said, for irrational directions
solutions converge to their boundary layer tails.
The proof of (\ref{slow-conv-Prange}) constructs $v_0$ with Fourier spectrum supported in a subset of
$\Z^2$ on which the normal $n$ fails to satisfy the Diophantine condition.
Then choosing coefficients having an appropriate decay, combined with the special structure
of the spectrum of $v_0$, immediately leads to the conclusion.
We stress that this slow convergence result of \cite{Prange} works for any irrational non-Diophantine direction,
however, it leaves out the question whether one can go beyond algebraic rates of convergence,
and perhaps more intriguing, the case of Laplace operator does not give an insight into the case of variable coefficient operators,
since in the Laplacian setting one has an explicit form of solutions which essentially determines the analysis.

\vspace{0.3cm}

\noindent Throughout the paper we use the following notation and conventions.
\vspace{0.3cm}

\begin{tabbing}
\hspace{1.7cm}\=\kill
 \vspace{0.1cm}
 $\mathbb{T}^d$ \> unit torus of $\R^d$, i.e. the quotient space $\R^d/ \Z^d$, where $\Z^d$ is the integer lattice, \\
 \vspace{0.1cm}
 $\mathbb{S}^{d-1}$ \> unit sphere of $\R^d$, \\
 \vspace{0.1cm}
 $\OO(d)$ \> group of $d\times d$ orthogonal matrices, \\
 \vspace{0.1cm}
 $M^T$ \> transpose of a matrix $M$ \\
 \vspace{0.1cm}
 $x\cdot y$ \> inner product of $x,y\in \R^d$ \\
  \vspace{0.1cm}
  $|x|$ \> Euclidean length of $x\in \R^d$ \\
  \vspace{0.1cm}
 $\Omega_{n,a}$  \> halfspace  $\{ x\in \R^d: \ x\cdot n>a \}$, where $n\in \mathbb{S}^{d-1}$ and $a\in \R$, \\
 \vspace{0.1cm}
 $\Omega_n$ \> halfspace $\Omega_{n,0}$, \\
 \vspace{0.1cm}
 $B_r(x)$ \> or $B(x,r)$ both stand for an open ball with center at $x\in \R^d$ and radius $r>0$,  \\ 
 \vspace{0.1cm}
  $\Subset$ \> compact inclusion for sets.
\end{tabbing} 

\vspace{0.3cm}
Positive generic constants are denoted by various letters $C, C_1,c,...$,
and if not specified, may vary from formula to formula.
For two quantities $x,y$ we write $x\lesssim y$ for the inequality $x\leq C y$
with an absolute constant $C$, and $x\asymp y$ for double inequality $C_1  x \leq y \leq C_2 x$
with absolute constants $C_1,C_2$. 

Throughout the text the word ``smooth"
unless otherwise specified, means differentiable of class $C^\infty$.
The term ``modulus of continuity'' is everywhere understood in accordance with Definition \ref{def-mod-contin}.
The phrase ``boundary layer tail" refers to the constant determined by Theorem \ref{Thm-Prange}.
\emph{Domain} is an open and connected subset of Euclidean space.
We also adopt the summation convention of repeated indices (Einstein summation convention).

\subsection{Main results}

The first class of problems we will study in this article,
is motivated by the results discussed above, and the importance of boundary layer problems in periodic homogenization
of Dirichlet problem. Most notably, we will show that in the case
of irrational non-Diophantine normals convergence of solutions to (\ref{bdry-layer-system-recall})
towards their boundary layer tails can be arbitrarily slow.
Next, in the second part of the paper, we will apply our methods developed for the analysis of 
(\ref{bdry-layer-system-recall}) combined with some ideas from our papers \cite{ASS1}-\cite{ASS3}
written in collaboration with H. Shahgholian, and P. Sj\"{o}lin, to construct a Dirichlet problem
for elliptic operators in divergence form set in a bounded domain, where boundary value homogenization
holds with a speed slower than any given rate in advance.

We now proceed to formulations of the main results.
In order to measure a speed of convergence we consider the following class of functions.

\begin{definition}\label{def-mod-contin}
We say that a function $\omega$ is a \verb|modulus of continuity| if it has the following properties:
\begin{itemize}[label=\textbullet] 
\bulitem{} $\omega :[0,\infty) \to (0,\infty)$ is continuous,
\vspace{0.1cm}
\bulitem{} $\omega$ is one-to-one, decreasing, and $\lim\limits_{t \to \infty} \omega(t) =0$.
\end{itemize}
\end{definition}

We will at places abuse the notation and instead of $[0,\infty)$ may take $[c_0, \infty)$ for some $c_0>0$
as a domain of definition for modulus of continuity.

For our first result we will impose a structural restriction on coefficients $A$.
Namely we assume that
\begin{equation}\label{cond-on-A}
 \text{ there exists } \ \  1\leq \gamma \leq d \ \ \text{ such that } \ \ \partial_{y_\alpha} A^{\gamma \alpha } \equiv 0.
\end{equation}
In other words we require one of the columns of $A$ to be divergence free
as a vector field. This assumption is technical and is due to our proof of Theorem \ref{Thm-slow-variable}.
It is used to control the contribution of boundary layer correctors in the asymptotics of boundary layer tails
(see, in particular, inequality (\ref{Z15})). 

The following is our main result concerning the slow convergence phenomenon of boundary layers.

\begin{theorem}\label{Thm-slow-variable}
Let $\omega$ be any modulus of continuity and let $R>0$ be fixed. Then, there exists a unit vector $n\notin \R\Q^d$,
a smooth function $v_0:\mathbb{T}^d \to \R$,
and a sequence of positive numbers $\{\lambda_k\}_{k=1}^\infty$ growing to infinity,
such that if $v$ solves (\ref{bdry-layer-system-recall}) under condition (\ref{cond-on-A})
on the operator, and with $n$ and $v_0$ as specified here,
then for any $k=1,2,...$ and any $y'\in \partial \Omega_{n,0} \cap B(0,R)$ one has
\begin{equation}\label{z1}
| v(y' + \lambda_k n ) - v^\infty | \geq \omega(\lambda_k), 
\end{equation}
where $v^\infty$ is the corresponding boundary layer tail.
\end{theorem}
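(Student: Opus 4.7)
The plan is to construct the direction $n$, the boundary data $v_0$, and the sequence $\{\lambda_k\}$ jointly by an inductive procedure that imitates the Laplace-case strategy of \cite{Prange} but replaces the explicit exponential solution by a quantitatively controlled asymptotic expansion for the variable-coefficient boundary layer.

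The first, and main analytic, step is to establish for each frequency $\xi \in \Z^d\setminus\{0\}$ an asymptotic representation of the model solution $v_\xi$ of (\ref{bdry-layer-system-recall}) with boundary data $y\mapsto e^{2\pi i\xi\cdot y}$. The expected structure is
$$
v_\xi(y) \;=\; \vp_\xi(y)\, e^{2\pi i P_{n^\bot}(\xi)\cdot y}\, e^{-\mu_\xi\,(y\cdot n)} \;+\; v_\xi^{\infty} \;+\; r_\xi(y),
$$
where $\vp_\xi$ is bounded and tangentially $\Z^d$-periodic, $\mathrm{Re}\,\mu_\xi\asymp |P_{n^\bot}(\xi)|$ by ellipticity, and $r_\xi$ is a lower-order remainder. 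From this one reads off a matching quantitative bound for the tail, namely $|v_\xi(y'+\lambda n)-v_\xi^{\infty}| \asymp e^{-c|P_{n^\bot}(\xi)|\lambda}$ on a sufficiently wide range of $\lambda$. Producing this representation, and in particular controlling the remainder $r_\xi$ and the corrector contributions to $v_\xi^{\infty}$ uniformly in $\xi$, is the genuine difficulty; this is exactly where the divergence-free column hypothesis (\ref{cond-on-A}) enters, through the estimate (\ref{Z15}) referred to in the introduction.

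With this model in hand, I would take the boundary data as a lacunary trigonometric series
$$
v_0(y) \;=\; \sum_{k=1}^{\infty} c_k\, e^{2\pi i\xi_k\cdot y},
$$
and select $\xi_k\in\Z^d$, $c_k>0$, $\lambda_k>0$, and a Cauchy sequence of directions $n^{(k)}\to n$ inductively. At stage $k$, with everything up to stage $k-1$ fixed, I would choose $\xi_k$ of very large modulus together with $n^{(k)}$ inside a tiny neighborhood of $n^{(k-1)}$ so that $|P_{(n^{(k)})^\bot}(\xi_k)|\lambda_k$ is absurdly small, while the previously arranged smallness of $|P_{(n^{(j)})^\bot}(\xi_j)|$ for $j<k$ is preserved by continuity. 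The coefficient $c_k$ would be set to a slightly inflated copy of $\omega(\lambda_k)$, and $|\xi_k|$ can always be chosen so small compared with $1/c_k$ that $v_0$ is $C^\infty$ on $\T^d$; the limit direction $n$ is then obtained from a nested intersection of shrinking neighborhoods and can be kept outside $\R\Q^d$ by a standard genericity argument.

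By linearity of (\ref{bdry-layer-system-recall}) we have $v=\sum_k c_k v_{\xi_k}$ and $v^{\infty}=\sum_k c_k v_{\xi_k}^{\infty}$. At the evaluation point $y'+\lambda_k n$ with $y'\in\partial\Omega_{n,0}\cap B(0,R)$, the model expansion yields
$$
v(y'+\lambda_k n) - v^{\infty} \;=\; c_k\,\vp_{\xi_k}(y'+\lambda_k n)\, e^{2\pi i P_{n^\bot}(\xi_k)\cdot y'}\, e^{-\mu_{\xi_k}\lambda_k} \;+\; E_k,
$$
where $E_k$ collects the essentially fully-decayed contributions from indices $j<k$ (small because of the rapid growth of $\lambda_k$ relative to $1/|P_{n^\bot}(\xi_j)|$) and the geometrically negligible contributions from indices $j>k$ (small because the $c_j$ decay faster than any power of $\omega(\lambda_k)$). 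Since $|P_{n^\bot}(\xi_k)|\lambda_k$ is arbitrarily small by construction, the exponential factor is bounded below, the oscillatory factor has unit modulus, and $|\vp_{\xi_k}|\gtrsim 1$, so the leading term has magnitude $\gtrsim c_k\asymp \omega(\lambda_k)$ uniformly in $y'\in\partial\Omega_{n,0}\cap B(0,R)$, producing (\ref{z1}). The main obstacle, to reiterate, is not this bookkeeping step but the first one: unlike in the Laplacian setting there is no closed-form solution for $v_\xi$, and one must obtain an expansion precise enough that the oscillatory/exponential leading term genuinely governs the rate at which $v_\xi(y'+\lambda n)$ approaches $v_\xi^{\infty}$, with a remainder uniformly tractable in $\xi$.
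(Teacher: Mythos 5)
Your proposal hinges entirely on the first step: a per--frequency expansion
$v_\xi(y)=\vp_\xi(y)\,e^{2\pi i P_{n^\bot}(\xi)\cdot y}e^{-\mu_\xi (y\cdot n)}+v_\xi^\infty+r_\xi(y)$
with $\mathrm{Re}\,\mu_\xi\asymp|P_{n^\bot}(\xi)|$ and a \emph{two-sided} bound
$|v_\xi(y'+\lambda n)-v_\xi^\infty|\asymp e^{-c|P_{n^\bot}(\xi)|\lambda}$. You correctly flag this as the genuine difficulty, but it is not merely difficult --- it is assumed rather than proved, and in the stated form it is very likely false for variable coefficients. The solution with boundary data $e^{2\pi i\xi\cdot y}$ is constructed through the lifted problem (\ref{bl-system-d+1}) on $\mathbb{T}^d\times[a,\infty)$, whose periodic coefficients $B(\theta+tn)$ couple \emph{all} tangential frequencies: even with single-mode data, the solution's tangential spectrum spreads over all of $\Z^d$, so the rate at which $v_\xi$ approaches its tail is governed by the small divisors $|N^T\zeta|$ over the whole lattice, not by $|P_{n^\bot}(\xi)|$ alone. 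No result of this modal type exists in \cite{GM-JEMS}, \cite{GM-Acta} or \cite{Prange}; what those papers provide for non-rational $n$ is qualitative convergence plus, in \cite[Section 6]{Prange}, an \emph{integral} representation of $v(y)$ against the Green's kernel of the homogenized operator with an $O(\lambda^{-\kappa})$ error (formula (\ref{bdry-layer-sol-repr1})). Two further points in your sketch are unsubstantiated: the uniform lower bound $|\vp_{\xi_k}|\gtrsim1$ on all of $\partial\Omega_{n,0}\cap B(0,R)$ (an amplitude of this kind can vanish), and the actual mechanism by which hypothesis (\ref{cond-on-A}) enters --- you name it but never use it.

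The paper takes a structurally different and workable route precisely to avoid your first step. It starts from Prange's representation (\ref{bdry-layer-sol-repr1})--(\ref{bdry-layer-sol-repr2}), so the quantity to bound from below is an oscillatory integral $\int_{\R^{d-1}}F_n(z')\,\Psi\bigl(\lambda M(z',0)\bigr)\,v_0(\lambda M(z',0))\,dz'$ against the fixed kernel $F_n=\partial_{2,d}G^{0,n}(e_d,(\cdot,0))$, up to an $O(\lambda^{-1/(4d)})$ error. Condition (\ref{cond-on-A}) is used to kill one cell corrector and make the bracket $\Psi_n$ bounded away from zero on an open set of directions (inequality (\ref{Z15})), after which $\Psi_n$ is absorbed into the boundary data by dividing, see (\ref{Z14}). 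The lower bound for the remaining integral is then the content of Lemma \ref{Lem-slow-conv-family-of-F}, applied uniformly to the family of Green-kernel functions (whose uniform $L^1$, tail and $C^1$ bounds are Lemma \ref{Lem-Green-props-for-slow}); that lemma in turn rests on the explicit construction of a very non-Diophantine direction in Lemma \ref{Lem-bad-normals}, with lacunary data supported on the bad frequencies and signs of the Fourier coefficients chosen to prevent cancellation between blocks. Your lacunary-data and nested-neighbourhood ideas are in the same spirit as Lemmas \ref{Lem-bad-normals} and \ref{Lem-slow-conv-family-of-F}, but without replacing the unproved modal expansion by the integral representation the argument cannot be completed.
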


\begin{remark}
Observe that $v^\infty$ being the boundary layer tail, implies that the left-hand side of (\ref{z1})
decays as $k\to \infty$, and hence the lower bound is non-trivial.
Next, notice that we have fixed the halfspace on the direction $n\in \mathbb{S}^{d-1}$ by setting $a=0$ in Theorem \ref{Thm-slow-variable}.
This does not lessen the generality, since the case of arbitrary $a\in \R$ can be recovered by a change of variables.
However, we do not know if in general one can take the sequence $\{\lambda_k\}$ and boundary data $v_0$ independently
of $a$. 
\end{remark}
 
Finally, let us note that the result of 
Theorem \ref{Thm-slow-variable} shows that there is \emph{no} lower bound for the speed of convergence on the set of irrational directions,
in other words convergence can be in fact arbitrarily slow.
This case is in strong contrast with the case of Diophantine normals, where one has convergence faster than any power rate.

\vspace{0.2cm}

Our next concern is the question of Dirichlet boundary value homogenization for 
divergence type elliptic operators in bounded domains. 
Assume we are given a coefficient matrix 
$A=(A^{\alpha \beta}(x))_{\alpha , \beta=1}^d: X \to \R^{d\times d}$,
defined on some domain $X\subset \R^d$ ($d\geq 2$) and having these properties:
\begin{itemize}
 \item[(A1)] for each $1\leq \alpha , \beta \leq d$ we have $A^{\alpha \beta} \in C^\infty(X)$,
\vspace{0.2cm}
 \item[(A2)] there exist constants $0<\lambda\leq \Lambda<\infty$ such that
$$
\lambda |\xi|^2 \leq A^{\alpha \beta} (x) \xi_\alpha \xi_\beta \leq \Lambda |\xi|^2,
 \qquad \forall x\in X, \ \forall  \xi \in \R^d.
$$
\end{itemize}
For a function $g\in C^\infty( \mathbb{T}^d)$,
and a bounded subdomain $D\subset X$ with $C^\infty$ boundary
consider the following problem
\begin{equation}\label{Dirichlet-bdd-domains}
 -\nabla \cdot A(x) \nabla u_\e(x) = 0 \text{ in } D \qquad \text{ and } \qquad u_\e(x) = g (x/ \e) \text{ on } \partial D,
\end{equation} 
where $\e>0$ is a small parameter. Along with \eqref{Dirichlet-bdd-domains} consider the corresponding homogenized problem
which reads
\begin{equation}\label{Dirichlet-bdd-domains-homo}
 -\nabla \cdot A(x) \nabla u_0(x) = 0 \text{ in } D \qquad \text{ and } \qquad u_0(x) = \int_{\mathbb{T}^d} g(y) dy \text{ on } \partial D.
\end{equation} 

Let us emphasize that we do \emph{not} impose any structural restrictions on $A$,
nor we assume that $A$ is necessarily periodic. This is in view of the fact
that there is no interior homogenization taking place in (\ref{Dirichlet-bdd-domains}).

\begin{theorem}\label{Thm-slow-Dirichlet}
Let $A$ be as above defined on $X$ and satisfy (A1)-(A2), and let
$\omega$ be any modulus of continuity.
Then, there exist bounded, non-empty convex domains $D\subset X$ and $D'\Subset D$ with $C^\infty$ boundaries,
and a real-valued function $g \in C^\infty(\mathbb{T}^d)$ such that if
$u_\e$ is the solution to \eqref{Dirichlet-bdd-domains} for $\e>0$, and $u_0$ to that of \eqref{Dirichlet-bdd-domains-homo},
then for some sequence of positive numbers $\{\e_k\}_{k=1}^\infty$ strictly decreasing to 0, 
one has the following:
\begin{itemize}
\item[{\normalfont a)}] $  |u_{\e_k}(x) - u_0(x)| \geq  \omega(1/ \e_k ), \qquad \forall x\in D', \ k=1,2,...,$
\vspace{0.1cm}
\item[{\normalfont b)}] $|u_\e(x) - u_0(x)|\to 0 \ \   \text{ as } \e \to 0 , \ \  \forall x\in D$.
\end{itemize}
\end{theorem}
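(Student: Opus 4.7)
The plan is to reduce Theorem \ref{Thm-slow-Dirichlet} to the halfspace result Theorem \ref{Thm-slow-variable} by localising at a single boundary point where the outward normal can be prescribed, and then transferring the halfspace slow decay to the bounded-domain setting via a standard boundary-layer comparison valid for smooth elliptic operators on smooth domains. The second ingredient is a Fourier-side construction of $g$ borrowed from the analysis in Section~\textsection on Theorem \ref{Thm-slow-variable} (together with ideas from \cite{ASS1}-\cite{ASS3}) that forces the effective boundary-value limit to coincide with the constant $u_0\equiv \bar g$.

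\textbf{Step 1 (freezing and applying Theorem \ref{Thm-slow-variable}).} Fix an interior reference point $x_0^*\in X$ and perform the linear change of variables $z=A(x_0^*)^{1/2}(x-x_0^*)$, which reduces the frozen operator $-\nabla\cdot A(x_0^*)\nabla$ to the Laplacian. A constant coefficient matrix trivially satisfies the structural hypothesis \eqref{cond-on-A}, so Theorem \ref{Thm-slow-variable} applies to a modulus $\tilde\omega$ chosen strictly slower than the prescribed $\omega$ (by a margin that will absorb the comparison errors below). This yields a non-Diophantine irrational direction $n$ in the $z$-coordinates, a smooth $\Z^d$-periodic datum $v_0:\T^d\to\R$, and a sequence $\lambda_k\nearrow\infty$ realising the halfspace lower bound. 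Inverting the linear change of coordinates produces the corresponding irrational direction $n_A$ in the original $x$-coordinates, with the same slow convergence up to fixed multiplicative constants.

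\textbf{Step 2 (constructing $D$, $D'$ and $g$).} Take $D\subset X$ to be a strictly convex $C^\infty$ domain (say a sufficiently large ball) containing $x_0^*$ in its interior and admitting a boundary point $x_0\in\partial D$ whose outward normal is $n_A$; take $D'\Subset D$ to be a small ball around $x_0^*$ through which a short segment from $x_0$ in direction $-n_A$ passes. Set $g:=v_0$, adjusted by an additive constant so that $\bar g=\int_{\T^d} g$ equals the boundary-layer tail $v^\infty$ from Step 1. Since $A$ and $\partial D$ are $C^\infty$, a standard boundary-layer correction argument (freezing $A$ at $x_0$, locally flattening $\partial D$, and estimating the resulting curvature and coefficient-variation commutators) produces a uniform bound
\[
|u_\e(x)-V((x-x_0)/\e)|\leq C\,\e^{\alpha},\qquad \alpha>0,
\]
in a neighbourhood of $x_0$ inside $D$, where $V$ is the halfspace boundary-layer solution of Step 1 (for the frozen operator on the halfspace $\{(x-x_0)\cdot n_A>0\}$ with data $v_0$).

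\textbf{Step 3 (finishing and the main obstacle).} Given $x\in D'$, pick $\e_k>0$ so that $(x-x_0)/\e_k=\lambda_k n+y'$ with $y'$ bounded on $\partial\Omega_{n,0}$; Theorem \ref{Thm-slow-variable} yields $|V((x-x_0)/\e_k)-v^\infty|\geq \tilde\omega(\lambda_k)$, and the comparison estimate together with $u_0\equiv v^\infty=\bar g$ on $D$ gives
\[
|u_{\e_k}(x)-u_0(x)|\geq \tilde\omega(\lambda_k)-C\e_k^{\alpha}\geq \omega(1/\e_k),
\]
by the slack built into $\tilde\omega$, proving (a). Part (b) follows from Theorem \ref{Thm-Prange} applied at each boundary point combined with the boundary-layer comparison. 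The \emph{main obstacle} lies precisely in (b): for $u_\e(x)\to u_0(x)$ at \emph{every} interior $x\in D$ we need the boundary-layer tails at \emph{every} point $y\in\partial D$ (each with its own normal $n(y)$) to coincide with the constant $\bar g$, whereas by construction this is built in only at $x_0$. The resolution is to use the explicit Fourier-spectrum structure of the datum produced by Theorem \ref{Thm-slow-variable}, whose modes are concentrated on lattice vectors witnessing the non-Diophantine failure of $n$; for normals $n(y)\neq n$ (generically Diophantine on a strictly convex boundary, by the measure estimate $\sigma(\Ss^{d-1}\setminus\mathcal A_\kappa)\lesssim\kappa^{d-1}$ recalled in the introduction) these Fourier modes produce no tail contribution beyond $\bar g$, which is exactly the place where the methods of \cite{ASS1}-\cite{ASS3} intervene to control the global Fourier-side cancellation.
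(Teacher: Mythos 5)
Your approach has a fatal structural flaw at Step 2: you take $D$ to be a \emph{strictly} convex domain (a ball). But for smooth domains with non-vanishing Gaussian curvature, the results of \cite{ASS1}--\cite{ASS2} give an \emph{algebraic} upper bound $|u_\e(x)-u_0(x)|\lesssim \e^{\alpha}\,\|g\|_{C^m}$ for $x$ in a fixed compact subset $D'\Subset D$, uniformly over all smooth periodic data $g$. Hence part a) with an arbitrarily slow modulus $\omega$ (say $\omega(t)=1/\log t$) is simply impossible on a ball, no matter how cleverly $g$ is chosen. The paper's construction is built precisely to evade this: $D$ is a smoothed convex polytope whose boundary contains a genuinely \emph{flat} $(d-1)$-dimensional ball $\Pi_0$ with a carefully chosen irrational, non-Diophantine normal, while the rest of $\partial D$ is positively curved. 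The solution is then written via the Poisson kernel, the curved part of the boundary integral is shown to decay at a fixed rate $\omega_0(\lambda)$ by a stationary-phase argument (Proposition \ref{prop-main-for-Dirichlet}), and the flat part is made large along a sequence $\lambda_k$ by applying Lemma \ref{Lem-family-on-compact-supp} to the family of Poisson kernels restricted to $\Pi_0$, with the modulus $\omega+\omega_0^{1/2}$ built in to dominate the curved contribution.

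A second genuine gap is the comparison estimate $|u_\e(x)-V((x-x_0)/\e)|\leq C\e^{\alpha}$ claimed for $x$ in a fixed interior set $D'$. No such bound holds: at an interior point at distance of order one from $\partial D$, $u_\e(x)=\int_{\partial D}P(x,y)g(y/\e)\,d\sigma(y)$ is a Poisson-kernel average over the \emph{entire} boundary, and its limit is a weighted average of the boundary layer tails associated with all boundary normals, not the value of the single halfspace profile anchored at $x_0$. This is exactly why the paper works with the integral representation globally rather than freezing at one boundary point; it is also why your proposed resolution of part b) (appealing to generic Diophantine normals on a strictly convex boundary) remains heuristic — in the paper, part b) follows from the decay of the curved part plus a direct integration-by-parts/equidistribution argument on the flat part using the irrationality of $n$. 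Finally, note that freezing $A$ at an interior point and changing variables by $A(x_0^*)^{1/2}$ introduces an $O(1)$ error at interior points and destroys the $\Z^d$-periodicity of $g(x/\e)$; neither step is needed, since no interior homogenization occurs in \eqref{Dirichlet-bdd-domains} and the paper keeps the variable-coefficient operator throughout via its Green/Poisson kernel bounds (Lemma \ref{Lem-Poisson-dist-smooth}).
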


\vspace{0.2cm}

This result should be compared with \cite{ASS1}-\cite{ASS3}, where under certain geometric conditions 
on boundary of the domain (such as \emph{strict} convexity, or flat pieces having Diophantine normals
as considered in \cite{ASS3})
it is proved that periodic homogenization of boundary value problems for elliptic
operators in divergence form holds pointwise, as well as in $L^p$ norm where $1\leq p <\infty$, with an algebraic rate in $\e$. However here, we see
that again due to the geometry of the domain, convergence can slow down arbitrarily.
Thus relying merely on the smoothness of the data
involved in the problem, one can not get a meaningful quantitative theory for homogenization problems with divergence structure
as above.


\section{Preliminaries on solutions to boundary layer problems}\label{sec-sol-prelim}

The aim of this section is to give a precise meaning to a solution of problem (\ref{bdry-layer-system-recall}).
The well-posedness of (\ref{bdry-layer-system-recall}) in non-rational setting
was first studied by G\'{e}rard-Varet and Masmoudi  \cite{GM-JEMS}, and later
by the same authors in \cite{GM-Acta}, and by Prange in \cite{Prange}, all in connection with homogenization of Dirichlet problem.
Here for the exposition we will follow mainly \cite{GM-Acta} and \cite{Prange}.
Let us also note that the results presented in this section hold for strictly elliptic systems,
however we will only use them for scalar equations, and thus formulate the results in the setting
 of a single equation only.

Keeping the notation of problem \eqref{bdry-layer-system-recall}, fix some matrix $M\in \OO(d)$
such that $M e_d= n$. Then in (\ref{bdry-layer-system-recall})
make a change of variables by $y=Mz$ and
transform the problem to
\begin{equation}\label{Z9}
\begin{cases} -\nabla_z \cdot B(Mz) \nabla_z \textbf{v}(z) =0 ,&\text{  $z_d>a $}, \\
  \textbf{v} (z)= v_0( Mz ) ,&\text{  $ z_d= a  $,}    \end{cases}
\end{equation}
where $\textbf{v}(z) = v(Mz)  $ and the new matrix $B$ is defined by
$B = M^T A  M$. 
From the definition of $M$ we have $M=[N| n]$, where $N$ is a matrix of size $d\times (d-1)$. Then
the solution to (\ref{Z9}) is being sought of the form
$$
\textbf{v}(z) = V(Nz', z_d),  \ \text{ where } \ V(\cdot, t) \text{ is } \Z^d \text{-periodic for any } t\geq a,
$$
and $z=(z', z_d)\in \R^{d-1}\times \R$. This leads to the following problem
\begin{equation}\label{bl-system-d+1}
\begin{cases}
  - \left(
    \begin{array}{c}
      N^T \nabla_\theta \\
      \partial_t \\
    \end{array}
  \right) \cdot B(\theta + tn) \left(
                                         \begin{array}{c}
                                           N^T \nabla_\theta \\
                                           \partial_t \\
                                         \end{array}
                                       \right) V(\theta, t) =0
       ,&\text{  $ t > a $}, \\
 V(\theta, t) = v_0 ( \theta + a n ), &\text{  $ t = a $}.    \end{cases}
\end{equation}
The authors of \cite{GM-Acta} then show that (\ref{bl-system-d+1}) has a smooth solution $V$
in the infinite cylinder $\mathbb{T}^d \times [a,\infty)$ satisfying
certain energy estimates. Moreover, if $V$ solves (\ref{bl-system-d+1}), then one can easily see that $\textbf{v}(z) = V(Nz', z_d)$
gives a solution to \eqref{Z9}. The proof of this well-posedness result is not hard,
but what is rather involved is the analysis of asymptotics of $V(\cdot , t)$ as $t \to \infty$.
A proper understanding of this problem for $V$ gives the behaviour of solutions to (\ref{bdry-layer-system-recall})
far away from the boundary of the corresponding halfspace.
In this regard, it was proved in \cite{GM-JEMS}
that if $n$ is Diophantine in a sense of (\ref{class-diophantine}), then there exists a constant $v^\infty$ depending on $n$ and independent of $a$, 
such that $|v(y)-v^\infty|\leq C_\alpha (y\cdot n)^{-\alpha} $, for any $\alpha>0$ as $y\cdot n \to \infty$, and convergence is locally
uniform with respect to tangential variables.

Shortly after \cite{GM-JEMS} and \cite{GM-Acta}, a refined analysis of well-posedness of problems of type (\ref{bdry-layer-system-recall})
was given by Prange \cite{Prange}. In particular he established the following result. 
\begin{theorem}\label{Thm-Prange} {\normalfont{(Prange \cite[Theorem 1.2]{Prange})}}
Assume $n \notin \R \Q^d$. Then
\begin{itemize}
 \item[{\normalfont{1.}}] there exists a unique solution $v\in C^\infty(\overline{\Omega_{n,a} }) \cap L^\infty (\Omega_{n,a})$ of (\ref{bdry-layer-system-recall}) such that
$$
\qquad || \nabla v ||_{L^\infty(\{y \cdot n >t\} ) } \to 0, \text{ as } t \to \infty,
$$

$$
\int_a^\infty || (n \cdot \nabla) v ||_{L^\infty(\{ y\cdot n -t=0 \})  }^2 dt <\infty,
$$

\item[{\normalfont{2.}}] and a boundary layer tail $v^\infty \in \R$ independent of $a$ such that 
$$
v(y) \to v^\infty, \text{ as } y\cdot n \to \infty, \text{ where } y\in \Omega_{n,a},
$$
and convergence is locally uniform with respect to tangential directions.
\end{itemize}

\end{theorem}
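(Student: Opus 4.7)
The plan is to move to the cylinder formulation \eqref{bl-system-d+1} by rotating $n$ to $e_d$ with an orthogonal $M$ and writing $\textbf{v}(z) = V(Nz', z_d)$ with $V(\cdot, t)$ periodic on $\mathbb{T}^d$. All analysis then takes place on $\mathbb{T}^d \times [a,\infty)$, which is the natural periodic arena. The overall strategy is to construct the desired $V$ as a limit of solutions $V_k$ associated with Diophantine approximants $n_k \to n$, for which Gérard-Varet and Masmoudi already furnish smooth solutions and tails $v_k^\infty$ that are independent of $a$; uniqueness and the identification of the limit tail are then extracted by energy methods that do not see the Diophantine constant.

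For the existence half of part 1, I would first establish a priori bounds for the Diophantine family $V_k$ that depend only on $\lambda$ and $\|v_0\|_{C^m}$: a Saint-Venant type energy identity multiplying the equation by $V_k - v_k^\infty$ and integrating over $\mathbb{T}^d \times [t, \infty)$ controls $\int_a^\infty \|(n_k\cdot\nabla)V_k\|_{L^\infty}^2 dt$, while $\theta$-regularity combined with De Giorgi--Nash--Moser applied slabwise gives $\|\nabla V_k\|_{L^\infty(\{y\cdot n_k > t\})} \to 0$ as $t \to \infty$. I would then extract a subsequential smooth limit $V$, using that the coefficients $B(\theta + t n_k)$ depend continuously on $n_k$ as functions on $\mathbb{T}^d$ for each fixed $t$, and verify that the limit $V$ satisfies \eqref{bl-system-d+1} and the two decay conditions.

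Uniqueness in the stated class is obtained by applying the same energy identity to the difference $W = V^{(1)} - V^{(2)}$ of two candidate solutions. The boundary term at the slice $\{t = T\}$ is controlled by the hypothesis $\|\nabla V\|_{L^\infty(\{y\cdot n > T\})} \to 0$, the interior term by the $L^2$ integrability of $(n\cdot \nabla) V$, so sending $T\to\infty$ and using ellipticity forces $\nabla W \equiv 0$, hence $W \equiv 0$ after matching boundary data. The existence of the tail $v^\infty$ in part 2 follows by showing that $\bar V(t) := \int_{\mathbb{T}^d} V(\theta, t)\, d\theta$ is Cauchy as $t\to\infty$ (from the $L^2$ estimate above) and that $V(\theta, t) - \bar V(t) \to 0$ uniformly on compacta (from the gradient decay plus equicontinuity in $\theta$). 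Independence of $v^\infty$ from $a$ reduces, after translating, to comparing two solutions that solve homogeneous problems on overlapping halfspaces with the same far-field limit; alternatively, one transports the $a$-independence from the Diophantine $n_k$ through the limit, using that $v^\infty = \lim_k v_k^\infty$ along a suitable rate of approximation.

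The main obstacle will be producing uniform estimates on the Diophantine family $\{V_k\}$ and on $\{v_k^\infty\}$ that do not degenerate as $n_k \to n$, since the quantitative bounds of Gérard-Varet and Masmoudi scale with the Diophantine constant $\kappa_k$, which is unbounded in $k$ by design. The resolution is to run the arguments purely through cylindrical PDE energy identities rather than through Fourier summation against $|P_{n_k^\perp}\xi|^{-1}$, thereby trading all quantitative control in $t$ for qualitative compactness of $\{V_k\}$ in $C^m_{\mathrm{loc}}$, which is exactly enough to pass to the limit and extract the desired $V$ and $v^\infty$ without any rate.
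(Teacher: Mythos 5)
First, note that the paper does not prove this statement at all: Theorem \ref{Thm-Prange} is imported verbatim from Prange \cite[Theorem 1.2]{Prange} and used as a black box, so there is no internal proof to compare against. Judged on its own terms, your proposal has genuine gaps at exactly the points where the theorem is hard, namely the existence of the tail for a general irrational (possibly non-Diophantine) direction. The approximation scheme by Diophantine directions $n_k\to n$ cannot deliver this: as you yourself note, every quantitative bound on $V_k(\cdot,t)-v_k^\infty$ degenerates with the Diophantine constant, so $C^m_{\mathrm{loc}}$ compactness on $\mathbb{T}^d\times[a,\infty)$ only identifies the limit $V$ on compact sets in $t$ and says nothing about $\lim_{t\to\infty}V(\cdot,t)$. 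Moreover, identifying $v^\infty$ with $\lim_k v_k^\infty$ presupposes continuity of $n\mapsto v^\infty(n)$ along your approximating sequence, which is itself a delicate and separate question (this map is known to behave badly near rational directions), so the "transport of $a$-independence through the limit" is circular as stated.

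The fallback argument you give for the tail also fails at both steps. (i) Square integrability of $\|\partial_t V(\cdot,t)\|_{L^\infty(\mathbb{T}^d)}$ does not make $\bar V(t)=\int_{\mathbb{T}^d}V(\theta,t)\,d\theta$ Cauchy: Cauchy--Schwarz gives $|\bar V(t_2)-\bar V(t_1)|\lesssim (t_2-t_1)^{1/2}\bigl(\int_{t_1}^{t_2}\|\partial_t V\|^2\,dt\bigr)^{1/2}$, and the function $t\mapsto\sin(\log t)$ is bounded with square-integrable derivative yet has no limit; so boundedness plus the stated energy estimate is not enough. (ii) The gradient decay $\|\nabla v\|_{L^\infty(\{y\cdot n>t\})}\to 0$ controls only $N^T\nabla_\theta V$ and $\partial_t V$, not the full torus gradient $\nabla_\theta V$; since $|N^T\xi|$ can be arbitrarily small relative to $|\xi|$ for a non-Diophantine $n$ (this is the whole small-divisor issue the present paper exploits), decay of $N^T\nabla_\theta V$ does not force $V(\theta,t)-\bar V(t)\to 0$, even along the dense leaf $\{Nz'\}$. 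Closing this gap is precisely where one needs an ergodicity or Liouville-type argument on the cylinder exploiting the irrationality of $n$ (equidistribution of the linear flow $\theta\mapsto\theta+Nz'$), which is the actual content of Prange's proof and is absent from your outline. The uniqueness and energy-estimate portions of your plan are plausible and standard, but the theorem's substance lies in part 2, and that part is not established by what you propose.
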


Let us fix here that by \emph{boundary layer tail} we always mean the constant to which solutions of
problem (\ref{bdry-layer-system-recall}) converge away from the boundary of the corresponding halfspace.


\section{Arbitrarily slow convergence for Laplacian}\label{sec-Laplace}

The objective of the present section is to prove Theorem \ref{Thm-slow-variable} for Laplacian.
The reason for separating the case of Laplace operator is twofold. First, we will
introduce part of the key ideas that will be used in the general case of variable coefficient operators.
Second, the setting of Laplacian is essentially self-contained, and is more transparent
in comparison to the general case which is based on a different approach.
We prove the following result.

\begin{theorem}\label{Thm-slow-conv-Laplace}
Theorem \ref{Thm-slow-variable} holds when the operator in (\ref{bdry-layer-system-recall}) is the Laplacian.
\end{theorem}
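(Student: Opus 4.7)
The plan is to exploit the explicit Poisson/Fourier representation of bounded harmonic functions in a halfspace to engineer slow convergence by a Liouville-type construction. First I would write down the solution explicitly. Expanding the $\Z^d$-periodic boundary data as $v_0(y)=\sum_{\xi\in\Z^d}c_\xi e^{2\pi i\xi\cdot y}$ and using that $n\notin\R\Q^d$ forces $P_{n^\bot}(\xi)\neq 0$ for every nonzero $\xi\in\Z^d$, the unique bounded harmonic extension to $\Omega_{n,0}$ is
\[
v(y)=c_0+\sum_{\xi\neq 0}c_\xi\, e^{2\pi i P_{n^\bot}(\xi)\cdot y_\parallel}\, e^{-2\pi|P_{n^\bot}(\xi)|(y\cdot n)},\qquad y_\parallel:=y-(y\cdot n)n,
\]
and the boundary layer tail provided by Theorem \ref{Thm-Prange} is $v^\infty=c_0$. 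Thus for $y=y'+\lambda n$ with $y'\in\partial\Omega_{n,0}$,
\[
v(y'+\lambda n)-v^\infty=\sum_{\xi\neq 0}c_\xi\, e^{2\pi i P_{n^\bot}(\xi)\cdot y'}\, e^{-2\pi|P_{n^\bot}(\xi)|\lambda}.
\]

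The next step is to design the data. I would choose a single-frequency lacunary series
\[
v_0(y)=\sum_{k=1}^\infty c_k\cos(2\pi\xi_k\cdot y),
\]
with integer vectors $\xi_k$, a direction $n$, decreasing numbers $\mu_k:=|P_{n^\bot}(\xi_k)|$ and scales $\lambda_k:=(4\pi\mu_k)^{-1}$ built inductively to satisfy, for each $k$,
(i) $\mu_k R\le 1/10$, so $\cos(2\pi P_{n^\bot}(\xi_k)\cdot y')\ge 1/2$ for all $y'\in B(0,R)$;
(ii) $\mu_{k-1}/\mu_k$ is large enough that $\sum_{j<k}c_j e^{-\mu_j/(2\mu_k)}\le c_k/8$, using the super-exponential smallness of $e^{-\mu_j/(2\mu_k)}$;
(iii) $c_{j+1}\le c_j/10$, which gives $\sum_{j>k}c_j\le c_k/9$ and hence, since $e^{-2\pi\mu_j\lambda_k}\le 1$ for $j>k$, controls the high-frequency tail;
(iv) $c_k\ge 4\omega(\lambda_k)$;
(v) $c_k|\xi_k|^{k}\le 2^{-k}$, guaranteeing $v_0\in C^\infty(\T^d)$.

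With these choices the $k$-th term in the displayed sum at $\lambda=\lambda_k$ has modulus at least $c_k\cdot\tfrac12\cdot e^{-1/2}\ge c_k/4$, while (ii)-(iii) bound the remaining terms by $c_k/4$ combined; therefore $|v(y'+\lambda_k n)-v^\infty|\ge c_k/8\ge\omega(\lambda_k)/2$, which after absorbing constants into $\omega$ (or passing to a subsequence) yields the desired bound \eqref{z1}.

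The step I expect to be the main obstacle is the simultaneous construction of $n\in\Ss^{d-1}\setminus\R\Q^d$ together with integer vectors $\xi_k$ realising the super-fast decay of $\mu_k$ demanded by (ii) and (iv)-(v). I would produce them by a diagonal/Cauchy scheme: starting from an initial irrational $n^{(0)}$, at stage $k$ pick a large integer vector $\xi_k$ (with $|\xi_k|$ chosen after $\omega(\lambda_k)$ is known, to satisfy (iv)-(v) consistently) and then perturb $n^{(k-1)}$ within a spherical neighbourhood of radius $\ll\min_{j<k}\mu_j$ to a new unit vector $n^{(k)}$ so that $|P_{(n^{(k)})^\bot}(\xi_k)|$ is below any prescribed threshold; the perturbation is small enough not to spoil the inequalities already arranged for $\xi_1,\dots,\xi_{k-1}$. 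Taking $n:=\lim_k n^{(k)}$ and keeping the perturbations always off the rational directions ensures $n\in\Ss^{d-1}\setminus\R\Q^d$, completing the construction. This scheme, together with the explicit harmonic-extension formula above, yields the theorem for the Laplacian, and it also introduces the two mechanisms -- exponentially concentrated Fourier modes and a Liouville-type normal -- that will be reused in the variable-coefficient setting of Theorem \ref{Thm-slow-variable}.
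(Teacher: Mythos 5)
Your overall strategy is the same as the paper's: write the bounded harmonic extension explicitly as a Fourier/Poisson series in which each nonzero mode decays like $e^{-2\pi|P_{n^\bot}(\xi)|(y\cdot n)}$, support the data on a lacunary set of integer frequencies $\xi_k$ nearly parallel to a specially built direction $n$ (so that $\mu_k=|P_{n^\bot}(\xi_k)|$ decays super-fast), control the phase on $B(0,R)$ by making $\mu_k R$ small, and at the scale $\lambda_k\asymp\mu_k^{-1}$ isolate the $k$-th term against the low frequencies (killed by the exponential since $\mu_j/\mu_k$ is huge for $j<k$) and the high frequencies (killed by coefficient decay). This is exactly the mechanism of the paper's Lemma \ref{Lem-bad-normals} and Proposition \ref{Thm-slow-conv}; the only cosmetic difference is that the paper first proves a lower bound for the $L^2(\mathbb{T}^d)$ norm via Parseval and then arranges all terms to be positive, whereas you run a direct triangle-inequality estimate on one term, and your arithmetic constants ($1/4-1/8-1/9$) need the harmless renormalisation of $\omega$ you already allow for.

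There is, however, one genuine gap: your justification that the limiting direction is irrational. You write that ``keeping the perturbations always off the rational directions ensures $n\in\Ss^{d-1}\setminus\R\Q^d$,'' but irrationality is not preserved under limits --- a sequence of irrational unit vectors can perfectly well converge to a rational direction (e.g.\ to $e_1$), and in your scheme the approximants are being pushed ever closer to the \emph{rational} directions $\xi_k/|\xi_k|$, so the danger is real. Irrationality of the limit must instead be extracted from the \emph{speed} of the approximation relative to the heights $|\xi_k|$: if $n$ were of the form $\xi_0/|\xi_0|$, the quantities $p_0^2|\xi^{(k)}|^2-p_k^2|\xi_0|^2$ would be integers forced to vanish for large $k$, which in turn forces the approximating directions to stabilise, contradicting the strict inequalities in the construction. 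This Liouville-type argument occupies roughly half of the paper's proof of Lemma \ref{Lem-bad-normals} and cannot be replaced by the remark you make. A secondary, fixable imprecision: to leave $\mu_j=|P_{n^\bot}(\xi_j)|$ essentially unchanged at later stages, the perturbation of the direction must be $\ll\mu_j/|\xi_j|$ (a perturbation of $n$ by $\delta$ moves $P_{n^\bot}(\xi_j)$ by up to $\delta|\xi_j|$), not merely $\ll\mu_j$; and since $\lambda_j$ is defined through the final value of $\mu_j$, one should fix a target value of $\mu_j$ (hence of $\lambda_j$) at stage $j$ and ensure all later perturbations keep $\mu_j$ within a fixed factor of that target, so that conditions (i)--(iv) survive the passage to the limit.
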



The proof of Theorem \ref{Thm-slow-conv-Laplace} is based on a series of observations and preliminary statements.
We will use the connection of problem (\ref{u-on-Omega-n}) with the corresponding problem \eqref{bl-system-d+1}
set on a cylinder $\mathbb{T}^d \times [0,\infty)$ (cf. \cite[Theorem 7.1]{Prange}). For that fix a matrix $M\in \OO (d)$ such
that $M e_d = n$, clearly $M$ is of the form
$M=[N | n]$ where $N$ is a matrix of size $d\times (d-1)$. 
Then writing (\ref{bl-system-d+1}) when the original operator in (\ref{bdry-layer-system-recall})
is Laplacian and $a=0$ we get
\begin{equation}\label{V-of-u}
\begin{cases} \begin{vmatrix}
 N^T \nabla_\theta  \\
\partial_t 
\end{vmatrix}^2 V(\theta , t) =0 ,&\text{  $t>0, \ \theta \in \mathbb{T}^d $}, \\
 V(\theta , 0) =v_0(\theta) ,&\text{  $  \theta \in \mathbb{T}^d  $,}    \end{cases}
\end{equation}
where as before $V(\cdot, t)$ is $\Z^d$-periodic for all $t\geq 0$, and the action of the operator on $V$
is understood as $(N^T \nabla_\theta, \partial_t )\cdot (N^T \nabla_\theta, \partial_t) V$. As we have discussed
in Section \ref{sec-sol-prelim}, the unique solution $v$ of \eqref{u-on-Omega-n} (in a sense of Theorem \ref{Thm-Prange}) is given by
\begin{equation}\label{v-in-terms-of-V}
 v(y) = v(Mz) = V(Nz', z_d), \text{ where } y=Mz \text{ with } y\in \Omega_n  \text{ and } z\in \R^d_+.
\end{equation}

\noindent In this setting the solution $V$ of \eqref{V-of-u} can be computed explicitly, namely
we have
\begin{equation}\label{V-series}
V(\theta, t) = \sum\limits_{\xi \in \Z^d} c_\xi (v_0) e^{  -2\pi |N^T \xi| t } e^{ 2\pi i \xi \cdot \theta },
\end{equation}
where $c_\xi(v_0)$ is the $\xi$-th Fourier coefficient of $v_0$. In view of (\ref{V-series}) it is clear the
boundary layer tail equals $c_0(v_0)$. 
We will first establish a slow convergence result for $V$ using
which we will prove Theorem \ref{Thm-slow-conv-Laplace}.

Observe that by (\ref{V-series}) and Parseval's identity, for any $t\geq 0$ we have
\begin{multline}\label{Parseval}
|| V(\theta, t) - c_0(v_0) ||_{L^\infty( \mathbb{T}^d ) }^2  \geq   || V(\theta, t) - c_0(v_0) ||_{L^2( \mathbb{T}^d ) }^2 = \\
\sum\limits_{\xi \in \Z^d \setminus \{0\}} |c_\xi(v_0)|^2 e^{-4\pi |N^T \xi| t} := \mathcal{S}(t; v_0).
\end{multline}

\begin{prop}\label{Thm-slow-conv}
For any modulus of continuity $\omega$
there exists a unit vector $n\notin \R \Q^d$, a smooth function $v_0 : \mathbb{T}^d \to \R$, and a sequence
of positive numbers $t_k \nearrow \infty$, $k=1,2,...$ such that
$$
\mathcal{S}(t_k; v_0 ) \geq \omega( t_k), \qquad k=1,2,...,
$$
where $\mathcal{S}$ is defined by (\ref{Parseval}).
\end{prop}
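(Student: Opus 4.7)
The plan is to construct $v_0$ whose Fourier spectrum is supported on a sparse sequence $\{\pm\xi_k\}\subset\Z^d$ along which the projection $|N^T\xi_k|$ is prescribed to be arbitrarily small — a quantitative failure of the Diophantine condition — while keeping $n$ itself non-rational. This smallness will allow me to choose $t_k\to\infty$ quickly enough that the exponential factor $e^{-4\pi|N^T\xi_k|t_k}$ in \eqref{Parseval} stays bounded below, while the Fourier coefficients of $v_0$ still decay fast enough to force $v_0\in C^\infty(\T^d)$.

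First I would reduce $|N^T\xi|$ to a scalar quantity. Since $M=[N\,|\,n]\in\OO(d)$ one has $N^TN=I_{d-1}$, hence $|N^T\xi|=|P_{n^\perp}\xi|$. Take $n=(\cos\theta,\sin\theta,0,\ldots,0)$ with $\cot\theta$ irrational (which guarantees $n\notin\R\Q^d$), and choose the first column of $N$ to be $(-\sin\theta,\cos\theta,0,\ldots,0)^T$; then for integer vectors of the form $\xi=(p,q,0,\ldots,0)$,
$$
|N^T\xi|=|q\cos\theta-p\sin\theta|=\sin\theta\cdot|q\cot\theta-p|.
$$
With $\xi_k=(p_k,q_k,0,\ldots,0)$ and $q_k\to\infty$ (to be refined below), set
$$
v_0(y):=\sum_{k=1}^\infty 2a_k\cos(2\pi\xi_k\cdot y),\qquad a_k:=e^{-|\xi_k|},
$$
which is a real-valued $C^\infty(\T^d)$ function. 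Retaining only the $\pm\xi_k$ contribution in \eqref{Parseval} gives $\mathcal{S}(t;v_0)\geq 2a_k^2\,e^{-4\pi|N^T\xi_k|t}$. Now define $t_k:=\omega^{-1}(a_k^2)$; since $a_k\to 0$ and $\omega$ is a strictly decreasing bijection, $t_k\to\infty$. It then suffices to arrange
$$
|N^T\xi_k|\ \leq\ \frac{\log 2}{4\pi\,t_k}, \qquad (\ast)
$$
for then $\mathcal{S}(t_k;v_0)\geq 2a_k^2\cdot\tfrac12=a_k^2=\omega(t_k)$, which is the desired lower bound.

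To secure $(\ast)$ I would construct $\alpha:=\cot\theta$ inductively as a Liouville-type series $\alpha=\sum_{j\geq 1}10^{-n_j}$ with rapidly growing integer exponents $n_1<n_2<\cdots$. At stage $k$, the truncation $\sum_{j=1}^{k}10^{-n_j}$ gives a rational $p_k/q_k$ with $q_k\leq 10^{n_k}$; the values $a_k,t_k$ are thereby already determined by $q_k$, and the residual error $|q_k\alpha-p_k|$ depends only on the tail $n_{k+1},n_{k+2},\ldots$. I would then choose $n_{k+1}$ large enough that $2q_k\sin\theta\cdot 10^{-n_{k+1}}\leq \log 2/(4\pi t_k)$, which yields $(\ast)$ at stage $k$ without disturbing the bounds already obtained at stages $<k$.

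The main obstacle is precisely this apparent circularity — the threshold on $|N^T\xi_k|$ depends on $t_k$, which depends on $|\xi_k|$, which is itself being chosen as part of the construction — but it is resolved because the Liouville-type construction decouples the \emph{size} of $q_k$ (fixed once $n_k$ is chosen) from the \emph{quality} of the approximation (fixed only by the later terms $n_{k+1},n_{k+2},\ldots$), so a genuine induction goes through. The irrationality of $\alpha$, i.e. $n\notin\R\Q^d$, follows automatically from the rapid growth of the $n_j$, which makes $\alpha$ admit approximations violating Liouville's inequality.
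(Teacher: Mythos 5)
Your proof is correct, and the engine is the same as the paper's: the normal is built as a limit of rational directions whose approximation quality at stage $k$ is fixed only \emph{after} the frequency $\xi_k$ --- and hence $a_k$ and $t_k$ --- has been fixed; this is exactly how the apparent circularity is broken in the paper as well. The differences are in the packaging. The paper works in general position on $\Ss^{d-1}$ via Lemma \ref{Lem-bad-normals}: it first shows, by density of rational directions on the sphere, that for \emph{any} prescribed modulus $\omega_1$ there exist an irrational $n$ and an infinite $\Lambda\subset\Z^d$ with $|N^T\xi|\le\omega_1(|\xi|)$ for all $\xi\in\Lambda$, and then feeds in the specific modulus $\omega_1(t)=\bigl(4\pi\,\omega^{-1}(e^{-1}t^{-2t})\bigr)^{-1}$, which pre-encodes the requirement $4\pi|N^T\xi^{(k)}|\,t_k\le 1$ using only the a priori bound $|\xi^{(k)}|\ge k$. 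You instead restrict to the sublattice $\Z^2\times\{0\}^{d-2}$, realize the normal through an explicit Liouville-type series for $\cot\theta$, and interleave the choice of $n_{k+1}$ with the computation of $t_k$ in a single induction. Your version is more concrete and avoids the somewhat opaque $t^{-2t}$ device; what the paper's formulation buys is reusability --- Lemma \ref{Lem-bad-normals} is stated for an arbitrary target modulus and arbitrary completion $M=[N|n]$, and is invoked again verbatim in the variable-coefficient argument (Lemma \ref{Lem-slow-conv-family-of-F}), where the explicit Fourier representation available for the Laplacian is gone. Two small points to tidy, neither of which affects the argument: since $\omega^{-1}$ is only defined on $(0,\omega(0)]$, start the induction with $n_1$ large enough that $a_1^2\le\omega(0)$; and since $\sin\theta$ depends on the entire tail of the series, simply bound $\sin\theta\le 1$ when selecting $n_{k+1}$, noting that enlarging later exponents only shrinks $|q_j\alpha-p_j|$ for $j\le k$ and so cannot destroy the bounds already secured.
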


As we can observe from (\ref{Parseval}) convergence properties of $\mathcal{S}$
depend on the quantity $|N^T \xi|$ which is the subject of the next result.

\begin{lem}\label{Lem-bad-normals} 
Given any modulus of continuity $\omega$,
there exists a unit vector $n\notin \R \Q^d$, and an infinite set $\Lambda \subset \Z^d \setminus \{ 0\}$
such that for any matrix $M=[N |n] \in \OO(d)$ one has
$$
| N^T \xi | \leq \omega(|\xi |), \qquad \forall \xi \in \Lambda.
$$
\end{lem}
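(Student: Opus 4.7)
The plan is to construct $n$ via a Liouville-type nested scheme on $\Ss^{d-1}$, using density of rational directions as the only ingredient beyond orthogonality of $M$.

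The first observation is that $|N^T\xi|$ depends only on $n$, not on the particular choice of $N$. Since $M=[N|n]\in\OO(d)$ is orthogonal,
$$
|N^T\xi|^2+(n\cdot\xi)^2=|M^T\xi|^2=|\xi|^2,
$$
so $|N^T\xi|=|\xi|\sin\theta(n,\xi)$, where $\theta(n,\xi)$ is the angle between $n$ and $\xi$. The lemma therefore reduces to producing an irrational unit vector $n$ and an infinite set $\Lambda\subset\Z^d\setminus\{0\}$ satisfying $|\xi|\sin\theta(n,\xi)\leq\omega(|\xi|)$ for all $\xi\in\Lambda$, that is, a direction anomalously well approximated by rational directions.

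I would then build $n$ together with a sequence of centers $n_k=\xi_k/|\xi_k|$, $\xi_k\in\Z^d\setminus\{0\}$, and closed nested spherical caps $C_k=\overline{B(n_k,r_k)}\cap\Ss^{d-1}$ with $C_{k+1}\Subset C_k$, as follows. Pick any $\xi_1$ and set $r_1=\omega(|\xi_1|)/(2|\xi_1|)$. Given $n_k$ and $r_k$, density of $\{\xi/|\xi|:\xi\in\Z^d\setminus\{0\}\}$ in $\Ss^{d-1}$ lets me select $\xi_{k+1}$ with $|\xi_{k+1}|>|\xi_k|$ (ensuring the $\xi_k$ are distinct) and $\xi_{k+1}/|\xi_{k+1}|$ inside the subcap $B(n_k,r_k/2)\cap\Ss^{d-1}$; I then set
$$
r_{k+1}=\min\!\Bigl\{\frac{\omega(|\xi_{k+1}|)}{2|\xi_{k+1}|},\,\frac{r_k}{4}\Bigr\},
$$
which both ensures $C_{k+1}\Subset C_k$ and guarantees $\sin\theta(n,n_{k+1})\leq r_{k+1}\leq \omega(|\xi_{k+1}|)/(2|\xi_{k+1}|)$ for every $n\in C_{k+1}$. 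The radii shrink geometrically, so $\bigcap_k C_k$ consists of a single point $n$, and by construction
$$
|N^T\xi_k|=|\xi_k|\sin\theta(n,\xi_k)\leq \omega(|\xi_k|)/2\leq \omega(|\xi_k|),\qquad k\geq 1,
$$
so $\Lambda:=\{\xi_k\}_{k\geq 1}$ does the job, provided the limit $n$ is irrational.

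The only slightly delicate point, and what I expect to be the main obstacle, is guaranteeing $n\notin\R\Q^d$: the scheme above only delivers $n$ as a limit of rationals, and a priori it could itself be rational. I would resolve this by the usual countable-avoidance trick: enumerate $\R\Q^d\cap\Ss^{d-1}=\{q_j\}_{j\geq 1}$ and, at the $k$-th step, additionally require $n_{k+1}\in\Ss^{d-1}\setminus B(q_k,r_k/4)$ and $r_{k+1}<r_k/8$. Density of lattice directions on the sphere is enough to locate such an $n_{k+1}$ (the forbidden set removes only a small spherical disk from the allowed subcap), and these extra constraints force $q_k\notin C_{k+1}$, hence $n\neq q_k$ for every $k$. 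This produces $n\notin\R\Q^d$ and completes the construction.
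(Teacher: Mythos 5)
Your proof is correct, and its quantitative core coincides with the paper's: both start from the orthogonality identity $|N^T\xi|^2=|\xi|^2-(n\cdot\xi)^2$ (so that only the angle between $n$ and $\xi$ matters) and build $n$ as a limit of rational directions $\xi_k/|\xi_k|$ whose mutual distances are controlled by $\omega(|\xi_k|)/|\xi_k|$, using only the density of rational directions on $\Ss^{d-1}$. The genuine difference is how irrationality of the limit is secured. The paper makes the gaps super-exponentially small, of size $\omega^2(|\xi^{(k)}|)/(10^k|\xi^{(k)}|^2)$, and runs a Liouville-type argument: if $n$ were rational with integer representative $\xi_0$, the integer $\bigl|p_0^2|\xi^{(k)}|^2-p_k^2|\xi_0|^2\bigr|$ would have to vanish for large $k$, forcing the components of $r_k$ to stabilize and contradicting $r_{k+1}\neq r_k$. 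You instead enumerate $\R\Q^d\cap\Ss^{d-1}$ and excise, at stage $k$, a small cap around the $k$-th rational direction from the next nested cap; this is a standard countable-avoidance argument and is, if anything, simpler --- it needs neither the $10^{-k}$ factors nor the squaring of $\omega$, since you work directly with $\sin\theta$ rather than with $1-|n\cdot\xi|/|\xi|$. Two minor points if you want your version to be a drop-in replacement: the later applications (Proposition \ref{Thm-slow-conv}, Remark \ref{rem-large-gap}, and condition \eqref{kappa-gap} in Lemma \ref{Lem-slow-conv-family-of-F}) additionally use that $|\xi^{(k)}|\geq k$ and that arbitrarily large multiplicative gaps between consecutive $|\xi^{(k)}|$ can be imposed; both are trivially absorbed into your selection of $\xi_{k+1}$.
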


\vspace{0.2cm}
As usual here as well $N$ is a $d\times (d-1)$-matrix formed from the first $(d-1)$ columns of $M$.
Obviously $M e_d=n$. This lemma is one of the key statements used in the general case as well.
\vspace{0.1cm}

\begin{proof}[Proof of Lemma \ref{Lem-bad-normals}]
Set $\xi^{(1)}: = e_1 = (1,0,...,0)\in \R^d $ and let $\Gamma_1 \subset \Ss^{d-1}$
be an open neighbourhood of $\xi^{(1)}$ centred at $\xi^{(1)}$ and with diameter less than
$\omega^2( | \xi^{(1)} | ) /  ( 10 |\xi^{(1)}|^2  )  $. Due to the density of rational
directions\footnote{For any non-zero $\nu \in \Q^d$ the intersection of the ray starting at $0\in \R^d$ and passing through $\nu$, with the sphere $\mathbb{S}^{d-1}$ is a rational vector of unit length.
Hence the density of rational directions.} there exists a non-zero
$\xi^{(2)} \in \Z^d$ such that $|\xi^{(2)}| \geq 2$ and 
$$
0< \left| \frac{ \xi^{(2)} }{ |\xi^{(2)}|  } - \frac{ \xi^{(1)} }{ |\xi^{(1)}|  } \right| 
\leq \mathrm{diam}(\Gamma_1) \leq \frac{\omega^2( | \xi^{(1)} | )}{ 10 |\xi^{(1)}|^2  }.
$$
Using the same idea, we then inductively construct a sequence $\{ \xi^{(k)} \} \subset \Z^d \setminus \{0\}$
satisfying $k \leq | \xi^{(k)} | < |\xi^{(k+1)}| $ and such that for unit vectors $r_k = \frac{ \xi^{(k)} }{ |\xi^{(k)}|  }$
we get
\begin{equation}\label{xi-k}
  0< \left| r_{k+1} - r_k  \right| \leq \frac{\omega^2( | \xi^{(k)} | )}{ 10^k |\xi^{(k)}|^2  }
\end{equation}
for each $k=1,2,...$ . It is clear by (\ref{xi-k}) that for $k$ large enough one has 
$|r_{k+1} -r_k| <10^{-k}$, therefore the sequence $\{ r_k \}$ is Cauchy, hence it is convergent.
By $n$ we denote its limit, which is obviously a unit vector.
We claim that $n\notin \R\Q^d$, which is
due to fast convergence of the sequence\footnote{This is in analogy with a standard fact in Diophantine approximation theory, that the sum of a very fast converging series of rationals is irrational.} $\{r_k\}$.
Indeed, assume for contradiction that $n\in \R \Q^d$. As $|n|=1$, using the rationality
assumption it is easy to see that there exists a non-zero $\xi_0 \in \Z^d$ such that
$n=\xi_0 / |\xi_0|$. By monotonicity of $|\xi^{ (k) }|$ and $\omega$, along with (\ref{xi-k}) we get
\begin{multline*}
 | (n\cdot e_1)^2  - (r_k \cdot e_1)^2  | \leq 2 | n\cdot e_1  - r_k \cdot e_1  | \leq 2| n   - r_k  | \leq 
 2\sum\limits_{j=k}^\infty  | r_{j+1 } - r_j | \leq \\ 
 2 \sum\limits_{j=k}^\infty  \frac{\omega^2( |\xi^{ (j) }|  )}{10^j |\xi^{ (j) }|^2  } \leq
 \frac{20}{9} \frac{1}{10^k} \frac{\omega^2( |\xi^{ (k) }|  )}{  |\xi^{ (k) }|^2  }, \ \ k=1,2,... \ .
\end{multline*}
By rationality of $n$ we have $n\cdot e_1 = \frac{p_0}{ |\xi_0| }$ with $p_0  \in \Z $, and for $r_k$ we have
$ r_k \cdot e_1 = \frac{p_k}{ |\xi^{ (k) }| } $ with some $p_k\in \Z$. Hence, from the last inequality we get
$$
\biggl| p_0^2  |\xi^{ (k) }|^2     - p_k^2 |\xi_0 |^2  \biggr| \leq \frac{20}{9} \frac{1}{10^k} \omega^2( |\xi^{ (k) }|  ) |\xi_0 |^2.
$$
Since the left-hand side is an integer and is less than 1 by absolute value
for $k$ large enough, it must be 0. From here we conclude that
the sequence $|r_k \cdot e_1 |$ is eventually constant. By our notation this implies
\begin{equation}\label{p-k-const}
\frac{p_k}{ | \xi^{(k)} | } = \pm \frac{p_{k+1}}{ | \xi^{(k+1)} | } , \ k \geq k_0 ,
\end{equation}
where $p_k$ is an integer, representing the first coordinate of $\xi^{(k)}$
and $k_0$ is a large integer. Now, if we have equality 
in the last expression with minus sign, we get
$$
\frac{ 1 }{ 10^k | \xi^{(k)} |^2 } \geq | r_{k+1} - r_k | \geq | r_{k+1} \cdot e_1 - r_k \cdot e_1 | = \frac{2 |p_k|}{ | \xi^{(k)} | },
$$
which implies that $p_k=0$, and hence $p_{k+1}$ is 0 as well by \eqref{p-k-const}. We thus see that
(\ref{p-k-const}) in either case of the signs, forces equality within the first components of
$r_k$ and $r_{k+1}$. But in this case the same argument with $e_1$ replaced by the remaining vectors
of the standard basis of $\R^d$, would lead to equality 
for all corresponding components of $r_k$ and $r_{k+1}$.
The latter contradicts the fact that $r_k \neq r_{k+1}$ which we have from (\ref{xi-k}).
Hence the proof that $n$ is not a rational direction is complete.

We now set $\Lambda = \{\xi^{(k)}: \ k=1,2,... \}$ and proceed to the proof of the claimed estimate of the lemma.
By orthogonality of $M$ for any $\xi \in \Lambda $ we have
$$
|\xi|^2 = |M^T \xi|^2  = | N^T \xi |^2 + |n\cdot \xi|^2,
$$
which, combined with Cauchy-Schwarz, implies
\begin{equation}\label{N-T-square}
| N^T \xi |^2 = | \xi |^2 - |n\cdot \xi|^2  =  \big(| \xi |  + |n\cdot \xi| \big) \big( | \xi |  - |n\cdot \xi| \big) \leq 
2 | \xi |^2 \left(  1-  \frac{|n\cdot \xi|}{|\xi|}  \right).
\end{equation}
Now choose $k\in \N$ such that $\xi=\xi^{(k)}$. We get
$$
\xi \cdot n =  \xi^{(k)}  \cdot n =  \xi^{(k)} \cdot \left[  \frac{\xi^{(k)}}{ |\xi^{(k)}|  }  +  \sum\limits_{j=k}^\infty (r_{j+1} - r_j )  \right] =
|\xi^{(k)}| + \sum\limits_{j=k}^\infty \xi^{(k)} \cdot (r_{j+1} - r_j )  .
$$
Hence by (\ref{xi-k}) we have
$$
\big| \xi^{(k)}  \cdot n  -  |\xi^{(k)}|  \big| \leq \sum\limits_{j=k}^\infty | \xi^{(k)}| | r_{j+1}  -r_j | \leq 
\sum\limits_{j=k}^\infty | \xi^{(k)}|  \frac{\omega^2(|\xi^{(j)}|)}{ 10^j | \xi^{(j)} |^2 } \leq \frac{1}{2}\frac{\omega^2(|\xi^{(k)}|) }{|\xi^{(k)}|}.
$$
We thus get
$$
\left|  1-  \frac{|n\cdot \xi^{(k)}|}{|\xi^{(k)}|}  \right| \leq \frac{\omega^2(|\xi^{(k)}|) }{2 |\xi^{(k)}|^2}.
$$
From here, getting back to (\ref{N-T-square}) we obtain
$$
| N^T \xi |^2 \leq \omega^2(|\xi|),
$$
completing the proof of the lemma.
\end{proof}

The following remarks will be used later on.
\begin{remark}\label{rem-large-gap}
One may easily observe from the proof and the density of rational directions, that given any $\tau>1$ it is possible to construct $\Lambda$ such that
for any $\xi,\eta \in \Lambda $ if $|\xi|<|\eta|$ then $\tau |\xi|<|\eta|$. 
\end{remark}

\begin{remark}\label{rem-bad-normals-dense-measure-0}
The set of normal directions satisfying Lemma \ref{Lem-bad-normals} is dense on $\mathbb{S}^{d-1}$, however it necessarily has measure 0
if $\omega$ decreases faster than any polynomial rate. 
Density follows from the proof of the lemma,
as there one may start the construction in the neighbourhood of any rational direction on $\mathbb{S}^{d-1}$ instead of $e_1$. 
The measure zero claim is due to the fact that the set of Diophantine directions, in a sense of (\ref{class-diophantine}),
has full measure on $\mathbb{S}^{d-1}$,
and if $\omega$ decreases sufficiently fast, then any Diophantine direction fails to satisfy Lemma \ref{Lem-bad-normals}.
\end{remark}

We now give a proof of Proposition \ref{Thm-slow-conv} based on the previous lemma.

\bigskip

\begin{proof}[Proof of Proposition \ref{Thm-slow-conv}]
Define $\omega_1(t): [1,\infty) \to \R_+$ as follows
$$
\omega_1(t) = \frac{1}{ 4 \pi \omega^{-1} \left( \frac{1}{e}  t^{-2t}  \right) }, \qquad t\geq 1.
$$
Here $\omega^{-1}$ stands for the inverse function of $\omega$, which exists since $\omega$ is one-to-one.
Moreover, without loss of generality we will assume that $\omega_1$ is well-defined for $t\geq 1$,
since otherwise we will just replace the lower bound of $t$ by a sufficiently large number.
It is easy to see from the definition of $\omega_1(t)$ that it is continuous, decreases to 0 as $t \to \infty$ and 
is one-to-one. We now apply Lemma \ref{Lem-bad-normals} for this choice of $\omega_1$ as a modulus of continuity,
and let $n$ be the normal, and $\Lambda$ be the index set given by Lemma \ref{Lem-bad-normals}.
We define $v_0\in C^\infty(\mathbb{T}^d)$ as follows. First arrange elements of $\Lambda$
in increasing order of their norms, i.e. we let $\Lambda = \{ \xi^{(k)}: \ k=1,2,... \}$,
where $| \xi^{(k)}|< | \xi^{(k+1)} |$, for $k=1,2,...$ . Observe that $|\xi^{(k)}|\geq k$ for all $k\in \N$
due to the construction of Lemma \ref{Lem-bad-normals}.
For $\xi \in \Z^d$ we let $c_\xi$ be the $\xi$-th Fourier coefficient of $v_0$.
Next, if $\xi \in \Lambda$ is the $k$-th element of $\Lambda$ according to the mentioned arrangement, set $c_{ \pm \xi}(v_0)= |\xi|^{-k}$,
otherwise, if $\pm \xi \notin \Lambda$ let $c_{\xi}(v_0)=0$.
The sequence of coefficients constructed in this way defines a smooth function $v_0$, since the Fourier coefficients of $v_0$ decay
faster than any polynomial rate. Furthermore, as $c_{-\xi} = c_\xi \in \R$, $v_0$ is a real-valued function.

By (\ref{Parseval}) we get
$$
\mathcal{S}(t; v_0) = \sum\limits_{k=1}^\infty \frac{2}{ |\xi^{(k)}|^{2k}  } e^{ -4\pi |N^T \xi^{(k)}| t }.
$$
For each $k\in \N$, choose $t_k$ from the condition that 
\begin{equation}\label{g-t-k}
\omega(t_k) = \frac{1}{e} |\xi^{(k)}|^{-2k}, \qquad k=1,2,... \ .
\end{equation}
By construction we have $t_k \nearrow \infty$. To prove that 
$\mathcal{S}(t_k; v_0) \geq \omega(t_k) $ it is enough to show that $ e^{ -4\pi |N^T \xi^{(k)}| t_k }  |\xi^{(k)}|^{-2k}  \geq \omega(t_k)  $,
while for this one it is sufficient to prove
\begin{equation}\label{inq-with-omega}
e^{ -4\pi \omega_1 (|\xi^{(k)}|) t_k }  |\xi^{(k)}|^{-2k}  \geq \omega(t_k)  ,
\end{equation}
since $ |N^T \xi^{(k)}|  \leq \omega_1(|\xi^{(k)}| )$ by Lemma \ref{Lem-bad-normals}. We now use (\ref{g-t-k}) and the definition of $\omega_1$,
by which (\ref{inq-with-omega}) is equivalent to 
\begin{multline*}
1 \geq 4\pi \omega_1(|\xi^{(k)}|) t_k = 4\pi \omega_1(|\xi^{(k)}| ) \omega^{-1} \left(  \frac{1}{e}  |\xi^{(k)}|^{-2k} \right) = \\
4\pi \frac{1}{ 4 \pi \omega^{-1} \left( \frac 1e  | \xi^{(k)}|^{-2 |\xi^{(k)}| }  \right) } \omega^{-1} \left(  \frac{1}{e}  |\xi^{(k)}|^{-2k} \right) .
\end{multline*}
But the last expression is equivalent to 
$$
 \omega^{-1} \left( \frac 1e  | \xi^{(k)}|^{-2 |\xi^{(k)}| }  \right) \geq  \omega^{-1} \left(  \frac{1}{e}  |\xi^{(k)}|^{-2k} \right),
$$
which holds true, since $| \xi^{(k)}|\geq k $ and $\omega^{-1}$ is decreasing.

The proof of the proposition is complete.

\end{proof}

\begin{remark}\label{rem1}
It is clear from the proof of Proposition \ref{Thm-slow-conv} that given any $\delta>0$ in advance,
we may drop some finite number of initial terms from $\Lambda \subset \Z^d$, the Fourier spectrum of $v_0$,
ensuring that $|N^T \xi| \leq \delta $ for all $\xi \in \Lambda$. 
\end{remark}

\begin{remark}
By the same way as in Remark \ref{rem-bad-normals-dense-measure-0} we may argue that the set of normals
with the property as discussed in Proposition \ref{Thm-slow-conv}, is dense on the unit sphere,
however with measure zero if $\omega$ has a sufficiently slow decay at infinity.
It is also clear that any prescribed lower bound on the sequence $|N^T \xi|$
for non-zero $\xi\in \Z^d$ by a given function of $|\xi|$
would transform to a certain upper bound on the speed of convergence of $V$ to its tail.
\end{remark}

\vspace{0.2cm}

\begin{proof}[Proof of Theorem \ref{Thm-slow-conv-Laplace}]
Let a unit vector $n\notin \R \Q^d$, a $\Z^d$-periodic function $v_0$
and a sequence of positive numbers $\{t_k\}_{k=1}^\infty$ be obtained by applying Proposition \ref{Thm-slow-conv}
for the modulus of continuity $\omega$ given in Theorem \ref{Thm-slow-conv-Laplace}. 
Also let $V$ be defined by \eqref{V-series} for this choice of $v_0$. As we have seen in (\ref{v-in-terms-of-V}) the unique solution $v$ to
problem \eqref{u-on-Omega-n} is given by
$ v(y)  = V(Nz', z_d)$, where as usual $z\in \R^d_+ $ and $Mz=y\in \Omega_n$ with $M=[N|n]\in \OO(d)$.
By orthogonality of $M$ we have
$$
y \cdot n = Mz \cdot n = z\cdot M^T n = z \cdot e_d = z_d.
$$
Thus if we let $y=y' + (y\cdot n) n$, with $y'\in \partial \Omega_n$, then $Nz' = y'$ for the tangential component.
We now need to derive some bounds on the new
tangential variable $z'$. Observe that $N$ has rank $d-1$, hence $d-1$ of its rows are linearly independent.
Set $N'$ to be a $(d-1)\times (d-1)$ matrix formed by these $d-1$ rows of $N$.
From the overdetermined linear system $Nz' = y'$ we have $N' z' = y''$, where 
$y''\in \R^{d-1}$ is the corresponding part of $y'$. Using the assumption that $|y'|\leq R$,
we get the following bound
\begin{equation}\label{z-est}
|z'|= | (N')^{-1} y'' | \leq c_N |y| \leq c_N R.
\end{equation}
Now if $\Lambda\subset \Z^d$ is the Fourier spectrum of $v_0$, by Remark \ref{rem1} we may assume that
$|N^T \xi| \leq 1/ (8 c_N R )$, for any $\xi \in \Lambda$, from which and (\ref{z-est}) one gets
$$
|\xi \cdot Nz' | = | N^T \xi \cdot z' | \leq \frac{1}{8 c_N R}  c_N R = \frac 18.
$$
The last expression shows that $\cos (  2\pi \xi \cdot Nz' ) \geq \sqrt{2}/2$ for all $\xi \in \Lambda$ and any $z'$
satisfying \eqref{z-est}. By construction of Proposition \ref{Thm-slow-conv}, $\Lambda$ is symmetric with respect to the origin,
also $c_\xi (v_0) = c_{-\xi}(v_0)$ for any $\xi \in \Lambda$, and all non-zero coefficients of $V$ are positive and do not exceed 1. 
Hence for any $y'\in \partial \Omega_n \cap B(0,R)$ we get 
\begin{multline}
v( y' + (y\cdot n) n) = V(Nz', z_d) = \sum\limits_{ \xi \in \Lambda } c_\xi(v_0) e^{  -2\pi |N^T \xi| z_d } e^{ 2\pi i \xi \cdot Nz' }= \\
\sum\limits_{\xi \in \Lambda} c_\xi(v_0) e^{  -2\pi |N^T \xi| z_d } \cos( 2\pi  \xi \cdot Nz' ) 
\geq \frac{1}{\sqrt{2}} \mathcal{S}(z_d; v_0), 
\end{multline}
where $\mathcal{S}$ is defined by \eqref{Parseval}. Finally, recall that $y \cdot n = z_d$, and hence
in the last inequality restricting $z_d  $ to the sequence $\{t_k\}_{k=1}^\infty$, and using the estimate of Proposition \ref{Thm-slow-conv}
we complete the proof of Theorem \ref{Thm-slow-conv-Laplace}.
\end{proof}

It follows from the proof of Theorem \ref{Thm-slow-conv-Laplace} that we may have local uniformity for slow convergence
with respect to tangential directions, meaning that the sequence on which the convergence is slow,
once chosen for the modulus of continuity $\omega$, can be used for any $R>0$.
The only difference is that in this case one should start at a very large index (depending on $R$) in the sequence.


\section{Variable coefficients}\label{sec-variable-coeff}

In this section we prove Theorem \ref{Thm-slow-variable} for coefficient matrix $A$ satisfying (\ref{cond-on-A}).
Recall, that the previous section established the slow convergence phenomenon for Laplace operator.
The main (and big) point that makes Laplacian special in the analysis is
that one may write the solution to reduced problems (\ref{V-of-u})
explicitly. However, for variable coefficient case one does not possess
explicit forms for the solutions, which necessitates a rather different approach. 

We start with some preliminary set-up.
For coefficient matrix $A$ by $A^*$ denote the coefficient matrix of the adjoint operator, i.e. $A^{*, \alpha \beta} = A^{\beta \alpha}$.
For $1\leq \gamma \leq d$ we let $  \chi^{*, \gamma}$ be the smooth solution to the following cell-problem
\begin{equation}\label{cell-problem}
\begin{cases} -\nabla_y \cdot A^*(y) \nabla_y  \chi^{*, \gamma }(y)  = \partial_{y_\alpha} A^{*, \alpha \gamma }    ,&\text{  $ y  \in  \mathbb{T}^d $}, \\
 \int_{ \mathbb{T}^d }  \chi^{*, \gamma} (y) dy =  0   .    \end{cases}
\end{equation}
Next, by $v_n^{*,\gamma}$ we denote the solution, in a sense of Theorem \ref{Thm-Prange}, to the boundary layer problem
\begin{equation}\label{bdry-layer-system-for-v-star}
\begin{cases} 
- \nabla \cdot A(y) \nabla v_n^{*,\gamma} (y) =0 ,&\text{  $y \in \Omega_n$}, \\
 v_n^{*,\gamma}(y)=-\chi^{*,\gamma}(y) ,&\text{  $ y \in \partial \Omega_n  $.}    \end{cases}
\end{equation}
Finally we recall the notion of the Green's kernel.
For a coefficient matrix $A$ and a halfspace $\Omega \subset \R^d$,
the Green's kernel $G=G(y,\yy)$ corresponding to the operator $-\nabla \cdot  A(y) \nabla$ in domain $\Omega$
is a function satisfying the following elliptic equation
\begin{equation}\label{Green-main-half-space}
\begin{cases} -\nabla_{y} \cdot A(y)  \nabla_{y} G(y, \yy) =\delta(y - \yy) ,&\text{  $y \in \Omega $}, \\
 G(y, \yy) =0 ,&\text{  $ y \in \partial \Omega  $,}    \end{cases}
\end{equation}
for any $\yy\in \Omega$, where $\delta$ is the Dirac distribution.
To have a quick reference to this situation, we will say that $G$ is the Green's kernel for \emph{the pair} $(A, \Omega)$.
Note, that the definition of $G$ does not require $A$ to be periodic.
The existence and uniqueness of the Green's kernel for divergence type elliptic systems in halfspaces is proved
in \cite[Theorem 5.4]{Hofmann-Kim} for $d\geq 3$, and for 2-dimensional case in \cite[Theorem 2.21]{Dong-Kim}.
Here we will only use the case of scalar equations.
It is also shown that if $G^*$ is the Green's kernel for the pair $(A^*, \Omega)$ then one has the symmetry relation
\begin{equation}\label{symm-of-Green-transp}
 G(y,\yy)  = G^*(\yy, y), \qquad y,\yy \in \Omega \  \text{ with } y\neq \yy.
\end{equation}
From here we see that Green's kernel has zero boundary values with respect to both variables.

For a unit vector $n \notin \R \Q^d$ and a smooth function $v_0$ let
$v$ be the solution to (\ref{bdry-layer-system-recall}) in a sense of Theorem \ref{Thm-Prange}.
Set $\lambda: = y \cdot n$ for $y\in \Omega_n$, and let $M\in \OO(d)$
satisfy $M e_d = n$. Then for any $0<\kappa < 1/(2d)$ we have
\begin{multline}\label{bdry-layer-sol-repr1}
v(y) = \int_{ \partial \R^d_+} \partial_{2,\alpha} G^n (n, Mz) \times \bigg[ A^{\beta \alpha} (\lambda M z ) n_\beta + \\
 A^{\beta \gamma} (\lambda Mz) \left( \partial_{y_\beta} \chi^{*, \alpha} (\lambda Mz)  
 +  \partial_{y_\beta} v_n^{*,\alpha} ( \lambda M z) \right) n_\gamma \bigg] v_0(\lambda M z) d \sigma(z) +
  O(\lambda^{-\kappa}),
\end{multline}
where $G^n$ is the Green's kernel for the pair $(A^0, \Omega_n)$, and $A^0$ denotes
the matrix of coefficients of the homogenized operator corresponding to $-\nabla_y \cdot A(y) \nabla_y$.
Also $\partial_{2, \alpha} $ denotes differentiation with respect to the $\alpha$-th coordinate of the second variable of $G^n$,
and the error term $O(\lambda^{-\kappa})$ is locally uniform in tangential variable $y': = y- (y\cdot n)n$,
and is independent of the matrix $M$.
The asymptotic formula (\ref{bdry-layer-sol-repr1}) is proved by Prange in \cite[Section 6]{Prange}
for systems of equations. Here, since we are working with scalar equations, we have a slightly simpler form of it.

We are going to switch from the differentiation in $y$ to $z$-variable.
Since $y=Mz$ and $M$ is orthogonal, it is easy to see that 
\begin{equation}\label{from-y-to-z}
\nabla_y = M \nabla_z.
\end{equation}
Set $\boldsymbol{\chi}^{*,\alpha}(z) : = \chi^{*,\alpha}(Mz)$ and  $\textbf{v}_n^{*,\alpha}(z) := v_n^{*,\alpha}(Mz)$.
On the boundary of $ \R^d_+$, for each $1\leq \alpha \leq d$
we have $\boldsymbol{\chi}^{*,\alpha} + \textbf{v}_n^{*,\alpha} = 0$ by (\ref{bdry-layer-system-for-v-star}),
hence taking into account the relation (\ref{from-y-to-z}) and the fact that $M$ has $n$ as its last column, we obtain
$$
\partial_{y_\beta} ( \chi^{*,\beta} + v_n^{*,\beta} )(y)  = n_\beta \partial_{z_d} (\boldsymbol{\chi}^{*,\beta} + \textbf{v}_n^{*,\beta})(z) .
$$
Since the Green's kernel has zero boundary data,
using (\ref{from-y-to-z}) we have that
if $\partial \Omega_n \ni y =Mz $, with $z\in \partial \R^d_+$ then
\begin{equation}\label{green-deriv-rel}
\partial_{y_\alpha} G^n(n,y)=  \partial_{2,\alpha} G^n (n, Mz) = n_\alpha \partial_{z_d} G^{0,n} (e_d, z),
\end{equation}
where $G^{0,n}$ is the Green's kernel for the pair $(M^T A^0 M, \R^d_+)$. Observe that $G^{0,n}$
depends on the matrix $M$, however for the sake of notation we suppress this dependence
in the formulation. 
For now, dependence of $G^{0,n}$ on $M$ plays no role, but later on we will need to make a specific choice of orthogonal matrices $M$.

Applying these observations in (\ref{bdry-layer-sol-repr1}) and grouping similar terms we obtain
\begin{multline}\label{bdry-layer-sol-repr2}
\ \  v(y)  =  
 \int_{\R^{d-1}} \partial_{2,d} G^{0,n} (e_d, (z',0) ) \times n^T A( \lambda M(z',0) ) n \times  \\
 \bigg[ 1+   n_\alpha \big( \partial_{z_d} \boldsymbol{\chi}^{*, \alpha} (\lambda (z',0))  + 
 \partial_{z_d} \textbf{v}_n^{*,\alpha} ( \lambda  (z',0)) \big) \bigg] \times v_0(\lambda M (z',0) ) d z' +   O(\lambda^{-\kappa}).
\end{multline}

The next lemma concerns a particular class of integrals of type (\ref{bdry-layer-sol-repr2}).

\begin{lem}\label{lem-quasi-per-my}{\normalfont(see \cite[Lemma 2.3]{A})}
Assume $H:\R^d \to \R$ is smooth $\Z^d$-periodic function, $n \notin \R\Q^d$ is a unit vector and $M\in \OO(d)$ 
satisfies $M e_d = n$. Then for $h(z') := H(M(z', 0))$, where $z' \in \R^{d-1}$ and any $F\in L^1(\R^{d-1})$ one has
$$
 \lim\limits_{\lambda \to \infty} \int_{\R^{d-1}} F(z') h(\lambda z') dz' = c_0(H) \int_{\R^{d-1}} F(x) dx, 
$$
where $c_0(H)=\int_{\mathbb{T}^d } H(y) dy$.
\end{lem}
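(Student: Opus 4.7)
The plan is to Fourier-expand $H$ on the torus, use the orthogonality of $M$ to write $h(\lambda z')$ as a superposition of pure exponentials in $z'$, and then combine the Riemann--Lebesgue lemma with dominated convergence on the Fourier modes to isolate the zeroth frequency.

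First, since $H \in C^\infty(\mathbb{T}^d)$, I would write $H(y) = \sum_{\xi \in \Z^d} c_\xi(H) e^{2\pi i \xi \cdot y}$ with coefficients decaying faster than any polynomial, so the series converges absolutely and uniformly. Using $M = [N \mid n]$, for any $z' \in \R^{d-1}$ one has $\xi \cdot M(z', 0) = (M^T \xi) \cdot (z', 0) = (N^T \xi) \cdot z'$, so
$$
h(\lambda z') = \sum_{\xi \in \Z^d} c_\xi(H) \, e^{2\pi i \lambda (N^T \xi) \cdot z'}.
$$
Multiplying by $F \in L^1(\R^{d-1})$ and integrating, the uniform absolute convergence of the $\xi$-series combined with $F \in L^1$ justifies exchanging sum and integral, giving
$$
\int_{\R^{d-1}} F(z') h(\lambda z') \, dz' = c_0(H) \int_{\R^{d-1}} F(x) \, dx + \sum_{\xi \neq 0} c_\xi(H) \widehat{F}(-\lambda N^T \xi),
$$
with the convention $\widehat{F}(\eta) := \int_{\R^{d-1}} F(z') e^{2\pi i \eta \cdot z'} \, dz'$.

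The crucial step is to verify that $N^T \xi \neq 0$ for every nonzero $\xi \in \Z^d$: the columns of $N$ span the hyperplane $n^\perp$, so $N^T \xi = 0$ would force $\xi$ to be parallel to $n$, making $n = \pm \xi/|\xi|$ a rational direction and contradicting $n \notin \R\Q^d$. Hence for each $\xi \neq 0$ we have $|\lambda N^T \xi| \to \infty$ as $\lambda \to \infty$, and the Riemann--Lebesgue lemma gives $\widehat{F}(-\lambda N^T \xi) \to 0$ termwise. Passing the limit through the sum is then justified by dominated convergence against counting measure on $\Z^d \setminus \{0\}$, using the uniform bound $|\widehat{F}| \leq \|F\|_{L^1}$ and the summability $\sum_\xi |c_\xi(H)| < \infty$ coming from smoothness of $H$. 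This kills the entire $\xi \neq 0$ tail and leaves exactly the claimed value $c_0(H) \int F \, dx$.

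The only nontrivial point in the argument is the irrationality-based identification $N^T \xi \neq 0$ for $\xi \neq 0$; everything else is a standard combination of smoothness-induced Fourier decay, Riemann--Lebesgue, and dominated convergence on the spectrum. No uniformity in $M$ or $n$ is needed, which is what makes the simple termwise Riemann--Lebesgue argument sufficient.
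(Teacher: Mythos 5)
Your argument is correct and complete: the Fourier expansion of $H$, the identification $\xi\cdot M(z',0)=(N^T\xi)\cdot z'$, the observation that $N^T\xi\neq 0$ for $\xi\neq 0$ precisely because $n\notin\R\Q^d$, and the combination of Riemann--Lebesgue with dominated convergence over the spectrum are all sound. The paper itself does not reprove this lemma but defers to \cite[Lemma 2.3]{A}, which works with exactly such expansions into series of exponentials, so your proof is essentially the intended one.
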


This lemma is proved in \cite{A} for functions admitting a certain type of expansion
into series of exponentials. To obtain the current version, one can take the matrix $T$
in Lemma 2.3 of \cite{A} to be the identity.

In the next statement we collect the necessary information concerning the Green's kernel involved in (\ref{bdry-layer-sol-repr2}).

\begin{lem}\label{Lem-Green-props-for-slow}
For any $n\in \mathbb{S}^{d-1}$ and any $M \in \OO(d)$ satisfying $M e_d = n$ let 
$G^{0,n}_M (z, \zz)$ be the Green's kernel for the pair $(M^T A^0 M, \R^d_+)$.
Set $f_{n,M} (z') := \partial_{2,d} G^{0,n}_M (e_d, (z',0))$, where $z' \in \R^{d-1}$.
Then
\vspace{0.1cm}
\begin{itemize}
 \item[{\normalfont(i)}] $ f_{n,M} \in L^1(\R^{d-1}) $ \  and \ 
 $ \inf\limits_{n, M} \left| \int_{\R^{d-1}} f_{n,M}(z') dz' \right| > 0 $,
\vspace{0.1cm}
\item[{\normalfont(ii)}]
 $\sup\limits_{n,M } || f_{n,M} ||_{L^1(\R^{d-1})}<\infty$ \ and \
$\sup\limits_{ n,M  } \int\limits_{|z'|\geq A} |f_{n,M}(z')| dz' \to 0 \text{ as } A \to \infty$,
\vspace{0.1cm}
\item[{\normalfont(iii)}] $f_{n,M} \in C^1(\R^{d-1})$ \ and \ 
$\sup\limits_{n,M} || \nabla' f_{n,M} ||_{L^1(\R^{d-1})}<\infty$,
where $\nabla'$ is the gradient in $\R^{d-1}$.

\end{itemize}
\end{lem}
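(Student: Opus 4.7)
The plan is to identify $f_{n,M}$, up to a uniformly positive and uniformly bounded normalization, with the Poisson kernel at $e_d$ of the constant-coefficient elliptic operator $L := -\nabla\cdot (M^T A^0 M)\nabla$ on $\R^d_+$, and then to read off all three claims from classical properties of such Poisson kernels. The crucial point throughout is that $M \in \OO(d)$, so $M^T A^0 M$ is uniformly elliptic with bounds depending only on those of $A^0$; consequently every constant appearing below will be independent of $(n,M)$.

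Set $(M^T A^0 M)^{dd} = e_d^T M^T A^0 M e_d = n^T A^0 n \in [\lambda_0, \Lambda_0]$ with $\lambda_0, \Lambda_0 > 0$ depending only on the ellipticity of $A^0$. Applying Green's identity to $u$ solving $Lu = 0$ in $\R^d_+$ with $u|_{\partial\R^d_+} = g$ and to $v(y) := G^{0,n}_M(e_d, y)$, and using that tangential derivatives of $v$ vanish on $\partial\R^d_+$ (since $v$ itself does), one is led to the Poisson representation
\[
u(e_d) \;=\; (n^T A^0 n)\int_{\R^{d-1}} g(y')\, f_{n,M}(y')\, dy'.
\]
Thus $P(y') := (n^T A^0 n) f_{n,M}(y')$ is the Poisson kernel of $L$ at the base point $e_d$. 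The maximum principle gives $P \ge 0$, and testing the representation on $u \equiv 1$ gives $\int_{\R^{d-1}} P(y')\, dy' = 1$. Hence
\[
\int_{\R^{d-1}} f_{n,M}(y')\, dy' \;=\; \|f_{n,M}\|_{L^1(\R^{d-1})} \;=\; \frac{1}{n^T A^0 n} \;\in\; [\Lambda_0^{-1}, \lambda_0^{-1}],
\]
which settles (i) together with the $L^1$-boundedness required in (ii).

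For the remaining assertions I would invoke the classical pointwise decay estimates
\[
\bigl|\nabla^k_{z'} f_{n,M}(z')\bigr| \;\le\; \frac{C_k}{(1+|z'|)^{d+k}}, \qquad k = 0, 1,
\]
valid for the Poisson kernel at the interior point $e_d$ of any constant-coefficient uniformly elliptic operator on $\R^d_+$, with $C_k$ depending only on the ellipticity of $M^T A^0 M$. These bounds can be obtained from the tangential-Fourier representation $\widehat{P}(\xi', z_d) = e^{-p(\xi')z_d}$, where $p$ is the characteristic root with $\operatorname{Re} p(\xi') \gtrsim |\xi'|$, by integration by parts in the inverse Fourier transform; or equivalently from the scaling invariance of the Green's function combined with standard interior and boundary $C^{1,\alpha}$ estimates for Green's functions of uniformly elliptic operators. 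Integrating these bounds yields both the uniform vanishing of tails in (ii) and the $L^1$-control of $\nabla' f_{n,M}$ in (iii). The main obstacle is the uniformity of constants in $(n,M)$, and this is precisely what the orthogonality $M \in \OO(d)$ guarantees: $M^T A^0 M$ inherits the ellipticity bounds of $A^0$ independently of $(n,M)$, so the whole family of Poisson kernels lives in a compact region of the space of constant-coefficient uniformly elliptic operators on which the decay constants $C_k$ and the normalization $n^T A^0 n$ admit the required uniform bounds.
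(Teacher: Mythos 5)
Your argument is correct, and for part (i) it coincides with the paper's: both identify $(n^TA^0n)\,f_{n,M}$ (up to the orientation of the normal) with the Poisson kernel of the constant-coefficient operator at $e_d$, use that its total mass is $1$, and bound $n^TA^0n$ by the ellipticity of $A^0$, with uniformity in $(n,M)$ coming from the orthogonality of $M$. Where you diverge is in the remaining estimates. The paper does not use positivity of the Poisson kernel at all: it gets $f_{n,M}\in L^1$, the uniform $L^1$ bound, and the tail decay from the pointwise Green's function estimate $|G^{0,n}_M(z,\zz)|\le C z_d\zz_d/|z-\zz|^d$ (imported from G\'erard-Varet--Masmoudi, with $C$ independent of $n$, and transferred to $\R^d_+$ via $G^{0,n}_M(z,\zz)=G^n(\cdot,\cdot)$ under the rotation), and it gets the gradient bound in (iii) by citing a derivative estimate from Schulze--Wildenhain. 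You instead observe that positivity of the scalar Poisson kernel gives $\|f_{n,M}\|_{L^1}=|\int f_{n,M}|=1/(n^TA^0n)$ exactly — a clean shortcut the paper doesn't exploit — and you obtain the pointwise decay $|\nabla^k f_{n,M}(z')|\lesssim (1+|z'|)^{-d-k}$, $k=0,1$, from the explicit structure of constant-coefficient half-space Poisson kernels (tangential Fourier representation, or affine reduction to the Laplacian plus scaling and Schauder estimates). Both routes are sound; yours is somewhat more self-contained since it avoids the external citations, at the cost of having to justify the classical decay bounds (which do hold uniformly over the compact class of operators $M^TA^0M$, as you note). Only a cosmetic caveat: be careful with the sign convention for the outward normal of $\R^d_+$ when writing $P=\pm(n^TA^0n)f_{n,M}$; since only $|\int f_{n,M}|$ and $\|f_{n,M}\|_{L^1}$ enter the lemma, this does not affect the conclusion.
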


\begin{proof} 
Recall that $G^{n}$ is the Green's kernel for the pair $(A^0, \Omega_n)$.
The following bound is proved in \cite[Lemma 2.5]{GM-Acta}
\begin{equation}\label{bound-on-Green-n}
 | G^{n} (y, \yy) | \leq C \frac{ (y \cdot n) (\yy \cdot n)}{|y-\yy|^d} , \qquad y\neq \yy \text{ in } \Omega_n,
\end{equation}
where the constant $C$ is independent of $n$. It is easy to observe (see e.g. \cite[Claim 3.1]{A}) that for any $M\in \OO(d)$
with $Me_d = n$ we have $G^{0,n}_M(z, \zz) = G^n (M^T z, M^T \zz) $ for $z \neq \zz$ in $\R^d_+$. From here
and (\ref{bound-on-Green-n}), along with the orthogonality of $M$ one has
\begin{equation}\label{bound-on-Green-0-n}
 | G^{0,n}_M(z,\zz) | = |G^n (M^T z, M^T \zz) |  \leq C  \frac{ (M^T z \cdot n) (M^T \zz \cdot n) }{|M^T z - M^T \zz|^d } = C \frac{z_d \zz_d}{|z-\zz|^d},
\end{equation}
for all $ z\neq \zz$ in $\R^d_+$, with constant $C$ as in (\ref{bound-on-Green-n});
in particular $C$ is independent of $n$ and $M$. Since $G^{0,n}$ has zero data on $\partial \R^d_+$, from 
(\ref{bound-on-Green-0-n}) one easily infers that 
$$
| f_{n,M} (z') | \leq  \frac{C}{|e_d - (z',0)|^d}, \qquad \forall z' \in \R^{d-1},
$$
which shows that $f_{n,M} \in L^1(\R^{d-1})$ as well as part (ii).
For the second statement of (i) let $P^{0,n}_M(z, \zz) $ be the Poisson kernel for the pair $(M^T A^0 M, \R^d_+)$. 
Then for $z \in \R^d_+$, and $\zz \in \partial \R^d_+$ we have
\begin{align*}
P^{0,n}_M (z, \zz) &= - e_d^T ( M^T A^0 M ) \nabla_{\zz} G^{0,n}_M (z, \zz) \\ &= -( M e_d )^T A^0 (M e_d) \partial_{2,d} G^{0,n}_M (z, \zz) \\ &=
- n^T A^0 n \partial_{\zz_d} G^{0,n}_M (z, \zz).
\end{align*}
The last expression, combined with the fact that\footnote{It is proved in \cite[Sectin 2.2]{GM-Acta} (see also \cite[Section 3.2]{Prange}) that the variational
solution to (\ref{bdry-layer-system-recall}) has an integral representation by Poisson's kernel.
Moreover, as long as the asymptotics of the solutions to (\ref{bdry-layer-system-recall}) far away from the boundary of the hyperplane is not concerned,
there are no restrictions imposed on the normal direction.
Since identical 1 is a solution to (\ref{bdry-layer-system-recall}) for a boundary data identically equal to 1,
we may represent this solution by Poisson's kernel,
which shows that the integral of Poisson's kernel is 1.
} 
$\int_{\partial \R^d_+} P^{0,n}_M (e_d, \zz) d \sigma(\zz) =1$,
along with the ellipticity of $A^0$
completes the proof of the second claim of (i).

Finally, for (iii) observe that since $G^{0,n}_M  $ solves an elliptic equation with constant coefficients,
by standard elliptic regularity we have $f_{n,M} \in C^1(\R^{d-1})$. 
For the growth estimate by 
\cite[V.4.2 Satz 3]{Schulze-Wildenhain} one has
$$
| \nabla' \big( \partial_{2,d} G^{0,n}_M (e_d, (z',0) ) \big) | \leq  \frac{C}{|e_d - (z',0)|^d}, \qquad \forall z' \in \R^{d-1},
$$
where $C$ is independent of the unit vector $n$ and the orthogonal matrix $M$.

The proof of the lemma is complete.
\end{proof}

We now turn to the discussion of the core of averaging process of
(\ref{bdry-layer-sol-repr2}).
Our next result is one of the key lemmas of the current paper.

\begin{lem}\label{Lem-slow-conv-family-of-F}
Let $\Xi$ be any non-empty set of indices 
and assume we are given a family of functions $\mathcal{F}=\{F_i\}_{i\in \Xi}$
with the following properties:
\vspace{0.1cm}
\begin{itemize}
 \item[{\normalfont(a)}] $F \in L^1(\R^{d-1})$ for any $F\in \mathcal{F}$ and  $ \inf\limits_{F \in \mathcal{F}} \left| \int_{\R^{d-1}} F (x) dx \right| >0 $,
\vspace{0.1cm} 
 \item[{\normalfont(b)}] $\sup\limits_{F \in \mathcal{F}} || F ||_{L^1(\R^{d-1})}<\infty$ and
$$
\sup\limits_{ F \in \mathcal{F} } \int_{|x|\geq A} |F(x)| dx \to 0 \text{ as } A \to \infty.
$$
\end{itemize}
Let also $\omega$ be any modulus of continuity
and $S_0 \subset \mathbb{S}^{d-1}$ be any open subset of the sphere. Then, there exists an irrational vector 
$n \in S_0$, an unbounded, and strictly increasing sequence
of positive numbers $\{\lambda_k\}_{k=1}^\infty$ such that
for any $F\in \mathcal{F} $ there is
a function $v_0\in C^\infty(\mathbb{T}^d)$ satisfying
\begin{equation}\label{est-of-lemma-slow}
\left|  \int_{\R^{d-1} } F(x) v_0(\lambda_k M(x,0)) dx - c_0(v_0) \int_{\R^{d-1}} F(x) dx   \right| \geq \omega(\lambda_k), \ \ k=1,2,... ,
\end{equation}
whenever $M\in \OO(d)$ and $M e_d = n$.

Assume in addition to {\normalfont(a)} and {\normalfont(b)} that $\mathcal{F}$ also satisfies
\vspace{0.1cm}
\begin{itemize}
 \item[{\normalfont(c)}] $F\in C^1(\R^{d-1})$ for any $F\in \mathcal{F}$
and $$\sup\limits_{F\in \mathcal{F}} ||\nabla F||_{L^1(\R^{d-1})} <\infty,$$
\end{itemize}
then the function $v_0$ too can be chosen independently of $F$.
\end{lem}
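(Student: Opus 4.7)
The plan is to reduce the inequality to a Fourier-analytic identity and then appeal to Lemma~\ref{Lem-bad-normals}. Writing $v_0(y)=\sum_{\xi} c_\xi(v_0)\,e^{2\pi i \xi\cdot y}$ and using $\xi\cdot M(x,0)=(N^T\xi)\cdot x$, one computes
\begin{equation*}
\int_{\R^{d-1}}F(x)\,v_0(\lambda M(x,0))\,dx\;-\;c_0(v_0)\int_{\R^{d-1}}F(x)\,dx\;=\;\sum_{\xi\neq 0}c_\xi(v_0)\,\widehat F(-\lambda N^T\xi),
\end{equation*}
where $\widehat F(\eta):=\int F(x)e^{-2\pi i \eta\cdot x}\,dx$. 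The task becomes to choose $n$, a spectrum $\Lambda\subset\Z^d$, real coefficients $c_\xi(v_0)$ supported on $\pm\Lambda$, and a sequence $\lambda_k\nearrow\infty$, so that at each scale $\lambda_k$ the single term indexed by $\xi^{(k)}\in\Lambda$ dominates the remainder and stays above $\omega(\lambda_k)$.

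Apply Lemma~\ref{Lem-bad-normals}, starting its inductive construction in the neighbourhood of a rational direction lying in $S_0$ (possible by density of $\R\Q^d\cap\Ss^{d-1}$) and with an auxiliary modulus $\omega_1$ that decays much faster than every quantity needed below; this yields an irrational $n\in S_0$ together with $\Lambda=\{\xi^{(k)}\}_{k\geq 1}$ satisfying $|N^T\xi^{(k)}|\leq\omega_1(|\xi^{(k)}|)$ uniformly over all $M$ with $Me_d=n$. Remark~\ref{rem-large-gap} also allows arbitrarily large gaps $|\xi^{(k+1)}|/|\xi^{(k)}|$. Define $v_0\in C^\infty(\T^d)$ by $c_{\pm\xi^{(k)}}(v_0)=a_k>0$, with $a_k$ decaying faster than any polynomial in $|\xi^{(k)}|$ (ensuring smoothness) and super-geometrically (e.g. $a_{k+1}\leq a_k^2$), and with all other non-zero-mode Fourier coefficients equal to zero. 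Pick $\lambda_k$ so that $\lambda_k|N^T\xi^{(k)}|$ equals a prescribed small constant $\delta$; since $|N^T\xi^{(k)}|$ itself is tiny, $\lambda_k\to\infty$. Uniform equicontinuity at the origin of $\{\widehat F\}_{F\in\mathcal{F}}$, which follows from hypothesis (b) via the standard split into $\{|x|\leq A\}$ and $\{|x|>A\}$, combined with (a), yields $|\widehat F(-\lambda_k N^T\xi^{(k)})|\geq\beta_0$ where $\beta_0:=\tfrac12\inf_{F\in\mathcal{F}}|\int F|>0$, so the $\pm\xi^{(k)}$ pair contributes at least $2\beta_0 a_k$ in absolute value.

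The residue $\sum_{j\neq k}a_j\widehat F(-\lambda_k N^T\xi^{(j)})$ is controlled in two pieces. The piece $j>k$ is bounded using the trivial estimate $|\widehat F|\leq\|F\|_{L^1}\leq C$ from (b) together with super-geometric decay of $\{a_j\}$, giving $\ll a_k$. The piece $j<k$ is the delicate one, since the coefficients $a_j$ there exceed $a_k$, so the cancellation must come from genuine decay of $\widehat F$ at the points $\lambda_k N^T\xi^{(j)}$. By tuning $\omega_1$ and the gap $|\xi^{(k+1)}|/|\xi^{(k)}|$ one arranges $|N^T\xi^{(k)}|$ to be drastically smaller than $|N^T\xi^{(j)}|$ for $j<k$, so the arguments $\lambda_k|N^T\xi^{(j)}|=\delta|N^T\xi^{(j)}|/|N^T\xi^{(k)}|$ become enormous. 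Under hypothesis (c), integration by parts gives the uniform bound $|\widehat F(\eta)|\leq C/|\eta|$, and the $j<k$ piece becomes $\lesssim\sum_{j<k}a_j|N^T\xi^{(k)}|/|N^T\xi^{(j)}|\ll a_k$; this estimate is independent of $F\in\mathcal{F}$, so one fixed $v_0$ works for the entire family. Without (c), $\widehat F$ still decays at infinity by Riemann--Lebesgue but with no uniform rate, so instead we construct $v_0$ inductively for the given $F$: at step $k$, with $a_1,\dots,a_{k-1}$ and $\lambda_1,\dots,\lambda_{k-1}$ already fixed, choose $\lambda_k$ so large that the $F$-specific decay of $\widehat F$ at the finitely many previous points $\lambda_k N^T\xi^{(j)}$, $j<k$, kills their contribution, and then pick $a_k$ to fit.

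Finally, matching the rate $\omega$ is done as in Proposition~\ref{Thm-slow-conv}: since $\omega_1$ is chosen after $\omega$, one forces $\omega(\lambda_k)\leq\beta_0 a_k$, delivering (\ref{est-of-lemma-slow}). The main obstacle is precisely the pre-tail $j<k$: the ``wrong-side'' coefficients $a_j$ are \emph{larger} than the coefficient at the active frequency, so one cannot simply absorb them by smallness of the Fourier data and must rely on decay of $\widehat F$ at large arguments. Hypothesis (c) delivers that decay quantitatively and uniformly in $F\in\mathcal{F}$, yielding a single $v_0$; without (c) one must settle for the individual, non-uniform Riemann--Lebesgue decay of each particular $F$, which forces $v_0$ to depend on $F$.
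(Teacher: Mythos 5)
Your Fourier-analytic reduction, the use of Lemma \ref{Lem-bad-normals} with an auxiliary modulus $\omega_1$ started inside $S_0$, the treatment of the tail $j>k$ via fast decay of the coefficients plus the gap condition of Remark \ref{rem-large-gap}, and the integration-by-parts argument under hypothesis (c) all track the paper's proof. The gap is in the main case, where only (a) and (b) are assumed. Your fallback for the pre-tail $j<k$ --- choose $\lambda_k$ so large that the $F$-specific Riemann--Lebesgue decay of $\widehat F$ at the finitely many previous points kills their contribution --- makes the sequence $\{\lambda_k\}$ depend on $F$, whereas the statement (and its applications, where $\mathcal F$ is an uncountable family of Green-kernel derivatives) requires $n$ and $\{\lambda_k\}$ to be uniform over $\mathcal F$; only $v_0$ may depend on $F$. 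Moreover, even for a single $F$ this choice conflicts with your other constraint on $\lambda_k$: to keep the active term of size $\geq \beta_0 a_k$ you need $\lambda_k\,|N^T\xi^{(k)}|\leq \delta$, so $\lambda_k$ is bounded above by $\delta/|N^T\xi^{(k)}|$, a number fixed once $n$ and $\Lambda$ are fixed; since (a)--(b) give no rate in Riemann--Lebesgue, the admissible window for $\lambda_k$ may be empty. You cannot repair this by choosing $\xi^{(k)}$ after $\lambda_k$, because that would make $\Lambda$, hence $n$, depend on $F$ as well.

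The idea you are missing is that under (a)--(b) one needs \emph{no} decay of $\widehat F$ on the pre-tail. The paper lets $v_0$ depend on $F$ only through signs: $c_{\pm\xi^{(k)}}(v_0)=\e_k(F)\,|\xi^{(k)}|^{-k}$ with $\e_k(F)\in\{-1,1\}$ chosen inductively. Writing $T_k$ for the active term at scale $\lambda_k$ and $\Sigma_1(\lambda_k)$ for the pre-tail (which involves only the coefficients of index $m<k$, already fixed at step $k$), the inequality $|\Sigma_1+T_k|+|\Sigma_1-T_k|\geq 2|T_k|$ shows that one of the two sign choices yields $|\Sigma_1\pm T_k|\geq |T_k|$, so the pre-tail cannot cancel the active term no matter how large it is. With this device $n$, $\Lambda$ and $\{\lambda_k\}$ remain uniform in $F$, and the whole point of hypothesis (c) is to remove the $F$-dependence of these signs (at which stage the paper also passes to a sparse subsequence $\{i_k\}$ of $\Lambda$ to make the integration-by-parts bound on $\Sigma_1$ summable --- a step your sketch elides by asserting control of the ratios $|N^T\xi^{(k)}|/|N^T\xi^{(j)}|$, for which Lemma \ref{Lem-bad-normals} supplies only upper, not lower, bounds).
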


\begin{remark}\label{rem-dense-normals-family-F}
Let us remark that the left-hand side of (\ref{est-of-lemma-slow}) decays as $k \to \infty$ in view of Lemma \ref{lem-quasi-per-my},
therefore the lower bound of the current lemma is non-trivial.
The Lemma shows that under conditions (a) and (b) only, the direction, and the sequence along which convergence is slow can be chosen
uniformly for the entire family $\mathcal{F}$. Moreover,
as will be seen from the proof of Lemma \ref{Lem-slow-conv-family-of-F}, for any $F$ and $G$ from $\mathcal{F}$ their corresponding functions $v_0(F)$ and $v_0(G)$
have equal up to the sign Fourier coefficients.
\end{remark}

\begin{proof}[Proof of Lemma \ref{Lem-slow-conv-family-of-F}]
For the sake of clarity we divide the proof into few steps.

\vspace{0.2cm}
\noindent \textbf{Step 1. Construction of $n$ and $\{\lambda_k\}_{k=1}^\infty$.}
We start by determining a suitable modulus of 
continuity for which we will apply Lemma \ref{Lem-bad-normals}
to get the normal $n$.

For $F \in \mathcal{F}$ let $ \mathcal{I}_F $ be the absolute
value of the integral of $F$ over $\R^{d-1}$, and set $\tau_0 : = \inf\limits_{F \in \mathcal{F}} \mathcal{I}_F   $.
By (a) and (b) we have $0<\tau_0 <\infty$.
It is easy to see using the fact that $\tau_0>0$ and condition (b) that there
exists $A_0>0$ large enough such that
\begin{equation}\label{est1-A-0-family}
\left| \int_{|x|\leq A_0}  F(x) dx \right| \geq 2 \int_{|x|\geq A_0} |F(x) | dx + \frac 12 \mathcal{I}_F,
\end{equation}
for any $F\in \mathcal{F}$. For this choice of $A_0$ denote $\e_F : = \frac{1}{||F||_{L^1(\R^{d-1})}} \left| \int_{|x|\leq A_0} F(x) dx \right| $,
where $F\in \mathcal{F}$. Since $\tau_0>0$ from (\ref{est1-A-0-family}) and condition (b) we have
\begin{equation}\label{e-0-def}
 0<\e_0:= \inf\limits_{F\in \mathcal{F}} \e_F \leq 1.
\end{equation}
We now fix some small constant $\delta_0>0$ such that
\begin{equation}\label{choice-of-delta-0}
 | \cos(t) - 1 |\leq \e_0 /4 \ \text{ for any } t\in \R \text{ with } |t|\leq \delta_0.
\end{equation}
Assume that $n\in \mathbb{S}^{d-1}$ and let $M\in \OO(d)$ be so that $M e_d = n$. We then have $M=[N|n]$ where
$N$ is the matrix formed from the first ($d-1$)-columns of $M$. Observe that for any $\xi\in \Z^d$ and any $x\in \R^{d-1}$
one has $\xi \cdot M(x,0) = x^T N^T \xi$. Therefore if for some $\lambda>0$ we have $2\pi \lambda A_0 |N^T \xi| \leq \delta_0$
then (\ref{est1-A-0-family}) and (\ref{choice-of-delta-0}) imply
\begin{align*}
\numberthis \label{est-with-38} \left| \int_{\R^{d-1}} F(x) \cos [  2\pi  \lambda \xi \cdot M(x,0) ] dx \right|\geq \left| \int_{|x|\leq A_0} F(x) dx \right| &- \\
 \int_{|x|\leq A_0} |F(x)| \times | \cos [2\pi \lambda x^T N^T \xi ] -1 | dx &- \int_{|x|>A_0} |F(x)| dx \geq \frac 38 \mathcal{I}_F,
\end{align*}
for any $F \in \mathcal{F}$.
Define 
$$
\omega_1(t) := \frac{\delta_0}{2\pi A_0} \frac{1}{\omega^{-1} \left( \frac 38  \tau_0 \frac{1}{t^t}  \right)  } , \qquad t\geq 1,
$$
where $\omega^{-1}$ stands for the inverse function of $\omega$. Obviously $\omega_1$ is one-to-one,
continuous, and decreases to 0 as $t\to \infty$. It is also clear that $\omega_1$ is well-defined
for large enough $t$, thus without loss of generality we will assume that $\omega_1$ is defined for all $t\geq 1$.
Applying Lemma \ref{Lem-bad-normals} for $\omega_1$ as a modulus of continuity we obtain $\Lambda\subset \Z^d$, and a unit vector $n\notin \R \Q^d$
such that if $M\in \OO(d)$ is any matrix satisfying $M e_d = n$, then
$$
|N^T \xi |\leq \omega_1(|\xi|), \qquad \forall \xi \in \Lambda,
$$
where, as is customary, $d\times (d-1)$ matrix $N$ is formed from the first ($d-1$)-columns of $M$. 
Following Remark \ref{rem-bad-normals-dense-measure-0} we may assume that $n\in S_0$.
We arrange
elements of $\Lambda$ in increasing order of their norms, thus
$\Lambda = \{ \xi^{(k)}: \ k=1,2,...   \}$, where by construction we have $k\leq |\xi^{(k)}|<|\xi^{(k+1)}|$
for any $k\geq 1$. Moreover, according to Remark \ref{rem-large-gap} we may also assume that for any $k\in \N$ we have
\begin{equation}\label{kappa-gap}
 |\xi^{(k)}| < \varrho |\xi^{(k+1)}|  ,
\end{equation}
where $0<\varrho<1$ is a fixed parameter satisfying
\begin{equation}\label{choice-of-kappa}
2 \sup\limits_{F \in \mathcal{F}}  || F||_{L^1(\R^{d-1})} \frac{\varrho}{1-\varrho} < \frac{3}{16} \tau_0.
\end{equation} 
Note that the supremum here is finite by (b) and is non-zero by (a).
Set
\begin{equation}\label{def-lambda}
 \lambda_k := \omega^{-1} \left( \frac 38 \tau_0 \frac{1}{|\xi^{(k)}|^k } \right), \qquad k=1,2,... \ .
\end{equation}
It is clear that $\lambda_k$ is unbounded and is strictly increasing. 
Observe that $n$, and the sequence $\{\lambda_k \}$ are uniform for the entire family $\mathcal{F}$. 

\vspace{0.2cm}
\noindent \textbf{Step 2. Construction of $v_0$ for fixed $F\in \mathcal{F}$.}
We proceed to construction of the function $v_0\in C^\infty(\mathbb{T}^d)$ for the given $F\in \mathcal{F}$,
for which it is enough to construct the sequence of Fourier coefficients of $v_0$, which
we will denote by $\{c_\xi(v_0)\}_{\xi \in \Z^d}$. 

Let $F\in \mathcal{F}$ be fixed. For $\xi \in \Z^d$ if we have
$ \xi \in \Lambda$ then set $c_\xi(v_0) = c_{-\xi}(v_0) = \e_k(F) |\xi|^{-k} $, where $k\in \N$
is the index of $\xi$ in $\Lambda$ according to the increasing rearrangement made above,
and $\e_k(F) \in \{-1,1 \}$ will be chosen below. It is important to note that this sign is the same for $c_\xi$ and $c_{-\xi}$.
Otherwise, if $ \pm \xi \notin \Lambda$ we let $c_\xi(v_0)=0$. Clearly the sequence $\{c_\xi\}$
decays faster than any polynomial rate in $|\xi|$, hence $v_0 $ is smooth. Also, since $c_{\xi}(v_0)= c_{-\xi}(v_0)$
for any $\xi \in \Z^d$ we have that $v_0$ is real-valued. Observe that $c_0(v_0)=0$ by construction, and expanding $v_0$
into Fourier series we get
\begin{multline}\label{exp-of-F-3}
 \int_{\R^{d-1}} F(x) v_0(\lambda M(x,0)) dx = \sum\limits_{m=1}^\infty \frac{2 \e_m(F)}{ |\xi^{(m)}|^m } 
 \int_{\R^{d-1}} F(x) \cos \big( 2\pi \lambda x^T N^T \xi^{(m)} \big) dx := \\
 \frac{2 \e_k(F)}{ |\xi^{(k)}|^k } \mathcal{I}_k(\lambda) + \Sigma_1(\lambda) + \Sigma_2(\lambda),
\end{multline}
where $k\geq 1$, $\mathcal{I}_k(\lambda):= \int_{\R^{d-1}} F(x) \cos( 2\pi \lambda x^T N^T \xi^{(k)} ) dx$,
$\Sigma_1(\lambda)$ contains the part of sum where $m<k$ and $\Sigma_2(\lambda)$ respectively
sums over the range $m>k$. In view of the construction the sums $\Sigma_i(\lambda)$, $i=1,2$ are real-valued
for any $\lambda$. By the definition of $\lambda_k$, the fact that $|\xi^{(k)}| \geq k $ and that $\omega$ is decreasing we easily see that
$ 2\pi \lambda_k A_0 |N^T \xi^{(k)}| \leq \delta_0 $ for any $k$, hence applying (\ref{est-with-38}) we obtain
\begin{equation}\label{est-with-34}
\frac{2  }{ |\xi^{(k)}|^k } | \mathcal{I}_k(\lambda_k) | \geq \frac{3}{4} \mathcal{I}_F \frac{1  }{ |\xi^{(k)}|^k }.
\end{equation}
On the other hand, by (\ref{kappa-gap}) and (\ref{choice-of-kappa}) we easily get
\begin{equation}\label{est-range2}
 | \Sigma_2(\lambda) | \leq 2|| F||_{L^1(\R^{d-1})} \sum\limits_{m=k+1}^\infty \frac{1}{|\xi^{(m)}|^m} 
 \leq \frac{3}{16} \frac{1}{|\xi^{(k)}|^k} \tau_0,
\end{equation}
for any $\lambda\geq 1$.
We now estimate the contribution of the range $m<k$. The triangle inequality implies
$$
\left| \Sigma_1(\lambda_k) + \frac{2  }{ |\xi^{(k)}|^k }   \mathcal{I}_k(\lambda_k)  \right| + 
\left| \Sigma_1(\lambda_k) - \frac{2  }{ |\xi^{(k)}|^k }   \mathcal{I}_k(\lambda_k)  \right| \geq \frac{4  }{ |\xi^{(k)}|^k }  | \mathcal{I}_k(\lambda_k)  |,
$$
hence at least one of the terms in left-hand side of the last inequality is not less than half of the right-hand side. Taking this into account,
we choose the sign $\e_k $ in order to get the largest term from the left-hand side of the above inequality. 
This choice of $\e_k$, combined with estimates (\ref{est-with-34}) and (\ref{est-range2}), and the definition of $\lambda_k$ given by (\ref{def-lambda}) yields
\begin{equation}
\left| \frac{2 \e_k(F)}{ |\xi^{(k)}|^k } \mathcal{I}_k(\lambda_k) + \Sigma_1(\lambda_k) + \Sigma_2(\lambda_k) \right|
\geq \frac{3}{8} \mathcal{I}_F \frac{1  }{ |\xi^{(k)}|^k } \geq \frac{3}{8} \tau_0 \frac{1  }{ |\xi^{(k)}|^k } =  \omega(\lambda_k),
\end{equation}
for any $k=1,2,...$ . The estimate (\ref{est-of-lemma-slow}) of the lemma obviously follows from the last inequality and (\ref{exp-of-F-3}).

\vspace{0.2cm}
\noindent \textbf{Step 3. Uniform choice of $v_0$.}
Lastly, we turn to the proof of possibility of a uniform choice of $v_0$ under additional condition (c).
There is no loss of generality to assume that 
\begin{equation}\label{assump-on-g}
t \omega(t) \to \infty   \text{ as } t\to \infty ,
\end{equation}
since otherwise we would simply replace $\omega$ by a new modulus of continuity $\widetilde{\omega}$,
where $\widetilde{\omega}(t)\geq \omega(t)$ for all $t\geq 1$ and $\widetilde{\omega}$ satisfies (\ref{assump-on-g}),
by that getting even a slower convergence.
Thus we will take (\ref{assump-on-g}) for granted.
For fixed $F \in \mathcal{F}$, $\xi\in \Z^d \setminus \{0\}$, and $\lambda>0$ set 
\begin{equation}\label{I-lambda-xi}
 I(\lambda; \xi) := \int_{\R^{d-1}} F(x) e^{2 \pi i \lambda x^T N^T \xi} dx.
\end{equation}
Let $1\leq k \leq d-1$ be such that the $k$-th component of the vector $N^T \xi $ is the largest by absolute value.
This choice implies $|(N^T \xi)\cdot e_k | \geq (d-1)^{-1/2} |N^T \xi |$, where $e_k$ is the $k$-th vector of the standard basis of $\R^{d-1}$.
Integrating by parts in $I(\lambda; \xi)$ in the direction of $e_k$ we see that 
\begin{equation}\label{bound-by-int-by-parts}
|I(\lambda; \xi)| \leq \frac{1}{\lambda} \frac{\sqrt{d-1}}{2\pi  |N^T \xi|} \sup\limits_{F\in \mathcal{F}} || \nabla F ||_{L^1(\R^{d-1})},
\end{equation}
where the supremum is finite due to the assumption (c).

By (\ref{def-lambda}) we have $|\xi^{(k)}|^k \omega(\lambda_k) =\frac 38 \tau_0$ for each $k\in \N$. 
Also, since $\lambda_k$ is increasing and unbounded by construction, from 
(\ref{assump-on-g}) we get $\lambda_k \omega(\lambda_k) \to \infty$ as $k \to \infty$.
Hence
\begin{equation}\label{lambda-k-vs-xi-k}
 \frac{\lambda_k}{ |\xi^{(k)}|^k} = : a_k \to \infty \text{ as } k \to \infty.
\end{equation}
We now choose an increasing sequence of integers $(i_k)_{k=1}^\infty$ where $i_1= 1$
and if for $k>1$,  $i_{k-1}$ is chosen, we use (\ref{lambda-k-vs-xi-k}) and take $i_k>i_{k-1}$ so large in order to get
\begin{equation}\label{z2}
\frac{1}{a_{i_k}} \frac{\sqrt{d-1}}{\pi  } \sup\limits_{F \in \mathcal{F}}|| \nabla F ||_{L^1(\R^{d-1})}
\sum\limits_{m=1}^{k-1}  \frac{1}{|\xi^{(i_m)}|^{i_m} } \frac{1}{|N^T \xi^{(i_m)}|} \leq \frac{3}{16} \tau_0.
\end{equation}
Clearly the choice of the sequence $(i_k)$ is independent of a particular $F$ since constants in (\ref{z2})
are uniform for the entire family $\mathcal{F}$. We define $v_0$ through its Fourier coefficients as follows.
If for $\xi\in \Z^d$ we have $\xi = \pm \xi^{ (i_k) }$ for some $k\in \N$ then define $c_\xi(v_0) = c_{-\xi}(v_0) = |\xi|^{-i_k}$,
otherwise, set $c_\xi(v_0)=0$. We have that $v_0$ is uniform for all $F\in \mathcal{F}$.
Observe also, that $v_0$ is simply the function from Step 2 with the difference that its
Fourier spectrum is now supported on the frequencies $\{\pm \xi^{ (i_k) }\}_{k=1}^\infty$
and all non-zero Fourier coefficients are positive. 

We now complete the proof by showing that $n$, $\{\lambda_{i_k}\}_{k=1}^\infty$
and $v_0$ defined above satisfy the Proposition. Plugging $v_0$ into (\ref{exp-of-F-3}),
for each integer $k\geq 1$ we get
\begin{equation}\label{int-of-F-uniform-v0}
\int_{\R^{d-1}} F(x) v_0( \lambda M(x,0) ) dx = \frac{2}{|\xi^{i_k}|^{i_k}} \mathcal{I}_{i_k} (\lambda) +
\Sigma_1(\lambda) + \Sigma_2(\lambda),
\end{equation}
where $\mathcal{I}_{i_k}$, $\Sigma_1$ and $\Sigma_2$ are defined as in (\ref{exp-of-F-3}).
We have
$$
\Sigma_1(\lambda ) = \sum\limits_{m=1}^{k-1} \frac{1}{|\xi^{(i_m)} |^{i_m}  } 
\left[ I(\lambda; \xi^{(i_m)} ) + I(\lambda; -\xi^{(i_m)} ) \right] ,
$$
with $I$ defined from (\ref{I-lambda-xi}). From this we obtain
\begin{multline}\label{est-range1-new}
| \Sigma_1(\lambda_{i_k} )| \stackrel{ (\ref{bound-by-int-by-parts}) }{\leq} \frac{1}{\lambda_{i_k}}
\frac{\sqrt{d-1}}{\pi  } \sup\limits_{F \in \mathcal{F}}|| \nabla F ||_{L^1(\R^{d-1})}
\sum\limits_{m=1}^{k-1}  \frac{1}{|\xi^{(i_m)}|^{i_m} } \frac{1}{|N^T \xi^{(i_m)}|} 
\stackrel{ (\ref{z2}) }{\leq} \\
\frac{1}{\lambda_{i_k}} \frac{3}{16} \tau_0 a_{i_k} 
\stackrel{ (\ref{lambda-k-vs-xi-k}) }{\leq}  \frac{3}{16} \frac{1}{ |\xi^{(i_k)}|^{i_k}  } \tau_0.
\end{multline}

For the subsequence $\{ \xi^{ (i_k)  } \}_{k=1}^\infty$ of $\Lambda$ the analogue of estimate
(\ref{est-range2}) becomes
\begin{multline}\label{est-range2-new}
 |\Sigma_2 ( \lambda) | \leq 2|| F||_{L^1(\R^{d-1})} \sum\limits_{m=k+1} \frac{1}{|\xi^{(i_m)}|^{i_m} } \leq 
 2|| F||_{L^1(\R^{d-1})} \sum\limits_{m=i_{k+1}}^\infty \frac{1}{|\xi^{(m)}|^m} \leq \\ 
 \frac{3}{16} \frac{1}{ |\xi^{ (i_{k+1}-1 ) }|^{ i_{k+1}-1} } \tau_0 \leq \frac{3}{16} 
 \frac{1}{ |\xi^{ (i_k ) }|^{ i_k } } \tau_0,
\end{multline}
where as before, we have used (\ref{kappa-gap}) and (\ref{choice-of-kappa}).
Finally, the lower bound on (\ref{int-of-F-uniform-v0})
of the Lemma follows by replacing $k$ with $i_k$ in (\ref{est-with-34})
and applying estimates (\ref{est-range1-new}) and (\ref{est-range2-new})
to (\ref{int-of-F-uniform-v0}).

The proof is now complete.
\end{proof}

Looking ahead let us remark here, that the importance of uniformity of the choices
in Lemma \ref{Lem-slow-conv-family-of-F} will prove crucial in the applications.
We now include a small modification of the previous lemma
to allow compactly supported functions, as well as shift of the origin in the function $v_0$. This situation emerges 
in applications of Lemma \ref{Lem-slow-conv-family-of-F} to integrals arising from Poisson kernels
corresponding to bounded domains.

\begin{lem}\label{Lem-family-on-compact-supp}
Keeping the notation of Lemma \ref{Lem-slow-conv-family-of-F}, and conditions (a) and (b) in force,
assume in addition that the family $\mathcal{F}$ 
has the following properties:
\vspace{0.1cm}

\begin{itemize}
  \item[{\normalfont(c')}] each $F \in \mathcal{F}$ is supported in some closed ball $\overline{B}_F \subset \R^{d-1} $,
where the set of radii of the balls $B_F$ is bounded away from zero and infinity,
\vspace{0.1cm}

  \item[{\normalfont(d)}]
    $F\in C^1(B_F)$ for any $F \in \mathcal{F}$, and $\sup\limits_{F \in \mathcal{F}} ||\nabla F ||_{L^1(B_F)}<\infty$,
\vspace{0.1cm}

 \item[{\normalfont(e)}] $\sup\limits_{F \in \mathcal{F}} ||  F ||_{L^\infty(B_F)}<\infty$.

\end{itemize}

Then there exist an irrational vector $n\in S_0$, and an unbounded, strictly increasing sequence of positive
numbers $\{\lambda_k\}_{k=1}^\infty$ such that
for any $X_0 \in \R^d$ there exists
a real-valued function $v_0 \in C^\infty(\mathbb{T}^d)$ for which
the estimate
\begin{equation}\label{est-of-lemma-slow-with-shift}
\left|  \int_{\R^{d-1} } F(x) v_0(\lambda_k M(x,0) + \lambda_k X_0 ) dx - c_0(v_0) \int_{\R^{d-1}} F(x) dx   \right| \geq \omega(\lambda_k)
\end{equation}
holds for any $F\in \mathcal{F}$ and each integer $k\geq 1$, whenever $M\in \OO(d)$ and $M e_d = n$.
\end{lem}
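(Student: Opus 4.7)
The plan is to adapt the three-step proof of Lemma \ref{Lem-slow-conv-family-of-F}, with two modifications: the integration-by-parts bound must be adjusted for compactly supported $F$, and the Fourier coefficients of $v_0$ must carry $X_0$-dependent phases that precisely cancel the shift-induced phase in the diagonal term.

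First, I would use (c') to fix $A_0$ large enough that $\overline{B}_F \subset B_{A_0}$ for every $F \in \mathcal{F}$; this makes the tail integral in condition (b) vanish identically, so inequality (\ref{est1-A-0-family}) and the positivity of $\e_0 := \inf_{F\in\mathcal{F}} \e_F$ are automatic from (a) and (e). The choice of $\delta_0$ as in (\ref{choice-of-delta-0}), the auxiliary modulus $\omega_1$, the irrational direction $n \in S_0$ and the index set $\Lambda = \{\xi^{(k)}\}_{k\geq 1}$ obtained from Lemma \ref{Lem-bad-normals}, the gap condition (\ref{kappa-gap}) and the definition (\ref{def-lambda}) of $\lambda_k$ all carry over verbatim.

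Next, I would establish the analogue of (\ref{bound-by-int-by-parts}) accounting for compact support. For $F \in \mathcal{F}$ and $\xi\in\Z^d\setminus\{0\}$, integrating by parts over $B_F$ in the direction of the largest coordinate of $N^T\xi$ produces a volume term controlled by (d) and a boundary term controlled by (c') and (e), yielding
\[
|I(\lambda;\xi)| \leq \frac{C_d}{\lambda\,|N^T\xi|}\Bigl(\|\nabla F\|_{L^1(B_F)} + \|F\|_{L^\infty(B_F)}\,|\partial B_F|\Bigr),
\]
with the right-hand side bounded uniformly over $\mathcal{F}$. The increasing subsequence $\{i_k\}$ is then selected exactly as in Step 3 of the previous lemma so that the analogue of (\ref{z2}) holds with this uniform constant. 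The key new ingredient is the construction of $v_0$: for the prescribed $X_0$, define
\[
c_{\xi^{(i_k)}}(v_0) := |\xi^{(i_k)}|^{-i_k}\exp\bigl(-2\pi i\,\lambda_{i_k}\,\xi^{(i_k)}\cdot X_0\bigr), \qquad c_{-\xi^{(i_k)}}(v_0) := \overline{c_{\xi^{(i_k)}}(v_0)},
\]
and set all remaining Fourier coefficients equal to zero. Since $|c_\xi(v_0)|=|\xi|^{-i_k}$ still decays faster than any polynomial, $v_0 \in C^\infty(\mathbb{T}^d)$; the relation $c_{-\xi}=\overline{c_\xi}$ makes $v_0$ real-valued; and $c_0(v_0)=0$, so the term subtracted in (\ref{est-of-lemma-slow-with-shift}) vanishes. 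Crucially, $v_0$ is independent of $F$. Substituting the Fourier expansion into $\int_{\R^{d-1}} F(x)\,v_0(\lambda_{i_k} M(x,0)+\lambda_{i_k} X_0)\,dx$, the shift phases in the $m=k$ term cancel exactly, leaving the real main contribution $2|\xi^{(i_k)}|^{-i_k}\,\mathcal{I}_{i_k}(\lambda_{i_k})$, which is bounded below in absolute value by $\tfrac{3}{4}\mathcal{I}_F/|\xi^{(i_k)}|^{i_k}$ via (\ref{est-with-38}). For the off-diagonal terms $m\neq k$, the surviving phases $e^{\pm 2\pi i(\lambda_{i_k}-\lambda_{i_m})\xi^{(i_m)}\cdot X_0}$ are unimodular and thus do not disturb the absolute-value bounds (\ref{est-range1-new}) and (\ref{est-range2-new}), which apply verbatim. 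Combining these three estimates and relabeling $\{\lambda_{i_k}\}$ as $\{\lambda_k\}$ delivers (\ref{est-of-lemma-slow-with-shift}).

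The principal obstacle is the compatibility of the shift $\lambda_k X_0$ with the $F$-uniformity of $v_0$: since the phase $e^{2\pi i\lambda_k\xi\cdot X_0}$ introduced by the shift depends on $\lambda_k$, no single real-coefficient $v_0$ can cancel it for all $k$ simultaneously. The resolution is the observation that cancellation is only required in the $k$-th diagonal term, because the off-diagonal contributions are controlled purely by absolute values and are insensitive to unimodular factors; assigning each Fourier mode $c_{\xi^{(i_k)}}$ its own $X_0$- and $\lambda_{i_k}$-dependent phase achieves this cancellation while preserving uniformity in $F$.
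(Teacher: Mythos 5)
Your proposal is correct and follows the same overall route as the paper: the construction of $n$, $\Lambda$, $\{\lambda_k\}$ and the sparse subsequence $\{i_k\}$ is imported from Lemma \ref{Lem-slow-conv-family-of-F}, and the shift is absorbed exactly as in the paper by attaching the unimodular phase $e^{-2\pi i\lambda_{i_k}\,\xi^{(i_k)}\cdot X_0}$ to the coefficient $c_{\xi^{(i_k)}}(v_0)$ (with the conjugate on $-\xi^{(i_k)}$), so that the phase cancels in the diagonal term while the off-diagonal sums $\Sigma_1,\Sigma_2$ are unaffected since they are estimated in absolute value. The one place where you genuinely deviate is the integration-by-parts bound: the paper inserts a smooth cut-off $\varphi_\lambda$ equal to $1$ on $B(x_0,r-\lambda^{-1})$ and vanishing outside $B_F$, splits $I(\lambda;\xi)=I_1+I_2$, and controls the extra terms by the $O(\lambda^{-1})$ measure of the shell times $|\nabla\varphi_\lambda|\lesssim\lambda$; you instead integrate by parts directly over $B_F$ and keep the boundary term, bounding it by $\|F\|_{L^\infty}|\partial B_F|$, which is uniform over $\mathcal{F}$ by (c') and (e). Both yield the required uniform bound $|I(\lambda;\xi)|\lesssim(\lambda|N^T\xi|)^{-1}$, so your route works; the only caveat is that (d) gives $F\in C^1(B_F)$ on the open ball and (e) only an $L^\infty$ bound, so the trace of $F$ on $\partial B_F$ is not literally defined --- you should either integrate by parts on $B(x_0,r-\e)$ and pass to the limit (the volume term converges by dominated convergence since $\nabla F\in L^1$, and the boundary terms stay uniformly bounded), or use the paper's cut-off, which sidesteps the issue entirely. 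This is a minor technical repair, not a gap in the idea.
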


\begin{proof}
For notational convenience we extend all functions $F\in \mathcal{F}$ to $\R^{d-1}$ as zero outside their supports.
Observe that here the finiteness of the supremum of part (b) of Lemma \ref{Lem-slow-conv-family-of-F}
is automatically fulfilled.

We will start with the case $X_0=0$. First carry out the proof of Lemma \ref{Lem-slow-conv-family-of-F} up to the definition (\ref{I-lambda-xi}).
Since now functions from $\mathcal{F} $ have compact support, the bound in (\ref{bound-by-int-by-parts})
can not be obtained directly from integration by parts due to boundary terms appearing in the integration.
To overcome this technicality we introduce smooth cut-offs. Let $F \in \mathcal{F}$ be fixed, and let the closed ball 
$\overline{B}=\overline{B}(x_0,r)  $ be the support of $F$. Then by (c') we know that $r \geq c_0>0$ for some absolute
constant $c_0$. For any $\lambda>1/c_0$ we have $B(x_0, r-\lambda^{-1}) \subset B(x_0, r)$ and
 we let $\varphi_\lambda:\R^{d-1}\to [0,1] $ be a smooth function such that $\varphi_\lambda=1$ on $B(x_0, r-\lambda)$,
$\varphi_\lambda = 0 $ on $\R^d\setminus B(x_0, r)$, and $|\nabla \varphi_\lambda(x)| \leq c_1 \lambda$
for any $x\in \R^{d-1}$, where $c_1$ is some absolute constant.

Denoting by $\mathbb{I}_B$ the characteristic function of the ball $B(x_0,r)$,
we decompose (\ref{I-lambda-xi}) into
\begin{multline*}
I(\lambda; \xi ) = \int_{\R^{d-1}} F(x) \mathbb{I}_B (x) \varphi_\lambda (x)  e^{2\pi i \lambda x^T N^T \xi} dx + 
\int_{\R^{d-1}} F(x) \mathbb{I}_B (x) (1-\varphi_\lambda (x))  e^{2\pi i \lambda x^T N^T \xi} dx \\ : =  I_1(\lambda; \xi ) +I_2(\lambda; \xi ) .
\end{multline*}
Observe that in $I_2 (\lambda; \xi )$ we have integration over $B(x_0,r) \setminus B(x_0, r-\lambda^{-1}) $,
 hence by (e)
\begin{equation}\label{bound-on-I-2}
 |I_2 (\lambda; \xi )| \leq  | B(x_0,r) \setminus B(x_0, r-\lambda^{-1}) | \sup\limits_{F \in \mathcal{F}} ||  F ||_{L^\infty(\R^{d-1})}  \leq C \lambda^{-1}.
\end{equation}
For $I_1 (\lambda; \xi)$ we proceed as in (\ref{bound-by-int-by-parts}), however here we will have an additional term
coming from partial integration, namely the one involving $F \partial_k \varphi_\lambda$. But observe that
all derivatives of $\varphi_\lambda$
are supported on $B(x_0,r) \setminus B(x_0, r-\lambda^{-1}) $, and hence using the estimate on the gradient of $\varphi_\lambda$, along with condition (e) we get
$$
\int_{\R^{d-1}} | F(x) \partial_k \varphi_\lambda (x)| dx \leq c_1 \lambda 
| B(x_0,r) \setminus B(x_0, r-\lambda^{-1}) | \sup\limits_{F \in \mathcal{F}} ||  F ||_{L^\infty(\R^{d-1})} \leq C
$$
where constants are uniform in $F$ and $\lambda$. The last bound combined with (\ref{bound-on-I-2}) 
enables us to obtain the estimate in (\ref{bound-by-int-by-parts}) with possibly different absolute constants.
Then, the proof of the current lemma for $X_0=0$ follows from exactly the same argument in
Lemma \ref{Lem-slow-conv-family-of-F} starting from (\ref{bound-by-int-by-parts}) up to the end.

We now consider the case of any $X_0\in \R^d$. Let
$n$ be the normal, $\{\lambda_k\}$ be the sequence, and $\widetilde{v_0}$ be the function
for which  (\ref{est-of-lemma-slow-with-shift}) holds with $X_0=0$.
By constructions of Lemma \ref{Lem-slow-conv-family-of-F} (Step 3 in particular) and 
for the case of $X_0=0$
there is a strictly increasing sequence of integers $\{i_m\}_{m=1}^\infty$
and a set $\widetilde{\Lambda}=\{\xi^{(m)} \}_{m=1}^\infty \subset \Z^d\setminus \{0\}$
satisfying $|\xi^{(m)}|<|\xi^{(m+1)}|$ for all $m \in \N$,
such that 
$$
\widetilde{v_0} (\theta ) = \sum\limits_{m=1}^\infty \frac{1}{|\xi^{ (m) }|^{ i_m  } } 
\left[ e^{ 2\pi i \xi^{(m)} \cdot \theta  } + e^{ - 2\pi i \xi^{(m)} \cdot \theta  } \right], \qquad \theta \in \mathbb{T}^d.
$$
We now slightly adjust the coefficients of $\widetilde{v_0}$ to handle the effect of the shift.
Namely, consider the function
$$
v_0 (\theta ) = \sum\limits_{m=1}^\infty \frac{1}{|\xi^{ (m) }|^{ i_m  } } 
\left[  e^{ - 2\pi i \lambda_m \xi^{(m)} \cdot X_0  }   e^{ 2\pi i \xi^{(m)} \cdot \theta  } +  e^{  2\pi i \lambda_m \xi^{(m)} \cdot X_0  }  e^{ - 2\pi i \xi^{(m)} \cdot \theta  } \right].
$$
By definition, the Fourier coefficient $c_\xi (v_0 )$ is the complex conjugate of $c_{-\xi}(v_0)$ for any $\xi\in \Z^d$,
hence $v_0$ is real-valued. It is also clear that $v_0\in C^\infty(\mathbb{T}^d)$ and $c_0(v_0)=0$.
Following (\ref{exp-of-F-3}) and plugging $v_0$ into (\ref{est-of-lemma-slow-with-shift})
for each integer $k\geq 1 $ we get
$$
 \int\limits_{\R^{d-1} } F(x) v_0(\lambda_k M(x,0) + \lambda_k X_0 ) dx = 
 \frac{2}{| \xi^{ (k) } |^{i_k} } \int\limits_{\R^{d-1}} F(x) \cos ( 2\pi \lambda_k x^T N^T \xi^{(k)} ) dx + \Sigma_1(\lambda_k) + \Sigma_2(\lambda_k),
$$
where $\Sigma_1$ and $\Sigma_2$ are defined in analogy with (\ref{exp-of-F-3}).
Observe that the integral on the right-hand side of the above equality
is the same as for $X_0=0$,
and the sums in $\Sigma_1$ and $\Sigma_2$
can be estimated exactly as in the case $X_0=0$.
Indeed, the only difference is that coefficients in the sums are
multiplied by complex numbers having length 1
(namely the exponents involving $X_0$).
This fact will have no effect when taking absolute values of the terms
in the sums, which is precisely what we
do to bound $\Sigma_1$ and $\Sigma_2$.
Since the analysis is reduced to the case $X_0=0$,
the proof of the lemma is now complete.
\end{proof}

\begin{proof}[Proof of Theorem \ref{Thm-slow-variable}]
We will assume that
\begin{equation}\label{omega-decay-in-thm}
t^{\frac{1}{4d}} \omega (t) \to \infty, \ \ \text{ as } t\to \infty.
\end{equation}
This assumption results in no loss of generality, for a similar argument as in (\ref{assump-on-g}).
The reason for (\ref{omega-decay-in-thm})
is to have slower speed of decay than the error term involved in (\ref{bdry-layer-sol-repr2})
for a parameter $\kappa=1/(4d)$.

Let $1\leq \gamma\leq d$ be fixed from (\ref{cond-on-A}).
From (\ref{cell-problem}) and (\ref{cond-on-A})
we get $\chi^{*,\gamma} = 0$. The latter combined with (\ref{bdry-layer-system-for-v-star}) implies
that for the corresponding boundary layer corrector
we have $v_n^{*,\gamma} = 0$ for any $n\in \mathbb{S}^{d-1}$. 
For $1\leq \alpha \leq d$ and $n\in \mathbb{S}^{d-1}$, $v_n^{*,\alpha}$ solves a uniformly elliptic
PDE in $\Omega_n$ with periodic and smooth coefficients,
and with boundary data $\chi^{*,\alpha}$, hence standard elliptic regularity implies that
there is a constant $C_0$ independent of $n$, such that $|\nabla_y v_n^{*,\alpha} (y) | \leq C_0$
for any $y\in \partial \Omega_n$ and each $1\leq \alpha \leq d$. From this, and the fact that $v_n^{*,\gamma}=0$ it follows that
there exists an open subset of the sphere $\mathbb{S}_\gamma \subset \mathbb{S}^{d-1}$, such that
for any $n\in \mathbb{S}_\gamma $ and any $M\in \OO(d)$ with $M e_d = n$, one has
\begin{equation}\label{Z15}
\bigg| 1+   n_\alpha \big[ \partial_{z_d} \boldsymbol{\chi}^{*, \alpha} (\lambda  (z',0))  + 
 \partial_{z_d} \textbf{v}_n^{*,\alpha} ( \lambda   (z',0)) \big] \bigg| \geq \frac 12,
\end{equation}
for all $z'\in \R^{d-1}$ and any $\lambda>0$. Indeed, we simply choose $\mathbb{S}_\gamma$ so that each $n\in \mathbb{S}_\gamma$
has its $\gamma$-th component sufficiently close to 1.

For $n\in \mathbb{S}^{d-1}$ we fix some $M_n \in \OO(d)$ satisfying $M_n e_d = n$ and let $G^{0,n} (\cdot, \cdot )$
 be the Green's kernel for the pair $(M^T_n A^0 M_n, \R^d_+)$.
Consider the family of functions $\mathcal{F}:=\{F_n\}_{n\in \mathbb{S}^{d-1}}$,
where we have $F_n (x):= \partial_{2,d} G^{0,n} (e_d, (x,0)) $ for $x\in \R^{d-1}$.
By Lemma \ref{Lem-Green-props-for-slow} the family $\mathcal{F}$ satisfies conditions
(a), (b) and (c) of Lemma \ref{Lem-slow-conv-family-of-F}.

Recall that solutions to boundary layer problems are constructed
via the reduced boundary layer systems of form (\ref{bl-system-d+1}), hence we have
\begin{equation}\label{Z16}
 v_n^{*,\alpha} (y) = v_n^{*,\alpha} (M_n z)  = \textbf{v}_n^{*,\alpha} (z)  = V_n^\alpha (N_n z', z_d)
\end{equation}
where $V_n^\alpha$ solves the corresponding problem (\ref{bl-system-d+1}), and as is usual $M_n = [N_n| n]$.
In particular we have that $V_n^\alpha(\cdot, t)$ is $\Z^d$-periodic for any $t\geq 0$,
 and is smooth with respect to all its variables.
Here one should take into account the subtlety, that $V_n^\alpha$, and hence also $\textbf{v}_n^{*,\alpha}$,
implicitly depend on the matrix $M_n$, but as the choice of $M_n$ is now fixed,
we may ignore this dependence.

For $n\in \mathbb{S}^{d-1}$ consider the function
$$
\Psi_n(y) := 1+ n_\alpha \big( n\cdot \nabla_y \chi^\alpha (y) +   \partial_t V_n^\alpha (y,0) \big), \qquad y \in \R^d,
$$
where $V_n^\alpha$ is fixed from (\ref{Z16}). Clearly, $\Psi_n \in C^\infty(\mathbb{T}^d)$. 
From (\ref{from-y-to-z}) we see that $\partial_{z_d} = n \cdot \nabla_y$
which gives the relation between normal derivatives.
Now, if $y\in \partial \Omega_n$,
we get
\begin{equation}\label{V-and-normal-deriv}
 \partial_t V_n^\alpha( N z', 0)  = 
 \partial_{z_d} \textbf{v}_n^{*,\alpha}(z,0)= n\cdot \nabla_y v_n^{*,\alpha}(y).
\end{equation}
From here, the definition of $\Psi_n$ and (\ref{Z15}), let us show that for any \emph{irrational} $n\in \mathbb{S}_\gamma$ one has 
\begin{equation}\label{psi-is-large}
| \Psi_n (y)| \geq 1/2  \qquad \text{ for all } \qquad y \in  \R^d.
\end{equation}
The small nuance, that (\ref{psi-is-large}) needs the normal to be irrational as compared with (\ref{Z15})
lies in the fact that (\ref{Z15}) holds on the boundary of $\Omega_n$, while here we need the entire space $\R^d$.
To see (\ref{psi-is-large}), observe that for $y=Nz'$ with $z'\in \R^{d-1}$ the lower bound we need is due to (\ref{Z15})
and (\ref{V-and-normal-deriv}). Now, if the normal $n$ is irrational, then $\{Nz' : \ z'\in \R^{d-1} \}$
is everywhere dense in $\mathbb{T}^d$, which is the unit cell of periodicity of $\Psi_n$,
hence the continuity of $\Psi_n$ completes the proof of (\ref{psi-is-large}).

We now apply Lemma \ref{Lem-slow-conv-family-of-F} for the family $\mathcal{F}$ and modulus of continuity $2\omega$,
and let $\nu \in \mathbb{S}_\gamma $ be the unit irrational vector and $\{\lambda_k\}_{k=1}^\infty$ be the increasing sequence given by Lemma \ref{Lem-slow-conv-family-of-F}.
Thus for $\nu$
we have (\ref{psi-is-large}). 
Next, for a function $F_\nu(x) $ let $\widetilde{v}_0\in C^\infty(\mathbb{T}^d)$ be the function given by Lemma \ref{Lem-slow-conv-family-of-F}
for which
\begin{equation}\label{Z13}
\left| \int_{\R^{d-1}} F_\nu(x) \widetilde{v}_0(\lambda_k M_\nu (x,0)) dx  - c_0(\widetilde{v}_0 ) \int_{\R^{d-1}} F_\nu(x) dx \right| \geq 2\omega(\lambda_k),
\end{equation}
where $c_0(\widetilde{v}_0 )$ is the $0$-th Fourier coefficient and $k=1,2,...$ .
Ellipticity of $A$ implies that $\nu^T A(y) \nu \geq c_0 |\nu|^2=c_0 $ for any $y\in \R^d$, with absolute constant $c_0>0$,
hence taking into account (\ref{psi-is-large}) we define
\begin{equation}\label{Z14}
v_0(y) := \frac{1}{\nu^T A(y) \nu} \frac{1}{\Psi_\nu(y)}  \widetilde{v}_0 (y) , \qquad y\in \R^d,
\end{equation}
and get $v_0\in C^\infty(\mathbb{T}^d)$.

Finally, we claim that $\nu$, $\{\lambda_k\}_{k=1}^\infty$, and $v_0$ defined by (\ref{Z14}) satisfy the Theorem. Indeed
by (\ref{bdry-layer-sol-repr2}) the solution to boundary layer problem with these parameters has the form
$$
v(y) = \int_{\R^{d-1}} \partial_{2,d} G^{0,\nu} (e_d, (z',0)) \widetilde{v}_0(\lambda M_\nu(z',0)) dz' +O(\lambda^{-\frac{1}{4d}}),
$$
where the parameter $\kappa$ in (\ref{bdry-layer-sol-repr2}) is set to $1/(4d)$.
The last expression combined with (\ref{Z13}) and (\ref{omega-decay-in-thm})
completes the proof of the Theorem.
\end{proof}

\section{Application to boundary value homogenization}\label{sec-application-to-Dirichlet}

This section is devoted to the proof of Theorem \ref{Thm-slow-Dirichlet},
but before delving into details, we sketch the main idea behind the proof.
By \cite{ASS1}-\cite{ASS3} we know that the boundary value homogenization
of type considered in (\ref{Dirichlet-bdd-domains})
is determined by geometric properties of the boundary of the reference domain,
such as non-vanishing Gaussian curvature \cite{ASS1}-\cite{ASS2}, or flat pieces with Diophantine normals
\cite{ASS3}.
Under these conditions one is able to deduce effective upper bounds on convergence rates for the homogenization,
where the rate will be uniform with respect to the boundary data.
With these in mind, a suitable candidate of domain $D$ for Theorem \ref{Thm-slow-Dirichlet}
is a $C^\infty $ domain such that part of its boundary has non-vanishing Gaussian curvature,
while the rest is a piece of a hyperplane. 
Then relying on integral representation of solutions to (\ref{Dirichlet-bdd-domains}) 
via Poisson kernel,
one splits the integral into two parts, namely over curved and flat boundaries.
The next step is to show that the contribution of the curved part has
a prescribed rate of decay determined only by the embedding of $\partial D$ into $\R^d$,
and hence is invariant under rotations of the domain. This step can be fulfilled
by adapting the methods of \cite{ASS1}-\cite{ASS2}.
For the integral over the flat part one shows, using Lemma \ref{Lem-family-on-compact-supp}, that
after a suitable rotation of the domain and an appropriate choice
of the boundary data it
can be made comparatively large. In this section we rigorously implement this idea.

\subsection{Preliminary results}\label{subsec-prelim}
We present some technical results which will be used for the proof of Theorem \ref{Thm-slow-Dirichlet}
below.

Assume we have a bounded domain $D\subset \R^d$ ($d\geq 2$) with smooth boundary,
and a divergence form operator $\mathcal{L}:=-\nabla \cdot A(x) \nabla $
where coefficient matrix $A$ is defined in $\overline{D}$, and is strictly elliptic,
and smooth. Note, that we do not impose any structural condition nor any periodicity assumption on $A$.
Next, we let $P(x,y):D\times \partial D \to \R$ be the Poisson kernel for the operator $\mathcal{L}$ in the domain $D$.
Then by Lemma \ref{Lem-Poisson-dist-smooth} we have
\begin{equation}\label{Poisson-dist-1}
 |P(x,y)|\leq C_P \frac{d(x)}{|x-y|^d},\qquad x\in D, \ y\in \partial D,
\end{equation}
where the constant $C_P=C_P(A, D, d)$. With this notation we have

\begin{lem}\label{lem-Poisson-on-a-portion}
Let $\Pi$ be an open and connected subset of $\partial D$
and $ D_0 \Subset D  $ be fixed.
Then  
$$
  \inf\limits_{x\in D_0} \int_{\Pi} P(x, y)d\sigma(y) >0.
$$
\end{lem}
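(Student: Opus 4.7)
The plan is to reduce the claim to the strong maximum principle by replacing the indicator function of $\Pi$ with a smooth nonnegative bump compactly supported inside $\Pi$. First I would pick $\varphi \in C_c^\infty(\Pi)$ with $0 \leq \varphi \leq 1$ and $\varphi \not\equiv 0$, and let $u_\varphi$ be the unique classical solution of $\mathcal{L} u_\varphi = 0$ in $D$ with boundary data $\varphi$. By the Poisson integral formula,
$$ u_\varphi(x) = \int_{\partial D} P(x,y)\, \varphi(y) \, d\sigma(y), \qquad x \in D. $$

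The next ingredient is the positivity of the Poisson kernel: $P(x,y) \geq 0$ for all $x \in D$ and $y \in \partial D$. For our scalar divergence form operator with smooth, uniformly elliptic coefficients this is classical; it can be seen by representing $P$ as the inward conormal derivative of the Green's kernel $G$ on $\partial D$ (with respect to the $y$ variable), noting that $G(x,\cdot) \geq 0$ and vanishes on $\partial D$, and applying Hopf's lemma to get strict negativity of the outward conormal derivative. Granted this, and using $0 \leq \varphi \leq \mathbf{1}_\Pi$, I would obtain the pointwise bound
$$ u_\varphi(x) \leq \int_\Pi P(x,y)\, d\sigma(y), \qquad x \in D. $$

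To finish, it then suffices to show $\inf_{D_0} u_\varphi > 0$. Since the boundary data $\varphi$ is nonnegative and not identically zero, the strong maximum principle for elliptic operators $-\nabla \cdot A \nabla$ with smooth coefficients rules out any interior zero of $u_\varphi$, so $u_\varphi > 0$ throughout $D$. As $u_\varphi$ is continuous on $D$ and $D_0$ is compactly contained in $D$, the infimum over $D_0$ is attained and strictly positive, which combined with the previous display proves the lemma.

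The main technical point is the positivity of $P$; once this is in hand the remainder of the argument is essentially textbook. It is worth noting that the proof uses neither periodicity nor any structural condition on $A$, which is consistent with the generality assumed in Theorem~\ref{Thm-slow-Dirichlet}.
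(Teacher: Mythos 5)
Your argument is correct, but it takes a genuinely different route from the paper. You approximate $\mathbf{1}_\Pi$ \emph{from below} by a single smooth bump $\varphi\in C_c^\infty(\Pi)$, invoke the nonnegativity of the Poisson kernel to get $\int_\Pi P(x,y)\,d\sigma(y)\ge u_\varphi(x)$, and then conclude via the strong maximum principle together with compactness of $\overline{D_0}$. The paper instead approximates $\mathbf{1}_\Pi$ \emph{from above} by smooth $g_n$ equal to $1$ on $\Pi$, uses the kernel bound $|P(x,y)|\le C_P\,d(x)/|x-y|^d$ to force $u_n(x_0)\ge 1/2$ at a carefully chosen point $x_0$ near $\Pi$, spreads this bound over $D_0$ by Harnack's inequality, and passes to the limit $n\to\infty$; notably it never needs positivity of $P$. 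Your version is shorter and softer, and all its ingredients (positivity of $P$ via the Green's kernel and Hopf's lemma, the strong maximum principle, weak maximum principle giving $u_\varphi\ge0$) are available in this smooth scalar setting, so the lemma as stated is fully proved. The trade-off is that your lower bound is purely existential: it comes from compactness and carries no control on the constant. The paper's quantitative route is deliberately chosen so that every constant ($C_P$, $\delta_0$, the Harnack constant $C_1$) can be tracked and shown independent of rotations of the domain, which is exactly what Corollary \ref{cor-Poisson-inf-rotated} requires; your argument would need an additional uniformity or continuity-in-$M$ step to yield that corollary.
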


\begin{proof}
As a trivial observation, before we start the proof, notice that if $\Pi$ is the entire boundary of $D$,
then the integral in question is identically
1. The general case, however, requires some care.
The proof is motivated by \cite[Lemma 3.1]{ASS2}.
We will use integral representation of solutions, to get a more precise
version of the maximum principle.
 Fix a sequence of smooth functions $\{g_n\}_{n=1}^\infty$,
where $g_n:\R^d \to [0,1]$ such that
$g_n=1$ on $\Pi$, and for any domain $\widetilde{\Pi}\subset \R^d$ which compactly contains $\Pi$, 
the sequence $\{g_n\}$ uniformly converges to 0 outside $\widetilde{\Pi}$
as $n\to \infty$. We let $u_n$ be the solution to Dirichlet problem for $\mathcal{L}$ in domain $D$ having boundary
data $g_n \big|_{\partial D}$.
Fix some $\xi\in \Pi$. Since $\Pi $ is open in $\partial D$ there exists $\delta_0>0$ such that
\begin{equation}\label{delta0}
 \{y\in \partial D: | y-\xi |\leq \delta_0   \} \subset \Pi.
\end{equation}
We get
\begin{align*}
\numberthis \label{u-n-minus-g-n} |u_n(x) - g_n(\xi)| = 
\left| \int_{\partial D} P(x,y) [g_n(y) - g_n(\xi) ] d\sigma(y) \right| \leq& \\
\int_{\partial D} \big| P(x,y) [g_n(y) - g_n(\xi) ] \big| d\sigma(y) =& \\
\int_{|y-\xi|> \delta_0 }\big| P(x,y) [g_n(y) - g_n(\xi) ] \big| d\sigma(y) \leq  
2C_P d(x) &||g_n ||_{L^\infty} \int_{|y-\xi|> \delta_0 } \frac{d \sigma(y)}{|x-y|^d}, 
\end{align*}
where we have used (\ref{delta0}) and the fact that $g_n=1$ on $\Pi$ to pass from the second row of (\ref{u-n-minus-g-n})
to the first expression of the last row. We now choose $x\in D$ such that $|x-\xi|<\delta_0 /2$.
The triangle inequality implies $|x-y|\geq \delta_0/2$ for all $y\in \partial D$ satisfying $|y-\xi|>\delta_0$.
Hence, the last integral in (\ref{u-n-minus-g-n}) can be estimated as follows
\begin{equation}\label{a1}
\int_{|y-\xi|> \delta_0 } \frac{d \sigma(y)}{|x-y|^d} \leq 2^d \int_{|y-\xi|> \delta_0 } \frac{d \sigma(y)}{|y-\xi|^d} \\
\lesssim \int_{\delta_0}^1 \frac{t^{d-2}}{t^d} dt \leq C_0 \frac{1}{\delta_0},
\end{equation}
with some positive $C_0=C_0(d)$ uniform in $x$ and $\delta_0$,
and we have invoked integration in spherical coordinates to estimate the surface integral.
Without loss of generality we assume that constants $C_0, C_P \geq 1$.
We now fix $x_0\in D$ such that 
\begin{equation}\label{a2}
|x_0-\xi|\leq \frac{\delta_0}{2} \qquad  \text{ and } \qquad   \frac{\delta_0}{10 C_0 C_P} \leq d(x_0) \leq \frac{\delta_0}{4 C_0 C_P} .
\end{equation}
This is always possible, provided $\delta_0 >0$ is small enough.
Denote $\widetilde{D}:= D_0 \cup \{x_0\}$.
From (\ref{a1}) and (\ref{u-n-minus-g-n}) we obtain
$|u_n(x_0) - g_n(\xi)|   \leq 1/2 $, hence the triangle inequality implies
\begin{equation}\label{a3}
|u_n(x_0)|= |g_n(\xi) +u_n(x_0)-g_n(\xi)|\geq |g_n(\xi)| - |u_n(x_0)-g_n(\xi)| \geq 1-1/2 =1/2.
\end{equation}
From (\ref{a3})
one has $\sup_{\widetilde{D}} u_n \geq 1/2  $ for any $n\in \N$. From the maximum principle we infer that all $u_n$
are everywhere non-negative in $D$, thus applying Moser's version of Harnack's inequality
(see e.g. \cite[Theorem 8.20]{GT})
we get
\begin{equation}\label{Harnack-Mozer}
1/2\leq \sup_{\widetilde{D}} u_n \leq C_1 \inf_{ \widetilde{D} } u_n \leq C_1 \inf_{ D } u_n, \qquad n=1,2,... ,
\end{equation}
where the constant $C_1=C_1(d, A, \widetilde{D}, D)$.
We have the representation
$$
u_n(x) = \int_{\partial D} P(x,y) g_n(y) d\sigma(y), \qquad x\in D.
$$
Using the construction of $g_n$ we pass to the limit in the last integral, getting by (\ref{Harnack-Mozer}) that
$$
\int_{\Pi} P(x,y) d\sigma(y) =\lim\limits_{n\to \infty} u_n(x) \geq \frac{1}{2 C_1}, \qquad \forall x\in D_0.
$$

The proof is complete.
\end{proof}

We will also need a version of the last lemma for a family
of Poisson kernels corresponding to a rotated images of a given domain.
To fix the ideas, recall that the coefficient matrix of (\ref{Dirichlet-bdd-domains}) is defined
on some fixed domain $X$. Let $D\Subset X$ be a bounded domain with $C^\infty$ boundary.
For a matrix $M\in \OO(d)$
define an orthogonal transformation $\mathcal{M}: \R^d \to \R^d$ by $\mathcal{M}x = Mx$, $x\in \R^d$
and consider the rotated domain $D_M := \mathcal{M} D $.
Obviously $D_M$ is a bounded domain, it is also clear that $\partial D_M =\mathcal{M}(\partial D_0) $
since $\mathcal{M}$ is a diffeomorphism. Next, the smoothness of $D_M$ follows directly form the definition
of smooth boundary (see  e.g. Section 6.2 of \cite{GT}).
We shall take the diameter of $D$ sufficiently small so that $D_M\Subset X$ for any $M\in \OO(d)$.
Now we get the following version of the previous lemma.

\begin{cor}\label{cor-Poisson-inf-rotated}
Let $D\subset X$ be as above and let $\Pi \subset \partial D$ be open and connected.
Fix $B_0\Subset D$ and for some small constant $c_0>0$ denote 
$$
\widetilde{\OO} : = \{ M\in \OO(d): \  B_0\subset D_M \text{ and }  \mathrm{dist}(B_0, \partial D_M)\geq c_0  \}.
$$
Then
$$
\inf\limits_{M\in \widetilde{\OO}, \ x\in B_0 } \int_{\Pi_M} P_M(x,y) d\sigma(y) >0,
$$
where $\Pi_M = \mathcal{M}(\Pi)$ and $P_M$ is the Poisson kernel for the pair $(A, D_M)$.
\end{cor}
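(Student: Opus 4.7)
My strategy is to rerun the argument of Lemma \ref{lem-Poisson-on-a-portion} for each domain $D_M$ in place of $D$, while verifying that every constant appearing along the way can be chosen uniformly in $M\in\widetilde{\OO}$.

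First I would fix, once and for all, a point $\xi\in\Pi$ and a radius $\delta_0>0$ with $\{y\in\partial D:|y-\xi|\le\delta_0\}\subset\Pi$. Since $\mathcal{M}$ is an isometry, the rotated point $\xi_M:=\mathcal{M}\xi$ satisfies the analogous inclusion on $\partial D_M$ with the \emph{same} $\delta_0$, so the openness data from $\Pi$ transfers uniformly to every $\Pi_M$. Likewise, fix a single auxiliary point $\widetilde{x}_0\in D$ satisfying the pair of conditions analogous to (\ref{a2}), and set $x_0(M):=\mathcal{M}\widetilde{x}_0\in D_M$; then $|x_0(M)-\xi_M|\le\delta_0/2$ and $\mathrm{dist}(x_0(M),\partial D_M)=\mathrm{dist}(\widetilde{x}_0,\partial D)$ are independent of $M$. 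With these choices, the approximating boundary data $g_n^{(M)}$ concentrated near $\xi_M$ and the associated solutions $u_n^{(M)}$ of $\mathcal{L}u=0$ in $D_M$ can be constructed verbatim as in Lemma \ref{lem-Poisson-on-a-portion}, and the chain of inequalities leading to $|u_n^{(M)}(x_0(M))|\ge 1/2$ goes through unchanged.

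The substantive issue, which I expect to be the main obstacle, is to verify that the three constants $C_P$, $C_0$, $C_1$ used in (\ref{Poisson-dist-1}), (\ref{a1}) and (\ref{Harnack-Mozer}) are uniform in $M\in\widetilde{\OO}$. Uniformity of $C_0$ is immediate since it is purely geometric and depends neither on $A$ nor on $D_M$. For $C_P$, the key point is that $D\Subset X$, so the family $\{D_M\}_{M\in\OO(d)}$ lies in a common compact subset of $X$; because $A\in C^\infty(X)$ is strictly elliptic, the restrictions $A|_{D_M}$ have uniformly controlled ellipticity and $C^k$ norms, and the proof of Lemma \ref{Lem-Poisson-dist-smooth} then produces a $C_P$ depending only on these data together with the (isometry-invariant) geometry of $\partial D_M$. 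For $C_1$, the two lower bounds $\mathrm{dist}(B_0,\partial D_M)\ge c_0$ (built into the definition of $\widetilde{\OO}$) and $\mathrm{dist}(x_0(M),\partial D_M)=\mathrm{dist}(\widetilde{x}_0,\partial D)>0$ allow the compact set $B_0\cup\{x_0(M)\}$ to be covered by a Harnack chain inside $D_M$ of fixed length, and Moser's Harnack inequality applied along this chain yields a constant depending only on these bounds, on the diameter of $D$, and on the uniform data for $A$.

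Once uniformity of $C_P$, $C_0$, $C_1$ is secured, the argument of Lemma \ref{lem-Poisson-on-a-portion} yields
$$
\int_{\Pi_M}P_M(x,y)\,d\sigma(y)=\lim_{n\to\infty}u_n^{(M)}(x)\ge\frac{1}{2C_1}>0
$$
for every $x\in B_0$ and every $M\in\widetilde{\OO}$, which is the conclusion of the corollary.
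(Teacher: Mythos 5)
Your proposal is correct and follows essentially the same route as the paper: both proofs rerun Lemma \ref{lem-Poisson-on-a-portion} for each $D_M$ and verify that the constants $C_P$ (via Lemma \ref{Lem-Poisson-dist-smooth}), the dimensional constant in (\ref{a1}), and the Harnack constant $C_1$ (via the uniform distance $\mathrm{dist}(B_0,\partial D_M)\geq c_0$ and a covering/chain argument) can all be chosen independently of $M\in\widetilde{\OO}$. Your explicit transport of the reference point $\xi$ and the auxiliary point $\widetilde{x}_0$ by the isometry $\mathcal{M}$ is a slightly more detailed bookkeeping of the same idea.
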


\begin{proof}
For each fixed $M\in \widetilde{\OO}$ the infimum is positive
in view of Lemma \ref{lem-Poisson-on-a-portion}, and
we will simply follow the dependence of constants in the proof of Lemma \ref{lem-Poisson-on-a-portion}
on the rotation introduced by $M$. First of all by Lemma \ref{Lem-Poisson-dist-smooth}
we have that the constant in (\ref{Poisson-dist-1}) is independent of $M$
therefore (\ref{a2}) is uniform with respect to $M$.
Concerning the use of Harnack inequality in (\ref{Harnack-Mozer})
referring to \cite[Theorem 8.20]{GT}
we know that constants for  a ball of radius $R>0$
depend on dimension of the space, ellipticity bounds of the operator, and the radius $R$. 
Here we have the same operator for all domains $D_M$.
Finally, the uniform distance of $B_0$ from the boundary of $D_M$ for each $M\in \widetilde{\OO}$,
along with standard covering argument extending the Harnack inequality
from balls to arbitrary sets, shows that the constant $C_1$
of (\ref{Harnack-Mozer}) can be chosen independently of $M$.
As the choice of all constants in the proof of Lemma \ref{lem-Poisson-on-a-portion}
can be made uniform with respect to $M\in \widetilde{\OO}$,
the proof is complete.
\end{proof}

The next lemma is used in the localization argument of Proposition \ref{int-in-prop} below.

\begin{lem}\label{Lem-Hessian-one-to-one}
For $r_0>0$ let $\psi \in C^3(\overline{B_{r_0}(0)})$ and assume that
$\psi(0) = |\nabla \psi(0)|=0$ and 
$$
\mathrm{Hess} \psi(0) : = (\partial_{\alpha\beta}^2 \psi)_{\alpha, \beta =1}^d (0)= \mathrm{diag}(a_1,...,a_d)\in M_d(\R), 
$$
where $0<a_1 \leq ...\leq a_d$. Then there exist positive constants 
$c_0=c_0(d, ||\psi ||_{C^3}) <  c_1 = c_1(d, ||\psi ||_{C^3}  )<  r_0/a_1 $,
and 
$K_1=K_1(d)\leq 1  \leq K_2=K_2(d, ||\psi ||_{C^2})$ such that
\vspace{0.2cm}
\begin{itemize}
\item[\normalfont{(i)}]
$
K_1 a_1 |x-y| \leq | \nabla \psi(x) - \nabla \psi(y) | \leq K_2 |x-y|, \qquad \forall x,y\in B(0,c_1 a_1),
$
\vspace{0.2cm}
\item[\normalfont{(ii)}]
if $\delta_{\alpha \beta}$ is the Kronecker symbol, then for any $1\leq \alpha, \beta \leq d$ one has
$$
\left| \partial^2_{\alpha \beta} \psi(x) - a_\alpha \delta_{\alpha \beta}  \right|\leq \frac{a_1}{20 d}, \qquad 
\forall x\in B(0, c_1 a_1),
$$

\item[\normalfont{(iii)}]
$
B(0, c_0 a_1^2) \subset (\nabla \psi)( B(0,c_1 a_1)).
$

\end{itemize}

\end{lem}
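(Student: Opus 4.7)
The plan is to establish the three claims in the order (ii), (i), (iii), since each builds on the previous. For (ii), $C^3$ regularity gives
\[
|\partial^2_{\alpha\beta}\psi(x) - a_\alpha \delta_{\alpha\beta}|
= |\partial^2_{\alpha\beta}\psi(x) - \partial^2_{\alpha\beta}\psi(0)|
\leq \|\psi\|_{C^3}|x|
\]
on $B_{r_0}(0)$, and the right-hand side is at most $a_1/(20d)$ as soon as $|x| \leq a_1/(20d\|\psi\|_{C^3})$. This fixes the scale of $c_1$, which I would additionally shrink (using the trivial bound $a_1 \leq \|\psi\|_{C^2} \leq \|\psi\|_{C^3}$) to ensure $c_1 a_1 < r_0$ and eventually $c_0 := K_1 c_1/2 < c_1$.

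For (i), apply the fundamental theorem of calculus along the segment from $y$ to $x$:
\[
\nabla\psi(x) - \nabla\psi(y) = \int_0^1 \mathrm{Hess}\,\psi(y+t(x-y))(x-y)\,dt.
\]
By (ii), at each point of the segment the Hessian decomposes as $\mathrm{diag}(a_1,\dots,a_d) + E$ with operator norm of $E$ at most $d \cdot a_1/(20d) = a_1/20$. The diagonal part sends $(x-y)$ to a vector of norm in $[a_1, a_d]|x-y|$, which gives the upper bound with $K_2 := 2\|\psi\|_{C^2}$, and, by the reverse triangle inequality, the lower bound with $K_1 := 19/20 \leq 1$.

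For (iii), the idea is a direct minimization argument. Given any $p$ with $|p| < c_0 a_1^2$, consider $f(x) := |\nabla\psi(x) - p|^2$ on the compact set $\overline{B(0, c_1 a_1)}$, where it attains its minimum at some $x^*$. On the boundary, part (i) combined with $\nabla\psi(0)=0$ yields $|\nabla\psi(x)| \geq K_1 c_1 a_1^2 = 2c_0 a_1^2 > 2|p|$, so $f(x) > |p|^2 = f(0)$ there; hence $x^*$ lies in the interior. At such a point $\nabla f(x^*) = 2\,\mathrm{Hess}\,\psi(x^*)(\nabla\psi(x^*) - p) = 0$, and invertibility of $\mathrm{Hess}\,\psi(x^*)$ (ensured by (ii)) forces $\nabla\psi(x^*) = p$, so $p \in \nabla\psi(B(0, c_1 a_1))$.

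The main obstacle is purely bookkeeping: tracking how $c_0, c_1, K_1, K_2$ depend only on $d$ and the stated norms of $\psi$, and making the constraint $c_1 a_1 < r_0$ compatible with the scale dictated by (ii). The analytic content is routine; a less quantitative alternative for (iii) would be to invoke invariance of domain to conclude that $\nabla\psi(B(0, c_1 a_1))$ is open, but the minimization route pins down the surjectivity radius $c_0 a_1^2$ explicitly.
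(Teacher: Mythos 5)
Your proposal is correct, and in two of the three parts it takes a genuinely different route from the paper. For the lower bound in (i), the paper argues coordinate by coordinate: it introduces the diagonal-dominance quantities $g_\alpha(x)=|\partial^2_{\alpha\alpha}\psi(x)|-\sum_{\beta\neq\alpha}|\partial^2_{\alpha\beta}\psi(x)|$, shows via the linear modulus of continuity of the second derivatives that $g_\alpha>g_\alpha(0)/2$ on the small ball, and then applies the Mean-Value Theorem to the component of maximal increment; you instead write $\nabla\psi(x)-\nabla\psi(y)=\bigl(\int_0^1 \mathrm{Hess}\,\psi\,dt\bigr)(x-y)$ and perturb off the diagonal matrix using (ii), which is cleaner and makes the logical order (ii) $\Rightarrow$ (i) explicit (the paper proves (i) first, independently of (ii)). For (iii), the paper invokes a quantitative inverse function theorem (it cites Wang's Newton-method result, with a footnote sketching the contraction-mapping map $G(x)=x-(\mathrm{Hess}\,\psi(0))^{-1}(\nabla\psi(x)-y)$ together with the estimate $\|(\mathrm{Hess}\,\psi(0))^{-1}\|\asymp a_1^{-1}$), whereas you give a self-contained variational argument: minimize $|\nabla\psi(x)-p|^2$ over the closed ball, use (i) at $y=0$ to force the minimizer into the interior for $|p|<c_0a_1^2$ with $c_0=K_1c_1/2$, and conclude from the first-order condition and invertibility of the Hessian (guaranteed by (ii), since the perturbation has norm $a_1/20<a_1$). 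Both yield the same $a_1^2$ scaling of the surjectivity radius; your route avoids the external reference at no real cost, while the paper's gets the constants from a ready-made quantitative theorem. The remaining issues you flag (the exact value of $K_2$, the compatibility of $c_1a_1<r_0$ with the scale dictated by (ii)) are indeed only bookkeeping, handled to the same level of detail in the paper itself.
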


\begin{proof}
We start with part (i). For any $x, y \in B_{r_0}(0)$ by Mean-Value Theorem we have
\begin{equation}\label{upper-bound-grad-of-psi-1}
| \nabla \psi(x) - \nabla  \psi(y) | \leq 
\sum\limits_{\alpha=1}^d |\partial_\alpha \psi(x ) - \partial_\alpha \psi(y)| \leq 
\sum\limits_{\alpha,\beta=1}^d  || \partial_{\alpha \beta}^2 \psi||_{L^\infty( B_{r_0}(0))}  |x-y| ,
\end{equation}
which demonstrates the upper bound of (i).
To obtain a lower bound,
for $1\leq \alpha \leq d$ and $x \in B_{r_0}(0)$ set
$g_\alpha (x) = |\partial_{\alpha \alpha}^2 \psi (x) |
  - \sum\limits_{\beta=1, \beta \neq \alpha }^d |\partial_{\alpha \beta}^2 \psi(x)| $,
obviously $g_\alpha(0)= a_\alpha >0$. 
By $C^3$-smoothness of $\psi$ we have that each $g_\alpha$ has linear modulus of continuity, hence
there exists a constant $c_1=c_1(d, ||\psi ||_{C^3} ) $ such that for all $|x|\leq c_1 g_\alpha(0)$ we get
$g_\alpha(x) >  g_\alpha (0)/2$.
From here for any $x, y \in B_{c_1 a_1} (0)$ Mean-Value Theorem yields
\begin{multline}\label{lower-bound-grad-of-psi-1}
| \nabla \psi(x ) - \nabla \psi(y ) | \geq 
c_d \sum\limits_{\alpha=1}^d | \partial_\alpha \psi(x) - \partial_\alpha \psi(y) |
=c_d \sum\limits_{\alpha=1}^d | \nabla (\partial_\alpha \psi) (\tau_\alpha) \cdot (x-y) |,
\end{multline}
where $\tau_\alpha$ lies on the segment $[x,y]$
and $c_d$ is a constant depending on dimension.
We next fix $1\leq \alpha \leq d$ from $|x_\alpha - y_\alpha| = \max\limits_{1\leq \beta \leq d}|x_\beta - y_\beta|$,
and invoking (\ref{lower-bound-grad-of-psi-1}) we get
\begin{equation}\label{lower-bound-grad-of-psi-2}
| \nabla \psi(x ) - \nabla  \psi(y ) | \geq c_d |\nabla  (\partial_\alpha \psi)(\tau_\alpha) \cdot (x-y)| \geq \\
c_d g_\alpha (\tau_\alpha) |x_\alpha - y_\alpha| \geq c_d a_1 |x-y| .
\end{equation}
Combining (\ref{upper-bound-grad-of-psi-1}) and (\ref{lower-bound-grad-of-psi-2})
for any $x,y\in B_{c_1 a_1}(0)$ we obtain
\begin{equation}\label{bi-Lip-on-grad}
K_1 a_1 |x-y| \leq |\nabla \psi(x) - \nabla \psi(y) | \leq K_2 |x-y|, 
\end{equation}
with constants $K_2 = K_2(d, || \psi||_{C^2}) \geq 1$ and $K_1=K_1(d)\leq 1$.
 This completes the proof of part (i).

\vspace{0.2cm}

Since $\partial^2_{\alpha \beta} \psi$ for each $1\leq \alpha,\beta\leq d$ has linear modulus of continuity, 
the claim of (ii) follows easily by Mean-Value Theorem
with $c_1$ sufficiently small. We now proceed to (iii).

\vspace{0.2cm}

By (\ref{bi-Lip-on-grad}) the mapping $\nabla \psi$
is invertible in a neighbourhood of the origin, and 
(iii) is simply an effective version of Inverse Mapping Theorem.
The desired bound follows from the estimate $|| ( \mathrm{Hess} \psi(0))^{-1} ||\asymp a_1^{-1} $,
$C^3$-smoothness of $\psi$, and 
\cite[Theorem 1.1]{Wang}\footnote{Observe that the function $\psi(x) = a_1x_1^2+...+a_d x_d^2$
manifests that order of the radius of the ball in (iii) is generally the best possible.
Also, since $\psi\in C^3$, here one may have a direct treatment for (iii)
by a well-known approach to Inverse Function Theorem.
Indeed, set $F(x) = \nabla \psi(x)$, $|x|\leq c_1 a_1$
and let $y\in \R^d$ be fixed. We need to determine 
the range of $y$ where the equation $F(x) =y$ has a solution in $x$ from $B(0, c_1 a_1)$.
For this, one may utilize the celebrated method of Newton for finding roots of equations
by studying the mapping $G(x)=x- (\nabla F(0))^{-1} (F(x)-y) $, where the Jacobian
of $F$ is the Hessian of $\psi$. Clearly
$F(x) =y$ iff $G(x) = x$, i.e. it is enough to figure out when $G$ is a contraction.
The latter can be resolved easily relying on the $C^3$-smoothness of $\psi$,
and determining the range of $y$ when 
$G$ maps the closed ball $\overline{B}(0,c_0 a_1)$ into itself and has differential of norm less than 1.
The details are easy to recover and we omit them.
}.

\vspace{0.2cm}
The proof of the lemma is complete.
\end{proof}

\subsection{A prototypic domain}\label{sec-prot-domain}


We introduce a class of domains, call them \emph{prototypes}, which will be used
in the proof of Theorem \ref{Thm-slow-Dirichlet}.
Let $\mathcal{P}$ be a convex polytope, i.e. a convex bounded domain, which is an
intersection of a finite number of halfspaces. We assume that $\mathcal{P}\subset \{x\in \R^d: \ x_d\leq 0\}$
and that $0\in \R^d$ is an inner point of $\partial \mathcal{P} \cap \{x_d=0\}$.
We fix some $\Pi_0 \Subset \partial \mathcal{P} \cap \{x_d=0\}$, a $(d-1)$-dimensional closed ball
centred at 0.
Now let $D_0\subset \mathcal{P}$ be a bounded domain having the following properties:
\begin{itemize}
\item[{\normalfont(P1)}] $D_0$ is convex with $C^\infty$ boundary,
\vspace{0.1cm}
\item[{\normalfont(P2)}] $\partial D_0 \cap \{ x\in \R^d: \ x_d=0 \}=\Pi_0$, 
\vspace{0.1cm}
\item[{\normalfont(P3)}] if $x\in \partial D_0$ with $x_d\neq 0$, the Gaussian curvature of $\partial D_0$
at $x$ is strictly positive,
\vspace{0.1cm}
\item[{\normalfont(P4)}] we fix some ball $ B_0$ lying compactly inside $D_0$.
\end{itemize}

\vspace{0.1cm}

Typically we will embed the whole construction inside a given large domain $X$.
Existence of $D_0$ satisfying (P1)-(P4) follows directly, as a special case, from an elegant construction
due to M. Ghomi in connection with smoothing of convex polytopes, see \cite[Theorem 1.1]{Ghomi}.
The following picture gives a schematic view of the construction.

\begin{figure}[htb] 
\centering \def\svgwidth{300pt} 
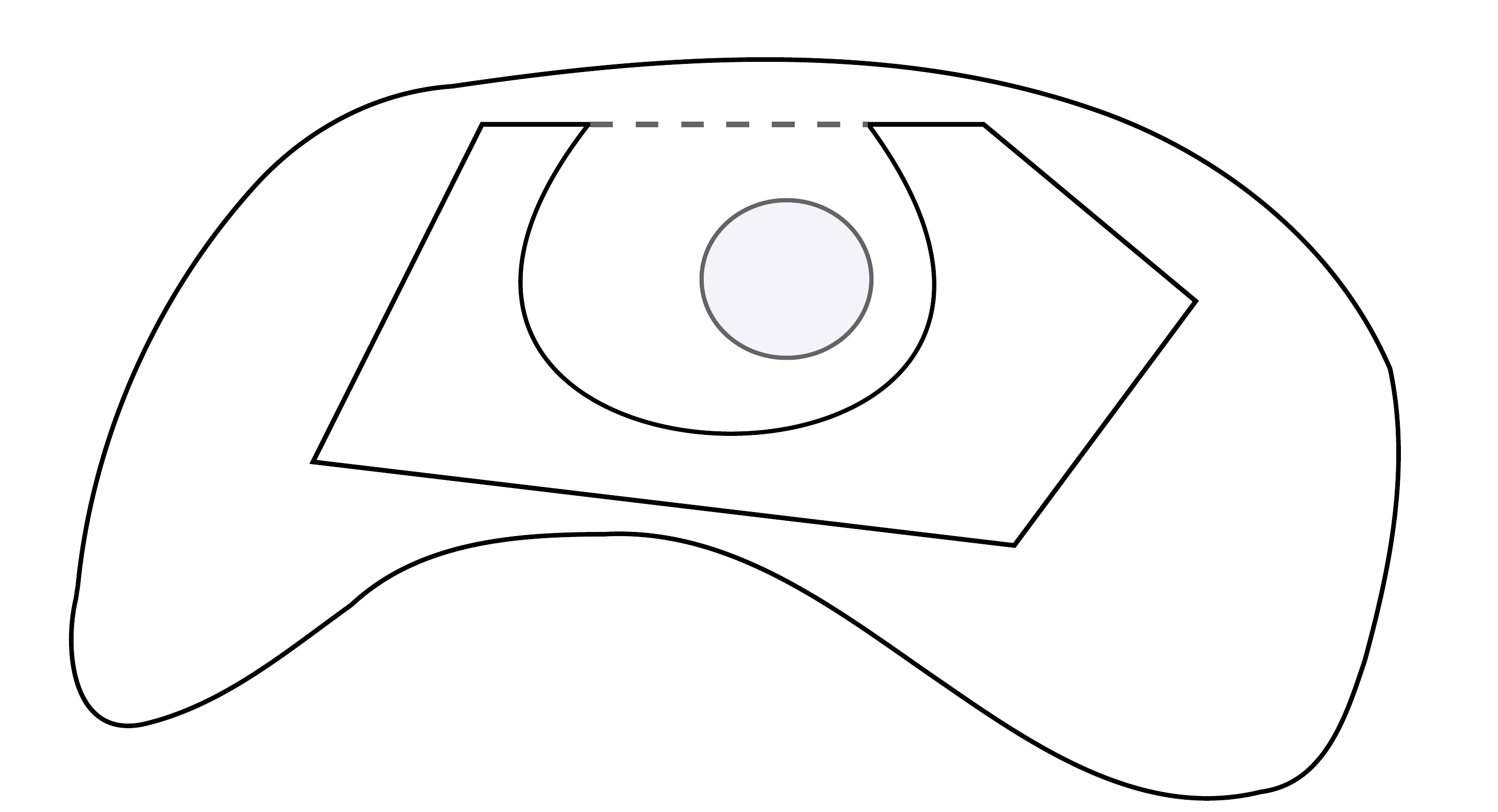 
\caption{\footnotesize{A prototypic domain $D_0$ obtained as smooth approximation of a polytope.
Here $X$ is some fixed domain containing $0\in \R^d$ in its interior.
Then $\mathcal{P}$ is any convex polygon sitting inside $X\cap\{x\in \R^d: x_d\leq 0  \}$,
with non-empty interior, and with part of its flat boundary lying on the hyperplane $\{x_d=0\}$.
We next take a closed flat ball $\Pi_0$, the dashed part on the boundary of $\mathcal{P}$,
and invoke \cite[Theorem 1.1]{Ghomi}. Finally, a ball $B_0$ is fixed compactly inside $D_0$.}}
\end{figure}

The following notation will be in force throughout the section.
Set $\Gamma_0 := \partial D_0$ and for $\delta>0$
denote $\Gamma_\delta = \{x\in \Gamma_0: \ \mathrm{dist}(x,\Pi_0) \geq \delta \}$,
where $\Pi_0$ is the $(d-1)$-dimensional ball fixed from (P2) above.
Define
\begin{equation}\label{def-of-kappa}
\kappa(\delta) =\min\limits_{x\in \Gamma_\delta} \min\limits_{1\leq \alpha \leq d-1} \kappa_\alpha(x),
\end{equation}
where $\kappa_\alpha(x)$ is the $\alpha$-th principal curvature of $\Gamma_0$ at $x$,
and the minimum over $\Gamma_\delta$ exists in view of the smoothness of $\Gamma_0$
and compactness of $\Gamma_\delta$. In the sequel we assume $\delta>0$ is small enough
so that $\Gamma_\delta \neq \emptyset$. Due to property (P3) we have
\begin{equation}
\kappa(\delta)>0  \text{ and } \kappa(\delta) \searrow 0  \text{ as } \delta \to 0+.
\end{equation}

The next proposition is one of the key ingredients of the proof of Theorem \ref{Thm-slow-Dirichlet}.

\begin{prop}\label{prop-main-for-Dirichlet}
There exists a modulus of continuity $\omega_0$ determined by the
decay rate of the function $\kappa(\delta)$ defined in (\ref{def-of-kappa})
such that 
for any smooth function $P: \Gamma_0 \to \R$, any $g\in C^\infty(\mathbb{T}^d)$
satisfying $\int_{\mathbb{T}^d} g =0$, any $y_0 \in \R^d$,  and any $M\in \OO(d)$ one has
\begin{equation}\label{int-in-prop}
\left| \int_{\Gamma_0 \setminus \Pi_0 } P(y) g(\lambda My +y_0) d\sigma(y) \right| \leq C 
\omega_0(\lambda) || P||_{C^1(\Gamma_0)} || g||_{C^d(\mathbb{T}^d)}, \qquad \forall \lambda \geq 1,
\end{equation} 
with a positive constant $C$ depending only on dimension $d$ and
embedding of $\Gamma_0$ in $\R^d$.
\end{prop}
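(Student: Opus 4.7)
The assumption $\int_{\mathbb{T}^d} g = 0$ means the Fourier expansion of $g$ has no constant term, so I expand
\[
g(\lambda M y + y_0) = \sum_{\xi \in \Z^d \setminus \{0\}} c_\xi(g)\, e^{2\pi i \xi \cdot y_0}\, e^{2\pi i \lambda \,(M^T\xi) \cdot y},
\]
and plug this into the integral, interchanging sum and integral using that $c_\xi(g) = O(|\xi|^{-d-1})$ from $g \in C^\infty(\mathbb{T}^d)$. This reduces the task to bounding, uniformly in directions $\eta \in \mathbb{R}^d \setminus\{0\}$, the oscillatory integral
\[
I(\mu; \eta) := \int_{\Gamma_0 \setminus \Pi_0} P(y)\, e^{2\pi i \mu\, \eta \cdot y}\, d\sigma(y), \qquad \mu := \lambda |M^T \xi| = \lambda|\xi|,
\]
where I have normalized $\eta$ to be a unit vector; the key observation is that $|M^T \xi| = |\xi|$ since $M$ is orthogonal, so oscillation comes from $\lambda|\xi| \geq \lambda \geq 1$.

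The central step is to establish a bound of the form
\[
|I(\mu;\eta)| \;\lesssim\; \delta \|P\|_{C^1(\Gamma_0)} \;+\; \frac{C \|P\|_{C^1(\Gamma_0)}}{\bigl(\mu\, \kappa(\delta)^{2}\bigr)^{(d-1)/2}}, \qquad \forall\, \delta>0 \text{ small}.
\]
To prove this I split $\Gamma_0 \setminus \Pi_0 = (\Gamma_0 \setminus \Gamma_\delta) \cup \Gamma_\delta$. The part $\Gamma_0 \setminus \Gamma_\delta$ is a thin collar of $\Pi_0$ of surface measure $O(\delta)$ by the smoothness of $\Gamma_0$, and on it I simply bound $|e^{2\pi i \mu \eta \cdot y}| = 1$ to get the first term. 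On $\Gamma_\delta$, which has strict lower bound $\kappa(\delta)$ on all principal curvatures, I use a finite partition of unity, localize to charts where $\Gamma_\delta$ is a graph $y_d = \psi(y')$ after an orthogonal change of coordinates adapted to the patch, and apply the method of stationary phase to
\[
\int P(y', \psi(y'))\, \sqrt{1+|\nabla \psi|^2}\, e^{2\pi i \mu\, [\eta' \cdot y' + \eta_d \psi(y')]} \, dy'.
\]
Critical points of the phase solve $\nabla \psi(y') = -\eta'/\eta_d$, i.e.\ occur where the outward normal of $\Gamma_0$ is parallel to $\eta$; by the Gauss map and strict positivity of the Gaussian curvature on $\Gamma_\delta$ there is at most a controlled number of such points per chart. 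Lemma \ref{Lem-Hessian-one-to-one} is exactly the quantitative inverse-function information needed here: it localizes the critical set and shows that in a neighbourhood of a stationary point, the Hessian of the phase (which equals $\mu \eta_d \,\mathrm{Hess}\,\psi$) has smallest eigenvalue of order $\mu \kappa(\delta)$. A standard stationary phase bound, combined with a non-stationary integration-by-parts argument away from the critical points, then yields the second term in the displayed inequality, with the $\|P\|_{C^1}$ arising from a single integration by parts in the non-stationary region (no higher derivatives of $P$ are needed because we only extract one power of $\mu$ beyond the $(d-1)/2$ stationary phase decay, but already the stationary phase alone gives a bound without derivatives of $P$).

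Putting the pieces together, the full estimate is
\[
|I(\lambda |\xi|; \eta)| \;\lesssim\; \|P\|_{C^1(\Gamma_0)} \left[ \delta + \bigl(\lambda |\xi|\, \kappa(\delta)^2 \bigr)^{-(d-1)/2}\right].
\]
Summing against $|c_\xi(g)|$, since $|c_\xi(g)| \lesssim \|g\|_{C^d(\mathbb{T}^d)} |\xi|^{-d}$ and $\sum_{\xi \neq 0} |\xi|^{-d-(d-1)/2}$ converges for $d \geq 2$, I obtain
\[
\left| \int_{\Gamma_0 \setminus \Pi_0} P(y) g(\lambda M y + y_0)\, d\sigma(y) \right| \;\lesssim\; \|P\|_{C^1(\Gamma_0)} \|g\|_{C^d(\mathbb{T}^d)}\, F(\lambda, \delta)
\]
where $F(\lambda,\delta) = \delta + (\lambda \kappa(\delta)^2)^{-(d-1)/2}$. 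Finally, I define
\[
\omega_0(\lambda) := \inf_{\delta > 0} F(\lambda, \delta),
\]
and verify that $\omega_0$ is a modulus of continuity in the sense of Definition~\ref{def-mod-contin}: since $\kappa(\delta) \searrow 0$ as $\delta \to 0^+$ and is strictly positive for $\delta > 0$, the two terms of $F$ balance at some $\delta(\lambda) \to 0$ as $\lambda \to \infty$, and $\omega_0(\lambda) \to 0$. The rate of this decay is controlled entirely by the decay rate of $\kappa(\delta)$, which in turn depends only on the embedding of $\Gamma_0$ in $\R^d$, as required. The crucial independence from $M$ and $y_0$ is automatic: $|M^T \xi| = |\xi|$ and $y_0$ contributes only the unimodular factor $e^{2\pi i \xi \cdot y_0}$ which drops out upon taking absolute values.

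The main obstacle is the quantitative stationary phase bound on $\Gamma_\delta$ with constants depending explicitly on $\kappa(\delta)$ rather than on the geometry of a single fixed patch; in particular one must check that, via Lemma \ref{Lem-Hessian-one-to-one}, the radii of the local charts on which the stationary phase method applies can be controlled from below uniformly in terms of $\kappa(\delta)$, and that the number of such charts needed to cover $\Gamma_\delta$ grows at worst polynomially in $1/\kappa(\delta)$.
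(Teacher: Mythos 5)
Your proposal follows essentially the same route as the paper: Fourier-expand $g$ (using $c_0(g)=0$), split $\Gamma_0\setminus\Pi_0$ into a collar of $\Pi_0$ of surface measure $O(\delta)$ plus the uniformly curved part $\Gamma_\delta$, localize on $\Gamma_\delta$ with a partition of unity into graph charts, estimate the resulting oscillatory integrals by distinguishing the non-stationary and stationary regimes via Lemma \ref{Lem-Hessian-one-to-one}, and finally optimize the resulting bound $\delta + \lambda^{-\theta}\kappa(\delta)^{-N}$ over $\delta$ to produce $\omega_0$. Two small points of divergence. First, in the stationary case the paper does not invoke the full stationary-phase asymptotics with its $(\lambda|\xi|)^{-(d-1)/2}$ decay; instead it inserts a cutoff $h(\lambda^{1/2}x')$ at scale $\lambda^{-1/2}$ around the critical point (trivially bounded by $\lambda^{-(d-1)/2}$) and performs a single integration by parts outside, in cones where one partial derivative of the phase is bounded below by $Ca_1|x'|$, settling for the cruder overall rate $\lambda^{-1/2}$ — which, as the paper notes in a footnote, suffices because only the existence of \emph{some} modulus of continuity is needed. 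Your stronger claimed rate is plausible but requires a quantitative stationary-phase lemma whose constants are tracked explicitly in the lower Hessian bound $\kappa(\delta)$ and in the chart radius $L_\delta\asymp a_1^2$ (and whose amplitude includes the cutoffs $\varphi_{\delta,j}$ with $\|\nabla\varphi_{\delta,j}\|_\infty\lesssim L_\delta^{-1}$); you correctly flag this as the main obstacle, and the paper's elementary argument is precisely how it sidesteps it. Second, the bound $|c_\xi(g)|\lesssim\|g\|_{C^d}|\xi|^{-d}$ is not by itself summable over $\Z^d\setminus\{0\}$ (the series $\sum|\xi|^{-d}$ diverges), and the collar term carries no extra decay in $\xi$; to conclude with the stated $\|g\|_{C^d}$ norm you need the sharper summability estimate $\sum_{\xi\neq 0}|c_\xi(g)|\lesssim\big(\sum_{|\alpha|=d}\|\nabla^\alpha g\|_{L^2}^2\big)^{1/2}$, which is the paper's inequality (\ref{g-coeff-est}) quoted from \cite{ASS1}. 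Neither issue affects the correctness of the overall strategy.
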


\begin{remark}
One may claim a decay of the integral in (\ref{int-in-prop})
relying on \cite{Lee-Shah} for example, however without
any explicit bounds we have in the current formulation
and which we need for applications.
The proof of this proposition is based on
adaptation of methods from \cite{ASS1}-\cite{ASS2}
both of which work
with strictly convex domains, showing that integrals similar
to (\ref{int-in-prop}) and involving singular kernel (namely, Poisson's kernel) decay with some prescribed algebraic rate as $\lambda \to \infty$.
The difference of the current case from \cite{ASS1}-\cite{ASS2}
is that on one hand here we do not have a singularity introduced by an integration kernel,
which gives an extra freedom to the entire analysis.
On the other hand the strict convexity of the hypersurface deteriorates,
and we have integration over a hypersurface with boundary;
both of these factors introduce some technical difficulties
which entail somewhat refined analysis at certain points.
\end{remark}

\begin{proof}[Proof of Proposition \ref{prop-main-for-Dirichlet}]
The proof is partitioned into few steps.
\vspace{0.1cm}

\noindent \textbf{Step 1. Localization.} We localize the integral of (\ref{int-in-prop})
in a neighbourhood of each point $z\in \Gamma_0$ of positive curvature.
Fix $\delta>0$ small.
The hypersurface $\Gamma_0$ is locally a graph of a smooth function, thus
there exists $r>0$ small such that for any $z\in \Gamma_{2\delta}$ 
there is an orthogonal transformation $\mathcal{R}: \R^d \to \R^d$ satisfying
\begin{equation}\label{Gamma-is-loc-graph}
(\mathcal{R}(\Gamma_0 - z) ) \cap B_r(0) = \{(x', \psi(x')): \ |x'|\leq r \},
\end{equation}
where $x'=(x_1,...,x_{d-1})$, $\psi$ is smooth on $ \{x'\in \R^{d-1}: \ |x' | \leq 2r \}$,
$\psi(0)= |\nabla \psi(0)|=0$ and for the Hessian of $\psi$ we have
\begin{align}
\label{hessian-is-diag} \mathrm{Hess} \psi(0) =& \mathrm{diag} (a_1,...,a_{d-1}) \in M_{d-1}(\R), \\
\label{hessian-is-diag2} 0< \kappa(\delta) \leq& a_1\leq a_2\leq ...\leq a_{d-1} \leq C_0.
\end{align}
Here $a_\alpha$ is the $\alpha$-th
principal curvature of $\Gamma_0$ at $z$, and the lower bound of (\ref{hessian-is-diag2}) is due to (\ref{def-of-kappa}),
while the universal upper bound, as well as the bound $|\mathrm{Hess} \psi(x')|\leq C_0$,
for any $|x'|\leq 2r$, are both due to the smoothness of $\Gamma_0$.
Also, $r$ is independent of $z$ and $\delta$, and depends on the embedding of hypersurface $\Gamma_0$
in $\R^d$.

By Lemma \ref{Lem-Hessian-one-to-one} there exist constants
$K_1<K_2$ and $c_0<c_1$ controlled by dimension $d$ and $C^3$-norm of $\psi$,
and independent of the principal curvatures of $\Gamma_0$, such that
\vspace{0.1cm}
\begin{itemize}
\item[(a)] $ K_1 a_1 |x'| \leq |\nabla \psi(x') | \leq K_2 |x'|$ for all $|x'|\leq c_1 a_1$,
\vspace{0.1cm}
\item[(b)] for any $1\leq \alpha, \beta \leq d-1$
setting $\delta_{\alpha \beta}$ to be the Kronecker symbol, we
get
$$
\left| \partial_{\alpha \beta}^2 \psi (x') - a_\alpha \delta_{\alpha \beta} \right| \leq \frac{a_1}{10 d}, \qquad 
|x'|\leq c_1 a_1 ,
$$

\item[(c)] $|y'| < c_0 a_1^2 $ implies that there exists a unique $|x'| < c_1 a_1$ so that $\nabla \psi(x') =y'$.
\end{itemize}

\vspace{0.1cm}
Denote 
\begin{equation}\label{def-of-L-delta}
L_\delta = \frac{ c_0 }{4d K_2} a_1^2
\end{equation}
and consider a family of balls $\mathcal{B} =\{ B(z,\frac 15 L_\delta): \ z\in \Gamma_{2\delta}  \}$.
Clearly $\mathcal{B}$ covers $\Gamma_{2\delta}$, hence by Vitali covering lemma there exists
a finite collection of disjoint balls 
$\mathcal{B}_0  = \{ B(z_j, \frac 15 L_\delta): \ j=1,...,M_\delta  \}\subset \mathcal{B}$
such that 
\begin{equation}\label{gamma-delta-inclusion}
\Gamma_{2\delta} \subset \bigcup\limits_{j=1}^{M_\delta}  B(z_j, L_\delta) =: \widetilde{\Gamma}_{2\delta}.
\end{equation}
As $\Gamma_0$ is a graph in $L_\delta$-neighbourhood of any $z\in \Gamma_{2\delta}$
we get $ \mathrm{vol}_{d-1}( B(z_j,L_\delta/5) \cap \Gamma_{2\delta} ) \asymp L_\delta^{d-1} $.
From this, (\ref{def-of-L-delta}) and (\ref{hessian-is-diag2}) it easily follows that
\begin{equation}\label{est-on-M-delta}
M_\delta \leq C L_\delta^{-(d-1)}  \leq C (\kappa(\delta))^{-2(d-1)},
\end{equation}
with an absolute constant $C$.
For $1\leq j \leq M_\delta $ set $B_j: =B(z_j, L_\delta) $,
we now define a smooth partition of unity subordinate to these balls.
Fix a smooth function $\Psi:\R^d\to [0,1]$, such that
$\Psi(x) = 0$, for $|x|\geq 1$ and $\Psi(x)=1$ for $|x|\leq 1$.
Define $ \Psi_{\delta, j}(x) = \Psi(L_\delta^{-1}(x- z_j))$, then
clearly $\mathrm{supp} \Psi_{\delta, j} \subset B_j $.
Now let 
$ \varphi_{\delta, j}:=\left(  \sum\limits_{i=1}^{M_\delta} \Psi_{\delta, j} \right)^{-1} 
\Psi_{\delta,j}$.
By construction we have that $\varphi_{\delta, j}$ is supported in $B_j$,
$\sum\limits_{j=1}^{M_\delta} \varphi_{\delta, j} =1$ on $\Gamma_{2\delta}$, and
\begin{equation}
| \nabla \varphi_{\delta, j} (x) |\leq C L_\delta^{-1}, \qquad x\in \R^d,
\end{equation}
where the constant $C$ is independent of $j$ and $\delta$.
Now for $1\leq j \leq M_\delta$ define 
\begin{equation}\label{def-of-I-j}
I_j :=\int_{\Gamma_0} P(y)  g(\lambda My +y_0 ) \varphi_{\delta, j}(y) d\sigma(y).
\end{equation}

\vspace{0.2cm}

\noindent \textbf{Step 2. Reduction to oscillatory integrals.} In (\ref{def-of-I-j}) 
make a change of variables by setting $y=\mathcal{R}^{-1}z +z_j$,
where the orthogonal transformation $\mathcal{R}$ is fixed as in (\ref{Gamma-is-loc-graph}).
We next observe that the integral in (\ref{def-of-I-j}) is over $B(z_j, L_\delta)\cap \Gamma_0$, 
and $\Gamma_0$ is a graph in the $L_\delta$-neighbourhood of $z_j$.
With these in mind we make the change of variable in $I_j$ and then pass to volume integration
obtaining by so
\begin{equation}\label{I-j-2}
I_j=\int\limits_{|z'|\leq L_\delta}
(P\varphi_{\delta, j} )(z_j+\cR^{-1}(z',\psi(z')))  g \left(\lambda M z_j + \lambda M \cR^{-1} (z',\psi(z')) +y_0 \right)
(1+|\nabla \psi(z')|^2)^{1/2} d z' .
\end{equation}

For $t\in \R$ set $\Ex(t) := e^{2\pi i t}$.
Next, for $\xi\in \Z^d$ let $c_\xi(g)$ be the $\xi$-th Fourier coefficient of $g$.
According to the assumption of the proposition we have $c_0(g)=0$.
Using the smoothness of $g$ and expanding it into Fourier series
we get
\begin{multline}\label{exp of g}
g \left( \lambda M z_j + \lambda M \cR^{-1} (z',\psi(z')) +y_0 \right) = \\
\sum\limits_{\xi \in \Z^d \setminus\{0\} } c_\xi (g ) \Ex \left( \lambda \xi  \cdot M z_j +\xi \cdot y_0 \right)
\Ex \left[ \lambda  \cR M^T \xi \cdot  (z',\psi(z'))   \right],
\end{multline}
where we have also used the orthogonality of $M$ and $\cR$.
For $\xi\in \Z^d$ set $\cR M^T \xi := \eta := |\eta|(n', n_d)$
with $(n', n_d)\in \mathbb{S}^{d-1}$. By orthogonality of $M$ and $\cR$ we have $|\eta|=|\xi|$.
Next, define
$$
F(z') = n' \cdot z' + n_d \psi(z'),
$$
and
$$
\Phi_j(z') = (P\varphi_{\delta,j} )(z_j+\cR^{-1}(z',\psi(z')))  (1+|\nabla \psi(z')|^2)^{1/2} .
$$
From the definition of the cut-off $\varphi_{\delta,j}$ we have
\begin{equation}\label{C-1-norm-of-Phi}
|| \Phi_j||_{C^1} \leq C || P||_{C^1} L_\delta^{-1},
\end{equation}
uniformly for all $1\leq j \leq M_\delta$ with an absolute constant $C$ .
With these notation, from (\ref{I-j-2}) and (\ref{exp of g}) we get
\begin{equation}\label{I-j-as-sum}
I_j  = \sum_{\xi \in \Z^{d}\setminus \{0\} } c_\xi(g) \Ex \left( \lambda \xi  \cdot M z_j +\xi \cdot y_0  \right)
I_{j,\xi},
\end{equation}
where
\begin{equation}\label{I-j-xi}
I_{j,\xi} =\int_{|z'|\leq L_\delta} \Phi_j(z') \Ex \left[ \lambda |\xi| F(z')     \right] dz'.
\end{equation}

\noindent \textbf{Step 3. Decay of $I_j$.} 
We analyse the decay of each $I_{j,\xi}$
in two distinct cases.

\indent \textbf{Case 1.} $|n'|\geq c_0 a_1^2/2$.

Fix $1\leq \alpha \leq d-1$ so that $|n_\alpha| = \max_{1\leq \beta \leq d-1} |n_\beta|$,
clearly $|n_\alpha|\geq |n'|/d$.
From this, definition of $F$, (\ref{def-of-L-delta}) and assertion (a) of  Step 1, on the support of $\Phi_j$ we have
\begin{multline}\label{case1-est of phase from below}
|\partial_\alpha F(z')| = | n_\alpha + n_d \partial_\alpha \psi (z')| \geq |n_\alpha| -|\partial_\alpha \psi (z')| \geq \\
\frac{c_0 a_1^2}{2d} - K_2 L_\delta \geq \frac{c_0 a_1^2}{2d} - K_2 \frac{c_0 a_1^2}{4d K_2} =
\frac{c_0 }{4d} a_1^2.
\end{multline}
Integrating by parts in $I_{j,\xi}$ in the $\alpha$-th coordinate,
and then employing (\ref{case1-est of phase from below}) and (\ref{C-1-norm-of-Phi})
we get
$$
|I_{j,\xi}|\leq C (\lambda|\xi|)^{-1} \int_{|z'|\leq L_\delta}
\left| \partial_{\alpha} \left( \frac{\Phi_j}{\partial_\alpha F } (z') \right)  \right| dz'
\leq C (\lambda|\xi|)^{-1} L_\delta^{d-1} a_1^{-4} || P||_{C^1} L_\delta^{-1},
$$
with an absolute constant independent of $j$.
From here and (\ref{def-of-L-delta}) we have
\begin{equation}\label{I-j-xi-in-case1}
|I_{j,\xi}| \leq C (\lambda|\xi|)^{-1} ||P ||_{C^1} a_1^{2d-8} .
\end{equation}

\vspace{0.2cm}

\indent \textbf{Case 2.} $|n'|<c_0 a_1^2/2$.

Since $|(n',n_d)|=1$ and $c_0$ is small we have $|n_d|> 1/2$ and hence
$\left| \frac{n'}{n_d} \right|< c_0 a_1^2 $.
By (c) there exists a unique $x_0'\in B(0,c_1 a_1)$  such that $\nabla \psi (x_0') = -\frac{n'}{n_d}$,
and hence $\nabla F(x_0')=0$.
Observe that $x_0'$ is not necessarily from the support of $\Phi_j$.
For $1\leq \alpha \leq d-1$ consider the cone
$$
\mathcal{C}_\alpha=\left\{ z'\in \R^{d-1}: \ |z_\alpha |\geq \frac{1}{2 \sqrt{d-1} } |z'| \right \},
$$
clearly $\cup_{\alpha=1}^{d-1} \mathcal{C}_\alpha =\R^{d-1}$.
Now for fixed $1\leq \alpha \leq d-1$ take $z'\in \mathcal{C}_\alpha$
such that $|x_0'+z'|<c_1 a_1$.
Using estimate (b) of Step 1, the facts that $|n_d|>1/2$ and $\nabla F(x_0')=0$,
and invoking Mean-Value Theorem, for some $\tau$ on the segment $[x_0',x_0'+z'] $ we get
\begin{multline}\label{case2-est-of-deriv-F}
\left| \frac{\partial F}{\partial z_\alpha} (x_0' +z')  \right| = 
\left| \frac{\partial F}{\partial z_\alpha} (x_0' +z') - \frac{\partial F}{\partial z_\alpha} (x_0' ) \right | 
=\left| \left(\nabla \frac{\partial F}{\partial z_\alpha} \right) (\tau)\cdot z'  \right| \geq \\
|n_d| \left( |\partial^2_{\alpha \alpha}\psi(\tau) z_\alpha| -
\sum_{\beta =1, \ \beta\neq \alpha}^{d-1} |\partial^2_{\beta \alpha}\psi (\tau) z_\beta| \right) \geq C a_1 |z'|,
\end{multline}
with an absolute constant $C>0$.
Since the cones $\{C_\alpha\}$ cover $\R^{d-1}$,
for each $1\leq \alpha \leq d-1$ there exists $\omega_\alpha$ supported in $\mathcal{C}_\alpha$, smooth away
from the origin and homogeneous of degree 0
such that
$$
\sum\limits_{\alpha=1}^{d-1} \omega_\alpha (z') =1, \qquad \forall z'\neq 0.
$$
Observe that since each $\omega_\alpha$ is homogeneous of degree 0, for all $1\leq \alpha \leq d-1$
and non-zero $z'\in \R^{d-1}$ near the origin we have
\begin{equation}\label{deriv-of-omega}
\left|  \partial_\alpha \omega_\alpha  (z')  \right| \leq C \frac{1}{|z'|}, 
\end{equation}
with an absolute constant $C$.

Now fix a non-negative function $h\in C^{\infty}(\R^{d-1})$
satisfying $h(x')=0$ for $|x'|\geq 2$ and $h(x')=1$ for $|x'|\leq 1$.
Setting $x'=z'-x'_0$ form (\ref{I-j-xi}) we get
$$
I_{j,\xi}=\int_{\R^{d-1}} \Phi_j(x_0'+x') \exp \left[\lambda |\xi| F(x_0'+x') \right] dx' := I_{j,\xi}^{(1)} +I_{j,\xi}^{(2)},
$$
where
$$
I_{j,\xi}^{(1)} = \int_{\R^{d-1} } h( \lambda^{1/2} x' ) \Phi_j(x_0'+x')
\exp \left[ \lambda |\xi| F(x_0'+x') \right] dx' .
$$
From the definition of $h$ we have
\begin{equation}\label{est-of-I-xi-1}
|I_{j,\xi}^{(1)} | \leq C \lambda^{-(d-1)/2} || P||_{L^\infty}.
\end{equation}
The second part we decomposed as $I_{j,\xi}^{(2)}= \sum\limits_{\alpha=1}^{d-1} I_{j,\xi}^{(2),\alpha} $ where
$$
I_{j,\xi}^{(2),\alpha} =
\int_{\R^{d-1}} \omega_\alpha(x') (1-h(\lambda^{1/2} x')) \Phi_j(x_0'+x') \exp \left[ \lambda |\xi| F(x_0'+x') \right] dx' .
$$
We now invoke partial integration in $\alpha$-th coordinate,
and with the aid of estimates (\ref{case2-est-of-deriv-F}), (\ref{deriv-of-omega})
and (\ref{C-1-norm-of-Phi}) we get
\begin{multline*} 
I_{j,\xi}^{(2),\alpha}  \lesssim   
\frac{(\lambda |\xi|)^{-1} }{a_1} \int\limits_{|x_0' + x'|<L_\delta}
 \frac{1}{|x'|} \left[ \frac{1}{|x'|} \big( 1-h(\lambda^{1/2} x') \big) L_\delta^{-1} || P||_{C^1}
+ \lambda^{1/2}(\partial_{\alpha}h)(\lambda^{1/2} x') || P||_{L^\infty}   \right] dx' + \\  
\frac{(\lambda |\xi|)^{-1} }{a_1^2} \int\limits_{|x_0' + x'|<L_\delta} \frac{1}{|x'|^2} \big( 1-h(\lambda^{1/2} x') \big) || P||_{L^\infty} dx'.
\end{multline*}
We now use the definition of $h$,
and that of $L_\delta$ given by (\ref{def-of-L-delta}),
and employ integration in spherical coordinates by so appearing to
\begin{equation}\label{est-of-I-xi-2}
I_{j,\xi}^{(2),\alpha}  \lesssim 
\frac{(\lambda |\xi|)^{-1} }{a_1} \lambda^{1/2} L_\delta^{-1} || P ||_{C^1} + 
\frac{(\lambda |\xi|)^{-1} }{a_1^2} \lambda^{1/2} || P ||_{L^\infty} 
\end{equation}
with an absolute constant\footnote{One may get more precise decay rate in $\lambda$ depending on dimension,
cf. \cite[p. 76]{ASS2}, however the crude estimates we have here are enough for our purpose.} $C$.
By (\ref{est-of-I-xi-1}) and (\ref{est-of-I-xi-2}), along with the definition (\ref{def-of-L-delta}) we get
\begin{equation}\label{I-j-xi-in-case2}
|I_{j,\xi}|\leq C \lambda^{-1/2} || P||_{C^1} a_1^{-3}.
\end{equation} 

\vspace{0.2cm}

\noindent \textbf{Step 4. Final estimates.} 
We now put everything together.
By \cite[Lemma 2.3]{ASS1} we have
\begin{equation}\label{g-coeff-est}
\sum_{\xi\in \Z^d\setminus\{ 0 \} } |c_\xi(g)| \leq C \left(  \sum\limits_{\alpha \in \Z^d_+, \ |\alpha|=d}
 ||\nabla^\alpha g||_{L^2(\mathbb{T}^d)}^2 \right)^{1/2},
\end{equation}
where $\nabla^\alpha =\partial^{\alpha_1}_1 \circ ... \circ \partial^{\alpha_d}_d $,
$|\alpha|=|\alpha_1|+...+|\alpha_d|$, and constant $C=C(d)$.

Now let $I(\lambda)$ be the integral in (\ref{int-in-prop}).
From (\ref{def-of-I-j}) and (\ref{gamma-delta-inclusion}) we get
\begin{equation}\label{I-lambda-est1}
I(\lambda)=\sum\limits_{j=1}^{M_\delta} I_j + 
\int_{(\Gamma_0 \setminus \Pi_0)\setminus \widetilde{\Gamma}_{2 \delta}} P(y) g(\lambda M y + y_0) d \sigma(y).
\end{equation}
The definition of $\widetilde{\Gamma}_{2 \delta}$ infers
$\mathrm{vol}_{d-1}((\Gamma_0 \setminus \Pi_0)\setminus \widetilde{\Gamma}_{2 \delta}) \lesssim \delta$,
hence by (\ref{I-lambda-est1})
it follows that
\begin{equation}\label{I-lambda-est2}
|I(\lambda)|\lesssim \sum_{j=1}^{M_\delta}  |I_j| + \delta || P||_{L^\infty} ||g||_{L^\infty}.
\end{equation}
We thus get
\begin{align*}
  |I_j| &\leq \sum\limits_{\xi\neq 0} |c_\xi(g) | |I_{j,\xi}| \  \big( \text{by } (\ref{I-j-as-sum}) \big) \\
&\lesssim \sum\limits_{\xi\neq 0} |c_\xi(g) | 
\bigg[ (\lambda |\xi|)^{-1} ||P||_{C^1} a_1^{2d-8} + \lambda^{-1/2}||P||_{C^1} a_1^{-3} \bigg]
\big( \text{by } (\ref{I-j-xi-in-case1}) \text{ and }  (\ref{I-j-xi-in-case2}) \big) \\
&\lesssim \lambda^{-1/2} a_1^{-4} || P||_{C^1} \sum\limits_{\xi \neq 0} |c_\xi (g)| \\
&\lesssim \lambda^{-1/2} (\kappa(\delta))^{-4} || P||_{C^1} ||g||_{C^d} \ 
\big( \text{by } (\ref{g-coeff-est}) \text{ and } (\ref{hessian-is-diag2}) \big),
\end{align*}
where the constants are uniform in $1\leq j\leq M_\delta$ and $\delta>0$.
Using this bound on $I_j$ along with estimate (\ref{est-on-M-delta}) on $M_\delta$,
from (\ref{I-lambda-est2}) we get
\begin{equation}\label{I-lambda-last-est}
|I(\lambda)|\lesssim \big[\lambda^{-1/2} (\kappa(\delta))^{-2(d+1)} +\delta \big] || P||_{C^1} ||g||_{C^d},
\end{equation}
for any $\delta>0 $ small and any $\lambda \geq 1$. 
It is left to optimize the last
inequality in $\delta$, for which consider the function
$f(\delta) : = \delta (\kappa(\delta))^{2(d+1)} $
in the interval $(0,\delta_0)  $ where $\delta_0>0$ is small.
It follows from definition of $\kappa(\delta)$ in (\ref{def-of-kappa}) 
that $f$ is continuous and strictly increasing in $(0,\delta_0)$, and converges to $0$ as 
$\delta \to 0+$.
Hence for each $\lambda>0$ large enough there is a unique $0<\delta=\delta(\lambda)<\delta_0$ such that
$\lambda^{-1/2} = f(\delta(\lambda))$. Define $\omega_0 (\lambda) := \delta(\lambda)$
and observe that $\omega_0(\lambda) = f^{-1} (\lambda^{-1/2})$, where $f^{-1}$
is the inverse of $f$. It readily follows from 
the mentioned properties of $f$ that $\omega_0$ is a modulus of continuity.
Finally, for given $\lambda>0$ large, applying inequality 
(\ref{I-lambda-last-est}) with $\delta=\delta(\lambda)$
we get $|I(\lambda) | \lesssim \omega_0(\lambda) || P||_{C^1} ||g||_{C^d} $
completing the proof of the proposition.
\end{proof}

\begin{proof}[Proof of Theorem \ref{Thm-slow-Dirichlet}]
Recall that the coefficient matrix of (\ref{Dirichlet-bdd-domains}) is defined
on some fixed domain $X$. Take any $x_0$ in the interior of $X$
and consider the translated domain $X-x_0$.
Since the origin lies in the interior of $X-x_0$ we fix
$D_0 \Subset X-x_0$, any prototypic domain constructed in subsection \ref{sec-prot-domain}.
Following the notation of this section, we let
$\Gamma_0 := \partial D_0$ and by $\Pi_0 $ we denote the flat portion of $\Gamma_0$,
which by (P2) is a $(d-1)$-dimensional closed ball.
Finally, we fix a ball $B_0$ from property (P4) formulated above.

For a matrix $M\in \OO(d)$
let bounded domain $D_M$ be the rotated image of $D_0$ by $M$ as defined in subsection \ref{subsec-prelim}.
Set $\Gamma_M= \partial D_M$, as we have already discussed $\Gamma_M$ is smooth and
we have $\Gamma_M =\mathcal{M}(\Gamma_0) $.
Since $M$ is orthogonal, it follows that $\Pi_M: = \mathcal{M} \Pi_0 \subset \partial D_M$
is a $(d-1)$-dimensional ball lying in a hyperplane passing through the origin and having normal
equal to $M e_d$.
Finally, we fix a small closed neighbourhood $S_0\subset \mathbb{S}^{d-1}$ of $e_d$ 
with non-empty interior, such that
for any $n\in S_0$ if $M\in \OO(d)$ with $M e_d = n$, then the corresponding rotated domain $D_M$
lies in $X-x_0$ and compactly contains the ball $B_0$.

Fix a function $g\in C^\infty(\mathbb{T}^d)$ with the property $\int_{\mathbb{T}^d} g =0$,
and a unit vector $n\in S_0$ along with a matrix $M \in \OO(d)$ satisfying $M e_d = n$.
For $\e>0$ let $u_\e$ be the smooth solution to
\begin{equation}\label{osc-prob-in-proof} 
 -\nabla \cdot A(x) \nabla u_\e(x) = 0 \text{ in } D_M +x_0 \qquad \text{ and } \qquad u_\e(x) = g (x/ \e) \text{ on } \Gamma_M+x_0.
\end{equation} 
Since $g$ has mean zero, by (\ref{Dirichlet-bdd-domains-homo}) we get that $u_0$, the homogenized
solution corresponding to (\ref{osc-prob-in-proof}), is identically zero.
Now let $P_M: (D_M+x_0) \times (\Gamma_M+x_0) \to \R$ be the Poisson kernel for the pair $(A, D_M+x_0)$.
For $x\in D_M+x_0$ we have
\begin{equation}\label{u-e-two-parts}
u_\e (x) = \int_{\Gamma_M+x_0} P_M(x,y) g(y/ \e) d\sigma_M(y) = \int_{\Pi_M+x_0} + \int_{\Gamma_M \setminus \Pi_M+x_0} 
:= I_1 + I_2.
\end{equation}
Observe that $\Gamma_M \setminus \Pi_M$ is the part of the boundary of $D_M$ with non-vanishing (positive) Gaussian curvature,
while $\Pi_M$ is a flat ball (notice that principal curvatures are orthogonal invariants, hence do not change after rotation,
see for example \cite[Section 14.6]{GT}). The aim is to show that $I_2$
has some prescribed decay rate in $\e$ independently of $M$,
while the decay of $I_1$ can be made as slow as one wish.
Setting $\lambda = 1/\e$ and then translating the origin to $x_0$ and rotating the coordinate system by $M^T$ we get
$$
I_2=I_2(\lambda; x)=
 \int_{\Gamma_0 \setminus \Pi_0} P_M(x,M z+x_0) g( \lambda M z + \lambda x_0) d\sigma_0(z) .
$$
For each fixed $x\in B_0+x_0$ by (\ref{Poisson-rep}) the function $P_M(x, \cdot +x_0 )$ is $C^\infty$ on $\Gamma_M$, hence applying
Proposition \ref{prop-main-for-Dirichlet} we obtain
$$
|I_2| \leq C_0 \omega_0(\lambda ) ||P_M (x, \cdot +x_0) ||_{C^1(\Gamma_M)} ||g||_{C^d(\mathbb{T}^d)}, \qquad x\in B_0+x_0,
$$
with an absolute constant\footnote{Observe that the constant in Proposition \ref{prop-main-for-Dirichlet}
is independent of the shift and hence having $\lambda x_0$ in $g$ still results
in a constant independent of $\lambda$.}
$C_0$. By Lemma \ref{Lem-Poisson-dist-smooth} we have
$ ||P_M (x, \cdot + x_0) ||_{C^1(\Gamma_M)} \leq C_0 $ uniformly for $x\in B_0 +x_0$ and matrix $M$
as above.
The latter implies that
\begin{equation}\label{decay-of-I-2}
|I_2(\lambda; x)| \leq C_0 \omega_0(\lambda ) ||g||_{C^d (\mathbb{T}^d) }, \qquad  x\in B_0+x_0.
\end{equation}

For the flat part of the integral we denote by $\mathbb{I}_{M}$ the characteristic
function of $\Pi_M$ and extend $P_M(x, \cdot )$ as zero outside $\Pi_M+x_0$.
With these notation we get
\begin{align*}
\numberthis \label{I-1} I_1= I_1(\lambda; x) = \int_{\Pi_M} P_M(x,y +x_0) g(\lambda y + \lambda x_0) d\sigma_M(y) &=  \\
 \int_{y\cdot n =0} P_M(x,y +x_0)\mathbb{I}_{M}(y) g(\lambda y + \lambda x_0) d \sigma(y) &= \\
 \int_{z_d= 0} P_M(x , Mz +x_0 ) \mathbb{I}_{M}(Mz  ) g(\lambda Mz +\lambda x_0 ) d\sigma_0(z) &= \\
 \int_{\R^{d-1}} P_M \left(x , M (z',0)+  x_0 \right) 
 \mathbb{I}_{M} \left(M (z',0)   \right) g \left(\lambda M(z',0)+ \lambda x_0 \right) d z' &.
\end{align*}

Consider the following 2-parameter family of functions
\begin{multline}
\mathcal{F}: = \{  F_{x,M}(z') : = P_M(x , M (z',0) +  x_0  ) \mathbb{I}_{M} (M (z',0) ), \  z'\in \R^{d-1}: \text{ where }  \\
\ \ x\in B_0 +x_0 , \ M\in \OO(d) \text{ with } M e_d =n \text{ and } n\in S_0   \}.
\end{multline}
Let us see that the family $\mathcal{F}$ satisfies all conditions of Lemma \ref{Lem-family-on-compact-supp}.
Indeed, the assumption (c') of the lemma is simply due to the construction of domain $D_0$
and orthogonality of matrices $M$.
Next, (c') combined with the estimate (\ref{dist-Poisson-smooth}) implies assumption (b)
of Lemma \ref{Lem-slow-conv-family-of-F}.
Items (d) and (e) of Lemma \ref{Lem-family-on-compact-supp} are due to 
(\ref{deriv-Poisson-smooth}) and (\ref{dist-Poisson-smooth}) correspondingly.
It is left to verify the assumption with the lower bound on the integrals
in part (a) of Lemma \ref{Lem-slow-conv-family-of-F}. The latter is due to
Corollary \ref{cor-Poisson-inf-rotated}.

We now apply Lemma \ref{Lem-family-on-compact-supp} to the family $\mathcal{F}$ for modulus of continuity $\lambda\longmapsto \omega(\lambda) + \omega_0^{1/2}(\lambda)$,
where $\omega$ is the one fixed in the formulation of the theorem, and
$\omega_0$ is determined from (\ref{decay-of-I-2}).
By doing so we get
an irrational normal $n\in S_0$, a function $g \in C^\infty(\mathbb{T}^d)$
with the property $\int_{\mathbb{T}^d} g =0$,
and a strictly increasing sequence of positive numbers $\{\lambda_k\}_{k=1}^\infty$
such that for any $M\in \OO(d)$ satisfying $M e_d=n$ from (\ref{I-1}) we have
\begin{equation}\label{I-1-on-lambda-k}
|I_1(\lambda_k; x)| \geq \omega(\lambda_k) + \omega_0^{1/2}(\lambda_k), \ \   x\in B_0+x_0,  \qquad k=1,2,... \ .
\end{equation}

Since $\omega_0$ decays at infinity, combining (\ref{I-1-on-lambda-k}) and (\ref{decay-of-I-2})
and setting $\e_k = 1/\lambda_k$, 
from the representation (\ref{u-e-two-parts})
for any $x\in B_0 +x_0$ we get
\begin{equation}\label{u-e-k}
|u_{\e_k}(x)| \geq |I_1(\e_k; x)|-|I_2(\e_k;x)| \geq \omega(\lambda_k) + \omega_0^{1/2}(\lambda_k) -
C_0 \omega_0(\lambda_k) ||g||_{C^d(\mathbb{T}^d)} \geq \omega(1/\e_k) ,
\end{equation}
if $k\geq 1$ large enough. 
Finally, as a domain $D$ of the Theorem we take $D_M+x_0$ where $D_M$ is any domain
having $n$ as the normal of its flat boundary, where $n$ is obtained from Lemma \ref{Lem-family-on-compact-supp} (note that $D$ is defined modulo $\OO(d-1)$), and as $D'$ we take $B_0+x_0$, where $B_0$ is the ball fixed above.
Since $\int_{\mathbb{T}^d} g =0$,
the homogenized problem (\ref{Dirichlet-bdd-domains-homo}) has only a trivial solution.
Thus (\ref{u-e-k}) completes the proof of part a) of the theorem.

\vspace{0.2cm}

We now prove part b). Observe that an argument similar to what we had for
(\ref{decay-of-I-2}) shows that $I_2(\lambda, x) \to 0 $ as $\lambda \to \infty$
for any $x\in D=D_M+x_0$. Hence, taking into account
decomposition (\ref{u-e-two-parts}), 
it is enough to show that $I_1(\lambda; x) \to 0$ as $\lambda \to \infty$
for all $x\in D$.
We may assume without loss of generality, by passing to a subset of $S_0$
if necessary, that $n_d\neq 0$, where the unit vector $n$ is fixed from part a).
Now take any subset $\Pi$ of $\Pi_M+x_0$ such that the projection of $\Pi$
onto $\R^{d-1}\times\{0\} $ is a $(d-1)$-dimensional rectangle,
which we will denote by $[a_1,b_1]\times ... \times [a_{d-1}, b_{d-1}]$.
To show that $I_1(\lambda; x) $ decays it is enough to prove that
the integral
\begin{equation}\label{I-lambda-over-Pi}
I(\lambda; x):= \int_{\Pi } P_M(x,y) g(\lambda y) d\sigma_M(y) 
\end{equation}
converges to 0 as $\lambda \to \infty$ for any $x\in D$. 
The latter follows easily.
Indeed, since $n_d\neq 0$ for $y\in \Pi$ we have
$y_d = -\frac{1}{n_d} ( n_1 y_1 +...+n_{d-1} y_{d-1} ) $ hence passing to volume integration
in (\ref{I-lambda-over-Pi}) and expanding $g$ into Fourier series,
for each $x\in D$ we get
\begin{equation}\label{I-lambda-over-Pi-rect}
I(\lambda; x) = \sum_{\xi \neq 0} c_\xi(g) 
\int_{a_1}^{b_1}...\int_{a_{d-1}}^{b_{d-1}} P_M(x, y_1,..., y_d )
\prod\limits_{k=1}^{d-1}  \Ex \left[ \left( \xi_k - \frac{n_k}{n_d} \xi_d  \right)  \lambda y_k \right]
d y_1... d y_{d-1},
\end{equation}
where $y_d =-\frac{1}{n_d} ( n_1 y_1 +...+n_{d-1} y_{d-1} )$,
and as before $\Ex(t) = e^{2\pi i t}$, $t\in \R$.
Since $n$ is irrational, it is easy to see that at least one of the exponentials
in the last expression is non-trivial, i.e. for any non-zero $\xi\in \Z^d $
 there is $1\leq k \leq d-1$ such that $ \xi_k - \frac{n_k}{n_d} \xi_d \neq 0$.
Using this and the smoothness of $P_M(x, \cdot)$ which is due to Lemma \ref{Lem-Poisson-dist-smooth},
in (\ref{I-lambda-over-Pi-rect}) for each non-zero $\xi$ we integrate by parts with respect to the corresponding $k$-th
 coordinate, obtaining by so that
 each integral in (\ref{I-lambda-over-Pi-rect}) decays
 as $\lambda \to \infty$. Observe as well that for each fixed $x$, integrals in (\ref{I-lambda-over-Pi-rect})
are uniformly bounded with respect to $\lambda$ and $\xi$ in view of (\ref{dist-Poisson-smooth}).
Finally, the series of Fourier coefficients of $g$ converges absolutely
due to the smoothness of $g$.
This, coupled with uniform boundedness and decay of integrals in (\ref{I-lambda-over-Pi-rect}),
easily implies that $I(\lambda; x)\to 0 $ as $\lambda \to \infty$ for each fixed $x\in D$,
completing the proof of part b) of the theorem.

The theorem is proved.
\end{proof}

\subsection{Concluding comments}
The reader may have observed that the approach we have here
has a potential to work for homogenization problems
when solutions to the underlying PDE admit integral representation
with some nice control for representation kernels.
For example, one should be able to treat the periodic homogenization
of Neumann boundary data with the methods developed in this note.
It also seems plausible, possibly with some more work, that one can study homogenization
of almost-periodic boundary data as well using a similar analysis.

The reason we can not readily allow (periodic) oscillations in the operator 
of the problem (\ref{Dirichlet-bdd-domains}) is due to the absence of
the necessary control over the Poisson kernel
corresponding to oscillating operator. In particular, we do not have uniform (with respect to $\e>0$)
bounds on $C^1$-norms similar to those in Lemma \ref{Lem-Poisson-dist-smooth}
at our disposal. One possible detour of this obstacle 
is the use of results on homogenization of Poisson kernel for the $\e$-problem
considered in \cite{Kenig-Lin-Shen-CPAM}, to reduce matters to a fixed operator,
where the analysis of the current paper can be utilized (cf. the proof of Theorem 1.7 of \cite{ASS2}).

\section*{Appendix}

We give some basic estimates on Green's and Poisson's kernels associated with divergence type elliptic operators in bounded domains. The estimates we will need are standard and well-known,
but in view of the absence of an explicit reference we include the proofs here.

Following subsection \ref{subsec-prelim}
for a matrix $M\in \OO(d)$ and domain $D\subset \R^d$
by $D_M$ we denote the rotated image of $D$ by the matrix $M$.
We saw already that
$D_M$ is a bounded domain with $C^\infty$ boundary, moreover
$\partial D_M =\mathcal{M} (\partial D)$, where the orthogonal transformation $\mathcal{M}$
is defined through $\mathcal{M}x = Mx$ for $x\in \R^d$.
Next, we let the coefficient matrix $A$ and domain $X$ be as in the formulation of Theorem \ref{Thm-slow-Dirichlet}.
Finally, we fix a bounded domain $D$ with $C^\infty $ boundary,
and such that for any $M\in \OO(d)$ the domain $D_M$ lies compactly inside $X$.

With these preliminary setup we now consider the operator
$\mathcal{L}:= - \nabla \cdot A(x) \nabla $, $x\in X$,
and let $G_M(x,y)$ be the Green's kernel for $\mathcal{L}$ in domain $D_M$, i.e.
for each fixed $y \in D_M$, $G_M(\cdot , y)$ solves 
$$
-\nabla_x \cdot  A(x) \nabla_x G_M(x,y) = \delta(x-y), \ \ x\in D_M
\qquad \text{ and } \qquad G_M(x,y)=0 , \ x\in \partial D_M,
$$
in a sense of distributions, where $\delta$ is the Dirac symbol.
We have the following properties.

\begin{itemize}
\indentdisplays{1pt}

 \item[(a)] Set $G_M^*(x,y) := G_M^T(y,x)$, then $G_M^*$ is the Green's kernel for the formal adjoint to $\mathcal{L}$
 in domain $D_M$,  i.e. a divergence type operator with coefficient matrix equal to $A^{\beta \alpha}$.

\vspace{0.1cm}
 
 \item[(b)] For any multi-index $\alpha \in \Z^d_+$ we have
 \begin{equation}\label{sDom-Green-deriv}
  |\nabla^\alpha_x G_M (x,y)| \leq C_\alpha |x-y|^{2-d -|m|} \text{ for } d+|m|>2,
 \end{equation}
 where $C_\alpha $ is independent of $M$.
\end{itemize}

\vspace{0.1cm}

The existence and uniqueness of Green's kernel, as well as property (a) and 
estimate of (b) for each fixed $M$ are proved
in \cite{Dolz-Muller}, which is one of the several papers discussing
some basic questions around the Green's kernels for divergence type elliptic operators.
The only thing that needs some clarification is the choice of the constant in (b)
independently of $M$. To see it, we proceed as follows.  

Assume $G$ is the Green's kernel for the pair ($A, D$) and let $M\in \OO(d)$ be fixed.
Make a change a variables by setting
$Mx = \widetilde{x}$ and $My = \widetilde{y}$ where $x,y\in D$ and $\widetilde{x}, \widetilde{y} \in D_M$.
By orthogonality of $M$ we have $\nabla_x  = M^T \nabla_{\widetilde{x}}$ which
combined with the uniqueness of the Green's kernel, easily implies that the function 
$$
\widetilde{G} (\widetilde{x}, \widetilde{y}):= G(M^T \widetilde{x}, M^T \widetilde{y})
$$
is the Green's kernel for the pair $( M A(M^T \cdot ) M^T , D_M )$.
This shows that if $\widetilde{G}_M$ is the Green's kernel for the pair $(M^T A(M \cdot )M, D)$,
then 
\begin{equation}\label{rel-between-greens}
G_M (\widetilde{x}, \widetilde{y}  ) := \widetilde{G}_M (M^T \widetilde{x}, M^T \widetilde{y} ) 
\end{equation}
produces the Green's kernel for the pair $(A, D_M)$.
In view of the orthogonality of $M$ the ellipticity constants as well as bounds on $C^k$
norms, for any $k\geq 0$, of the coefficient matrix $M^T A(M \cdot) M$ can be chosen independently of $M$,
hence (\ref{rel-between-greens}) and the proof of \cite{Dolz-Muller} illustrate
uniformity of the constants in (b) with respect to $M$.

Now for $x \in D_M$ and $y \in \partial D_M$ we let $P_M(x,y)$ be the Poisson kernel
for the pair $(A, D_M)$. One may easily conclude using the divergence theorem that 
\begin{equation}\label{Poisson-rep}
P_M(x,y) = - n^T(y) A^T(y) \nabla_y G_M(x,y), \ \  \text{ where } x \in D_M, \ y \in \partial D_M,
\end{equation}
where $n(y)$ is the unit outward normal to $\partial D_M$ at the point $y$.

\begin{lem}\label{Lem-Poisson-dist-smooth}
Let the domain $D$ be as above and $M\in \OO(d)$ be any.
Keeping the above notation and assumptions in force we have
\begin{equation}\label{dist-Poisson-smooth}
|P_M(x,y)| \leq C \frac{d(x)}{|x-y|^d}, \qquad \forall x\in D_M, \ \forall y\in \partial D_M,
\end{equation}
where $d(x)$ is the distance of $x$ from the boundary of $D$, and
\begin{equation}\label{deriv-Poisson-smooth}
 |\nabla_y P_M(x,y)| \leq C |x-y|^{-d} , \qquad \forall x\in D_M, \ \forall y\in\partial D_M.
\end{equation}
with constants independent of $M$.
\end{lem}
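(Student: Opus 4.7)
The plan is to derive both estimates from the Poisson--Green relation (\ref{Poisson-rep}),
\[
P_M(x,y) = - n(y)^T A^T(y)\, \nabla_y G_M(x,y),
\]
combined with sharp boundary bounds on $G_M$ and $\nabla^2_y G_M$ that are uniform in $M\in \OO(d)$. The $M$-uniformity will be enforced throughout by the change of variables (\ref{rel-between-greens}), which reduces every Green's function estimate on $D_M$ to the corresponding estimate on the fixed domain $D$ for the coefficient matrix $M^T A(M\cdot )M$; the ellipticity bounds and all $C^k$-norms of the rotated coefficients are controlled independently of $M$ by orthogonality of $M$, so the constants in the classical barrier, Harnack and Schauder estimates we will invoke are $M$-independent.

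First I would establish the two-sided boundary decay
\[
|G_M(x,y)| \leq C\, \frac{d_M(x)\, d_M(y)}{|x-y|^d}, \qquad x,y \in D_M,\ x\neq y,
\]
with $d_M(\cdot) = \operatorname{dist}(\cdot,\partial D_M)$. This is a standard barrier comparison, carried out on a half-annulus $B(x_0, r) \cap D_M$ centered at a boundary projection $x_0$ of $x$, with $r \asymp |x-y|$: one compares $G_M(\cdot,y)$ to the supersolution built by combining the linear boundary barrier $d_M(\cdot)/r$ with the interior size $|x-y|^{2-d}$ coming from (\ref{sDom-Green-deriv}). This yields the one-sided bound $|G_M(x,y)| \leq C\, d_M(x) / |x-y|^{d-1}$; iterating the same argument in the $y$-variable through the identity $G_M(x,y) = G_M^*(y,x)$ and property (a) produces the full two-sided estimate.

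With that in hand, bound (\ref{dist-Poisson-smooth}) is immediate. For $y_0 \in \partial D_M$ with inward unit normal $n_{\mathrm{in}}$, the tangential components of $\nabla_y G_M(x,y_0)$ vanish because $G_M(x,\cdot) \equiv 0$ on the smooth $\partial D_M$, so
\[
|\nabla_y G_M(x,y_0)| = \lim_{t \to 0^+}\frac{|G_M(x, y_0 + t n_{\mathrm{in}})|}{t} \leq C\, \frac{d_M(x)}{|x-y_0|^d}
\]
directly from the two-sided estimate, and multiplication by $n(y_0)^T A^T(y_0)$ preserves the inequality by the uniform boundedness of $A$. For bound (\ref{deriv-Poisson-smooth}), I would fix $y_0 \in \partial D_M$, set $r = |x-y_0|/4$, and note that on the half-ball $B(y_0, 2r) \cap D_M$ the function $G_M(x,\cdot)$ is a classical solution of $\mathcal{L}^* u = 0$ with zero Dirichlet data on the smooth portion $\partial D_M \cap B(y_0, 2r)$. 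Boundary Schauder estimates with $C^\infty$ coefficients and $C^\infty$ boundary, after rescaling to unit radius, give
\[
\|G_M(x,\cdot)\|_{C^2(\overline{B(y_0, r)\cap D_M})} \leq \frac{C}{r^2} \sup_{B(y_0, 2r) \cap D_M} |G_M(x,\cdot)| \leq \frac{C}{r^2}\, r^{2-d} = C\, r^{-d},
\]
the interior size $r^{2-d}$ being supplied by (\ref{sDom-Green-deriv}) since $|x-\cdot| \geq r$ there. Differentiating (\ref{Poisson-rep}) tangentially on $\partial D_M$ and plugging in this pointwise bound on $\nabla_y^2 G_M(x,y_0)$, together with the smoothness of $n$ and $A$, yields (\ref{deriv-Poisson-smooth}).

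The main obstacle is the $M$-uniformity of every constant that appears. This is handled by consistently invoking (\ref{rel-between-greens}) to move each estimate onto the fixed reference domain $D$, where the only ingredients are dimension $d$, the fixed smoothness of $\partial D$, and the $M$-independent norms of the rotated coefficients $M^T A(M\cdot)M$. Once this translation is in place, each individual estimate—the barrier/two-sided Green's function bound, the limit computation for $\partial_n G_M$, and the scaled boundary Schauder bound—is classical.
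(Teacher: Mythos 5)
Your argument is sound for $d\geq 3$ and the uniformity mechanism via (\ref{rel-between-greens}) is exactly the one the paper uses, but your route through the Green's function estimates is genuinely different. The paper obtains $|G_M(x,y)|\leq C\,d(x)|x-y|^{1-d}$ by the Mean--Value Theorem combined with the first-order case of (\ref{sDom-Green-deriv}) (quoted from \cite{Dolz-Muller}, so no barrier needs to be built), and then upgrades this to $|\nabla_y G_M(x,y)|\leq C\,d(x)|x-y|^{-d}$ by a rescaled boundary gradient estimate on the annulus $r^{-1}(D_M-x_0)\cap(B(0,3)\setminus B(0,1/3))$; you instead push through to the two-sided bound $d(x)d(y)/|x-y|^{d}$ and recover the normal derivative of $G_M(x,\cdot)$ as a difference quotient. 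Both are standard; the paper's version avoids having to justify that the linear boundary barrier can be completed to a supersolution of a variable-coefficient divergence-form operator, while yours yields the (stronger, though here unused) two-sided Green's function estimate. Your derivation of (\ref{deriv-Poisson-smooth}) by rescaled boundary Schauder estimates is a legitimate substitute for the paper's direct appeal to the second-order case of (\ref{sDom-Green-deriv}) together with the symmetry property (a).

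The one genuine gap is the case $d=2$, which is within the scope of the lemma (the paper assumes $d\geq 2$ throughout). Every step of your proof leans on the zeroth-order bound $\sup|G_M(x,\cdot)|\lesssim |x-y|^{2-d}$ --- as the ``interior size'' in the barrier comparison and as the right-hand side of the rescaled Schauder estimate $\|G_M(x,\cdot)\|_{C^2}\leq Cr^{-2}\cdot r^{2-d}$ --- but (\ref{sDom-Green-deriv}) only provides this for $d+|m|>2$, and in two dimensions the Green's function is logarithmically unbounded near the pole, so both the comparison function and the bound $r^{2-d}=1$ are unavailable as stated. This is precisely why the paper restricts its argument to $d\geq 3$ and disposes of the two-dimensional case separately by citing \cite{AL-systems}. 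Your proof needs an analogous separate treatment for $d=2$ (for instance, replacing the interior size by $1+|\log|x-y||$ in the barrier, or invoking the known two-dimensional Green's function bounds of \cite{Dong-Kim}); as written, the claimed chain of estimates does not close in that dimension.
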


\begin{proof}
Let us first observe that the estimates on derivatives of $P$ follow trivially from
the representation (\ref{Poisson-rep}), estimate 
(\ref{sDom-Green-deriv}), the symmetry property of Green's matrix (a) formulated above,
combined with smoothness of the coefficients $A$ and domain $D_M$. 

We now proceed to the proof of the estimate with distance,
for which we will rely on uniformity of constants in (\ref{sDom-Green-deriv})
with respect to $M$.
First consider the case 
of $d\geq 3$. We now show that for $x,y\in D$ with $x\neq y$ one has
\begin{equation}\label{sDom-Green-dist1}
| G_M(x,y) | \leq C \frac{d(x)}{ |x-y|^{d-1} }.
\end{equation}
By (\ref{sDom-Green-deriv}) we have $|G_M(x,y)|\leq C |x-y|^{2-d}$, hence (\ref{sDom-Green-dist1})
is trivial if $d(x)>\frac 13 |x-y|$, thus we will assume that $d(x)\leq \frac 13 |x-y|$.
Now fix $\overline{ x } \in \partial D$ such that $d(x) = |x-\overline{x}|$.
Since $G_M$ vanishes on the boundary of $D_M$ with respect to both variables, using Mean-Value Theorem
and estimate (\ref{sDom-Green-deriv}) for $G_M^*$ we get
\begin{equation}\label{Z1}
|G_M(x,y)| = |G_M(x,y) - G_M( \overline{x}, y) | \leq | \nabla_x G_M( \widetilde{x}, y) |   | x-\overline{x}| \leq C \frac{d(x)}{| \widetilde{x} -y |^{d-1} }.
\end{equation}
Here, for estimating the derivative of $G_M$ we have used symmetry relation (a) above.
As $d(x) \leq \frac 13 |x-y|$, and $\widetilde{x}$ is on the segment $[x, \overline{x}]$,
the triangle inequality implies
$$
| \widetilde{x} - y | \geq |x-y|-|x- \widetilde{x} | \geq |x-y| - |x - \overline{x}| \geq  \frac 23 |x-y|,
$$
which combined with (\ref{Z1}) gives (\ref{sDom-Green-dist1}). 

Now fix $x_0,y_0$ and let $r:=|x_0-y_0|>0$.
Consider $\widetilde{G}_M(z): = G_M(x_0, rz + x_0 )$ in a scaled and shifted domain
$D_{r,M}:= r^{-1}(D_M-x_0)$. We have that $\widetilde{G}_M$
is a solution to the adjoint operator in $D_{r,M} \cap (B(0, 3)\setminus B( 0, 1/3))$, hence in view of the smoothness
of the coefficients, from standard elliptic regularity
estimates we obtain
$$
|\nabla_z \widetilde{G}_M(z)| = | r \nabla_y G_M (x_0, rz + x_0 ) | \leq C ||  \widetilde{G}_M ||_{L^\infty( D_{r,M} \cap( B(0,3)\setminus B(0,1/3) ))},
$$
for all $z\in D_{r,M} \cap (B(0,2)\setminus B(0,1/2)) $.
Note that the regularity estimates we have used are uniform in $M$
due to (\ref{rel-between-greens}) and orthogonality of $M$. The last inequality combined with (\ref{sDom-Green-dist1}) implies
\begin{equation}\label{Z2}
| \nabla_y G_M(x_0,y_0)| \leq C \frac{d(x_0)}{| x_0 -y_0 |^d}.
\end{equation}
Now the estimate (\ref{dist-Poisson-smooth}) follows from the last inequality and
representation (\ref{Poisson-rep}).

It is left to consider the 2-dimensional case, which can be handled precisely as in \cite[Lemma 21]{AL-systems}
thus we will omit the details.

The lemma is proved.
\end{proof}

\noindent {\footnotesize \textbf{Acknowledgements.}
I thank Christophe Prange for an important conversation
back in Fall of 2014 which has eventually led to the problem considered in Section \ref{sec-Laplace}.
Part of the work was done
during my visit to Institut Mittag-Leffler for the term
``Homogenization and Random Phenomenon". The warm hospitality and support 
of The Institute is acknowledged with gratitude.
This article is partially based on my PhD thesis completed at The University of Edinburgh in 2015,
and I wish to thank my thesis supervisor Aram Karakhanyan for
his encouragement, as well as advice.
I am also grateful to the anonymous referees for thorough reading of the manuscript
and providing valuable suggestions, as well as corrections which have certainly helped to
improve the quality of the presentation.
}

\end{document}